\documentclass{amsart}
\usepackage{array}
\usepackage{latexsym}
\usepackage[all]{xy}
\usepackage{mathrsfs}
\usepackage{graphics}
\usepackage{tikz,tikz-cd}
\usepackage{float}
\usepackage{enumitem}
\usepackage[colorlinks]{hyperref}

\newtheorem{theorem}{Theorem}[section]
\newtheorem{lemma}[theorem]{Lemma}

\theoremstyle{definition}

\newtheorem{example}[theorem]{Example}
\newtheorem{remark}[theorem]{Remark}

\theoremstyle{remark}

\newcommand{\tabincell}[2]{\begin{tabular}{@{}#1@{}}#2\end{tabular}} 

\begin{document}
 
	\title{The minimal and next minimal volumes of normal KSBA stable surfaces with $p_g\ge 2$         
	}
	\author{Jingshan \uppercase{Chen}}             
	\address{ school of mathematical science, USTC\\
		E-mail:\,$chjingsh@ustc.edu.cn$ }
	\keywords{KSBA stable surfaces, Noether inequality, minimal volume}        
	
	\subjclass{14J10, 14J29} 
	
	\maketitle

	\begin{abstract}
		In this paper we investigate the minimal and the next minimal volumes of normal KSBA stable surfaces with $p_g\ge 2$.
		We show that in case of  $|K_X|$ not composed with a pencil, the minimal and next minimal volumes are $2p_g-4$ and $2p_g-4+\frac{1}{3}$. In case of $|K_X|$  composed with a pencil, the minimal and next minimal volumes are $\frac{p_g-1}{p_g+1}(p_g-1)$ and  $\mathrm{min}\{\frac{2p_g-2}{2p_g+1}(p_g-1), \frac{(3p_g-2)p_g-4}{3(p_g+2)}\}$. We also characterize the surfaces achieving the minimal volumes.
	\end{abstract}

	\section{Introduction}
	Stable curves and stable pointed curves are by now fundamental objects in the study of compactification of moduli spaces of smooth curves of genus $g$. KSBA stable surfaces and stable log surfaces are the two-dimensional analogues (see  \cite{alexeev96a} \cite{alexeev06} \cite{kollar12} \cite{SMMP} \cite{kollarModuli} \cite{KSB88} ). It is known that the moduli space $\mathcal{M}_{a,b}$ of canonical models of smooth surfaces of general type with $a=K_X^2$ and $b=\chi(\mathcal{O}_X)$ has a KSBA compactification $\overline{\mathcal{M}}_{a,b}$, which parametrizing KSBA stable surfaces. 
	
	However, in general, a KSBA stable surface admits slc (semi-log-canonical) singularities, and its canonical divisor $K_X$ is only $\mathbb{Q}$-Cartier. Hence, the geometry of KSBA stable surfaces is quite different with that of smooth surfaces of general type. For example, a KSBA stable surface may be non-normal, and its volume $K_X^2$ is only a rational number. 
	It is well known that $2\chi(\mathcal{O}_X)-6\le K_X^2\le 9\chi(\mathcal{O}_X)$ for a smooth minimal surface of general type. A natural  question comes to mind:  what restrictions does 
	the volume $K_X^2$ of a KSBA stable surface have?
	This is a difficult question. First, it is easy to see $K_X^2>0$ since $K_X$ is required to be ample. 
	By a deep result of Alexeev \cite{alexeev94}, the volume set $$\mathcal{V}\colon=\{K_X^2\mid X \text{ is a KSBA stable surface}\}$$ is a DCC (descending chain condition) set. In particular, $\mathcal{V}$ has a minimal value $v_{min}$. The smallest known volume is $1/48983$, by an example of Alexeeve and Liu \cite[Theorem 1.4]{AL19}.

	
	In this paper, we consider the following subsets of $\mathcal{V}$:
	$$\mathcal{W}_{n}\colon=\{K_X^2\mid X \text{ is a normal KSBA stable surface with } p_g=n+1\ge 2\},$$  
	
	\begin{equation*}
		\mathcal{W}_{n}^{\circ}\colon= \left\{K_X^2\, \bigg| \,
		\begin{aligned}
			X \text{ is a }&\text{normal KSBA stable surface with } p_g=n+1\ge 2,\\
			&\text{\,and\,} |K_X| \text{\, not composed with a pencil}
		\end{aligned}\right\}.
	\end{equation*}
	
	Since $\mathcal{W}_{n}$ (resp. $\mathcal{W}_{n}^{\circ}$) is also a DCC set, it has a minimal value $w_{n,1}$ (resp. $w_{n,1}^{\circ}$)  and a next minimal value $w_{n,2}$ (resp. $w_{n,2}^{\circ}$). The first gap of $\mathcal{W}_{n}$ (resp. $\mathcal{W}_{n}^{\circ}$) is defined as $g_{n,1}\colon w_{n,2}-w_{n,1}$  (resp. $g_{n,1}^{\circ}\colon w_{n,2}^{\circ}-w_{n,1}^{\circ}$). There is natural question whether there is a uniform first gap $g_1$ (resp. $g_{n,1}^{\circ}$) for $\mathcal{W}_{n}$  (resp. $\mathcal{W}_{n}^{\circ}$) , i.e. $g_{n,1}=g_1$, $\forall n\ge 1$ (resp. $g_{n,1}^{\circ}=g_1^{\circ}$, $\forall n\ge 1$).
	
	The main theorem of this paper is as follows.
	
	\begin{theorem}[\textbf{Main theorem}]
		Let $X$ be a normal KSBA stable surface. 
		
		If $|K_X|$ is not composed with a pencil, then  $K_X^2\ge  2p_g-4$. If $K_X^2> 2p_g-4$, we have $K_X^2\ge  2p_g-4+\frac{1}{3}$. Moreover, the inequalities are sharp.
		
		If  $|K_X|$ is composed with a pencil, then either $K_X^2\ge 2p_g-2$, or $X$ is one of the following:
		\begin{itemize}
			
			\item[(A)]  $X$ is birational to a surface $\widetilde{X}$ which admits a genus 2 fibration over $\mathbb{P}^1$ with a section. Moreover, $p_g=2$ and $K_X^2\ge 1$. 
			If '$>$' holds, $K_X^2\ge 1+\frac{1}{3}$. The inequalities are sharp; 
			\item[(B)] $X$ is birational to a Jacobian surface over an elliptic curve. Moreover,  $K_X^2\ge p_g$. 
			If '$>$' holds, $K_X^2\ge p_g+\frac{1}{3}$. The inequalities are sharp;
			\item[(C)] $X$ is birational to an elliptic surface over $\mathbb{P}^1$. Moreover, $K_X^2\ge \frac{p_g-1}{p_g+1}(p_g-1)$. 
			If '$>$' holds, 
			$K_X^2 \ge \mathrm{min}\{\frac{2p_g-2}{2p_g+1}(p_g-1), \frac{(3p_g-2)p_g-4}{3(p_g+2)}\}$. The inequalities are sharp.
			
		\end{itemize}
	\end{theorem}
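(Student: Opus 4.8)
The plan is to run a Noether-type argument on a resolution while carefully tracking the fractional corrections forced by the log canonical singularities. First I would fix a log resolution $\pi\colon \widetilde{X}\to X$ and write $K_{\widetilde{X}}=\pi^{*}K_X+\sum_i a_iE_i$ with all discrepancies $a_i\ge -1$ (log canonicity), chosen so that the base locus of $|K_X|$ is resolved. Decomposing $|\pi^{*}K_X|=|M|+Z$ into its moving part $M$ (nef) and fixed part $Z\ge 0$, I obtain $h^0(\widetilde X,M)=p_g$ and a morphism $\phi\colon\widetilde X\to\Sigma\subseteq\mathbb P^{p_g-1}$. The image $\Sigma$ is a surface exactly when $|K_X|$ is not composed with a pencil and a curve otherwise; this dichotomy, together with the elementary inequality $K_X^2=(\pi^{*}K_X)^2\ge(\pi^{*}K_X)\cdot M\ge M^2$, organizes the entire argument.

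In the non-composed case a general $C\in|M|$ is irreducible, and the restriction sequence $0\to\mathcal O_{\widetilde X}\to\mathcal O_{\widetilde X}(M)\to\mathcal O_C(M|_C)\to 0$ gives $h^0(C,M|_C)\ge p_g-1$. Since $\deg(M|_C)=M^2$ and $C$ has positive genus, Clifford's theorem (in the special case) yields $p_g-1\le\tfrac12 M^2+1$, hence $M^2\ge 2p_g-4$ and $K_X^2\ge 2p_g-4$; the non-special case is checked to be no worse. For the first gap I would analyze equality: $K_X^2=2p_g-4$ forces $Z=0$, $\deg\phi=1$, $\Sigma$ of minimal degree $p_g-2$, and $M|_C$ meeting Clifford exactly, so that $K_X$ is forced to be Cartier with a prescribed surface as canonical image. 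The next available value then differs by the smallest fractional contribution that a strictly effective $Z$ or a nontrivial lc singularity can add to $(\pi^{*}K_X)^2$, which I expect to compute as exactly $\tfrac13$.

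In the composed case I would pass to the fibration $f\colon\widetilde X\to B$ given by the Stein factorization of $\phi$, so $M\equiv aF$ for a general fibre $F$ with $F^2=0$; counting sections of the pushforward line bundle on $B$ gives $a\ge p_g-1$ when $B=\mathbb P^1$, with $a\ge p_g$ already when $g(B)\ge1$. Because $M^2=0$, the volume is now governed by $\pi^{*}K_X\cdot F$ and, crucially, by the discrepancy and boundary corrections along the singular and exceptional loci rather than by the crude adjunction value $2g(F)-2$. The strategy is to show that $K_X^2\ge 2p_g-2$ holds unless the fibration has small fibre genus and small base, and then to isolate the remaining configurations as (A) a genus $2$ fibration over $\mathbb P^1$ (with a section), (B) an elliptic fibration over an elliptic base, and (C) an elliptic fibration over $\mathbb P^1$. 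For each I would run a sharper estimate: the slope inequality $K_{\widetilde X/B}^2\ge\frac{4g(F)-4}{g(F)}\deg f_*\omega_{\widetilde X/B}$ for the genus $2$ case, and the canonical bundle formula $K_{\widetilde X}\sim_{\mathbb Q}f^{*}(K_B+L)+\sum\frac{m_j-1}{m_j}F_j$ for the elliptic cases, combined with the relation between $\deg f_*\omega$, $p_g$ and $g(B)$, to read off the sharp coefficients $\tfrac{p_g-1}{p_g+1}(p_g-1)$ and the minima displayed in (C).

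The hard part will be the systematic bookkeeping of the $\mathbb Q$-Cartier corrections. Since $K_X$ is not Cartier, every adjunction, every Clifford estimate and every fibre computation carries a ``different'' supported on the singular locus and on $Z$, and it is precisely the minimal nonzero sizes of these terms --- governed by the types of lc (typically cyclic quotient) singularities and the multiplicities $m_j$ of multiple fibres --- that produce the fractional jump $+\tfrac13$ and the exact denominators $p_g+1$, $2p_g+1$ and $3(p_g+2)$ in (C). Establishing sharpness is equally delicate: for each extremal value I would construct an explicit normal KSBA stable surface, namely a fibred surface carrying a precisely chosen configuration of lc singularities (or a suitable weighted contraction of a minimal elliptic or genus $2$ surface), and verify both ampleness of $K_X$ and the claimed volume by direct intersection-theoretic computation.
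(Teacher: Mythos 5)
Your skeleton (resolve, split $|K_X|$ into moving and fixed parts, dichotomize on the canonical image, then track discrepancies) matches the paper's, and for the first inequality $K_X^2\ge 2p_g-4$ your Clifford-theorem route is a legitimate classical alternative: the paper instead splits into the birational case (a Horikawa-style Castelnuovo argument giving $K_X^2\ge 3p_g-7$) and the non-birational case (where $\deg\phi\cdot\deg W\ge 2(p_g-2)$ follows from the minimal-degree bound). Note, though, that equality forces $\deg\phi_{|K_X|}=2$ onto a surface of minimal degree, not $\deg\phi=1$ as you wrote; the birational case is excluded precisely by the $3p_g-7$ bound.

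The genuine gaps are in the parts that carry the actual content of the theorem. First, the $+\tfrac13$ jump is asserted ("which I expect to compute as exactly $\tfrac13$") rather than derived. The mechanism is concrete: when $2p_g-4<K_X^2<2p_g-3$ one shows the correction term $-\sum a_E E\cdot L$ cannot vanish (this requires a nontrivial contractibility argument, not just positivity), and then any exceptional curve $E'$ with $E'\cdot L>0$ satisfies $E'^2\le -3$, whence $-a_{E'}\ge\frac{-2-E'^2}{-E'^2}\ge\frac13$ with equality exactly for an isolated $(-3)$-curve, i.e. a $\frac13(1,1)$-singularity. Nothing in your write-up produces this. Second, in the pencil case your two named tools are miscalibrated: the slope inequality and the canonical bundle formula control $K_{\widetilde X}^2$ and $K_{\widetilde X}$ on the smooth model, but the volume is $(\pi^*K_X)^2$, and the bounds in (A), (B), (C) come from the discrepancies of the \emph{contracted} curves, above all the section of the fibration. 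In case (C) the decisive point is that the zero section $E_0$ is horizontal with $-E_0^2\ge n+2$ (where $K_{\widetilde X}\equiv nF+G$, $n=p_g-1$), so $-a_{E_0}\ge\frac{n}{n+2}$ and $K_X^2\ge n\cdot\frac{n}{n+2}=\frac{(p_g-1)^2}{p_g+1}$; your canonical bundle formula never isolates this curve. Third, the next-minimal value in (C), $\min\{\frac{2p_g-2}{2p_g+1}(p_g-1),\frac{(3p_g-2)p_g-4}{3(p_g+2)}\}$, is obtained in the paper only after a long optimization over the possible chain configurations attached to $E_0$ (using Alexeev's lemma on fractional indices and minimizing the functions $V_{n,\ell}$, $W_{n,\ell}$ over $\ell$ and over the number of branches $k$); your proposal gives no route to these specific denominators, and "read off the sharp coefficients" does not substitute for that case analysis. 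The sharpness examples you promise are constructible, but they too depend on first knowing the extremal configurations, which is exactly the missing work.
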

	
	Therefore, for the volume set $\mathcal{W}_{n}^{\circ}$ of normal KSBA stable surfaces with $|K_X|$ not composed with pencil, we have $w_{n,1}^{\circ}=2(n-1)$, $w_{n,2}^{\circ}= 2(n-1)+\frac{1}{3}$, and the first gap $g_{n,1}^{\circ}=\frac{1}{3}$ is uniform. For the volume set  $\mathcal{W}_{n}$ of normal KSBA stable surfaces, we have
	$w_{n,1}=\frac{n^2}{n+2}$, $w_{n,2}= \mathrm{min}\{\frac{2n^2}{2n+3}, \frac{3n^2+4n-3}{3(n+3)}\}$, and the first gap $g_{n,1}=\mathrm{min}\{\frac{n^{2}}{\left(n + 2\right) \left(2 n + 3\right)}, \frac{\left(n - 1\right) \left(n + 6\right)}{3 \left(n + 2\right) \left(n + 3\right)}\}$ is not uniform.
	
	The notion of KSBA stable surfaces naturally extend to the log case. 
	In the paper \cite{Liu22},  Wenfei Liu studied the minimal volume of KSBA stable log surfaces $(X,\Delta)$ of general type with $p_g(X,\Delta)\ge 1$. 
	
	The paper is organized as follows. In \textsection 2, we review some definitions and facts about KSBA stable surfaces. In \textsection 3, we consider the case where $|K_X|$ is not composed with a pencil. In \textsection 4, we classify normal KSBA stable surfaces with $|K_X|$  not composed with a pencil and $K_X^2=2p_g-4$. In \textsection 5, we consider normal KSBA stable surfaces with $|K_X|$ not composed with a pencil and $2p_g-4<K_X^2<2p_g-3$. In \textsection 6, we consider normal KSBA stable surfaces with $|K_X|$ composed with a pencil.

	\section{Preliminaries}\label{prel}
	
	\subsection{lc singularities}\label{clssoflc}
	A normal algebraic surface $X$ is called an KSBA stable surface if it admits only lc (log canonical) singularities and $K_X$ is a $\mathbb{Q}$-Cartier ample divisor. 
	
	We include here the classification of lc surface singularities from \cite[3.39]{SMMP}. 
	Let $s\in S$ be a lc germ with $\Gamma$ its dual graph. If $s$ is lt (log terminal), then $\Gamma$ is one of the following:
	
	\begin{itemize}[itemsep= 15 pt,topsep = 20 pt]
		\item (Cyclic quotient) $\Gamma$ is a string of rational curves as below:
		
		\begin{tikzpicture} 
			\begin{scope}[scale=1] 
				\clip(-1,-0.5) rectangle (8,0.5);
				
				\draw (0,0)--(1,0);
				\draw[dashed] (1,0)--(3,0);
				\draw (3,0)--(4,0);
				\node[circle, fill=white] at (0,0)  {$c_1$};
				\node[circle, fill=white] at (1,0)  {$c_2$};
				\node[circle, fill=white] at (3,0)  {$c_{n-1}$};
				\node[circle, fill=white] at (4,0)  {$c_n$};
				\draw node at (6, 0) {{}};
			\end{scope}
		\end{tikzpicture} 
		\item (Dihedral quotient) $\Gamma$ has a fork whose two branches are (-2)-curves; 
		
		\begin{tikzpicture} 
			\begin{scope}[ scale=1 ] 
				\clip(-1,-1.5) rectangle (8,1.5);
				\draw  (0,0.5)--(1,0);
				\draw  (0,-0.5)--(1,0);
				\draw (1,0)--(2,0);
				\draw[dashed] (2,0)--(4,0);
				\draw (4,0)--(5,0);
				\node[circle, fill=white] at (0,0.5)  {$2$};
				\node[circle, fill=white] at (0,-0.5)  {$2$};
				\node[circle, fill=white] at (1,0)  {$c_n$};
				\node[circle, fill=white] at (2,0)  {$c_{n-1}$};
				\node[circle, fill=white] at (4,0)  {$c_2$};
				\node[circle, fill=white] at (5,0)  {$c_1$};
			\end{scope}
		\end{tikzpicture} 
		
		\item (Other quotients) $\Gamma$ has a fork with each branch $\Gamma_i$ a string of rational curves.  
		
		\begin{tikzpicture}
			\begin{scope}[scale=1 ] 
				\clip(-1,-1) rectangle (4,2);
				\draw  (1,0)-- (1,1);
				\draw  (0,0)-- (2,0);
				\node[circle, fill=white] at (1,1)  {$\Gamma_1$};
				\node[circle, fill=white] at (0,0)  {$\Gamma_2$};
				\node[circle, fill=white] at (2,0)  {$\Gamma_3$};
				\node[circle, fill=white] at (1,0)  {$c_0$};
				
			\end{scope}
		\end{tikzpicture}
		
		There are three possibilities for $(\mathrm{det}(\Gamma_1),\mathrm{det}(\Gamma_2),\mathrm{det}(\Gamma_3))$:
		\begin{itemize}[leftmargin=90 pt]
			\item[(Tetrahedral)] (2,3,3);
			\item[(Octahedral)] (2,3,4);
			\item[(Icosahedral)] (2,3,5).
		\end{itemize}
	\end{itemize}

	If $s$ is not lt, then $\Gamma$ is one of the following:
	\begin{itemize}[itemsep= 15 pt,topsep = 20 pt]
		\item (Simple elliptic) $\Gamma=\{E\}$ has a single vertex which is a smooth elliptic curve with self-intersection $\le 1$;
		\item (Cusp) $\Gamma$ is a cycle of rational curve, $c_i\ge 2$ and there exists $c_i\ge 3$;
		
		\begin{tikzpicture} 
			\begin{scope}[ scale=1 ] 
				\clip(-1,-1.5) rectangle (9,1.5);
				\draw  (0,0)--(1,1);
				\draw  (0,0)--(1,-1);
				\draw  (1,1)--(2,1);
				\draw  (3,1)--(4,1);
				\draw  (1,-1)--(2,-1);
				\draw  (3,-1)--(4,-1);
				
				\draw  (5,0)--(4,1);
				\draw  (5,0)--(4,-1);
				
				\draw[dashed] (2,1)--(3, 1);
				\draw[dashed] (2,-1)--(3,-1);
				\node[circle, fill=white] at (0,0)  {$c_1$};
				\node[circle, fill=white] at (1,-1)  {$c_2$};
				\node[circle, fill=white] at (4,-1)  {$c_{r-1}$};
				\node[circle, fill=white] at (4,1)  {$c_{r+1}$};
				\node[circle, fill=white] at (5,0)  {$c_r$};
				\node[circle, fill=white] at (1,1)  {$c_n$};

			\end{scope}
		\end{tikzpicture} 
		
		\item ($\mathbb{Z}/2$-quotient of a cusp or simple elliptic) $\Gamma$ has two forks with each branch a single (-2)-curve and $c_i\ge 2$;
		
		\begin{tikzpicture} 
			\begin{scope}[ scale=1 ] 
				\clip(-1,-1.5) rectangle (9,1.5);
				\draw  (0,0.5)--(1,0);
				\draw  (0,-0.5)--(1,0);
				\draw  (6,0.5)--(5,0);
				\draw  (6,-0.5)--(5,0);
				
				\draw (1,0)--(2,0);
				\draw[dashed] (2,0)--(4,0);
				\draw (4,0)--(5,0);
				\node[circle, fill=white] at (0,0.5)  {$2$};
				\node[circle, fill=white] at (0,-0.5)  {$2$};
				\node[circle, fill=white] at (6,0.5)  {$2$};
				\node[circle, fill=white] at (6,-0.5)  {$2$};
				\node[circle, fill=white] at (1,0)  {$c_1$};
				\node[circle, fill=white] at (2,0)  {$c_2$};
				\node[circle, fill=white] at (4,0)  {$c_{n-1}$};
				\node[circle, fill=white] at (5,0)  {$c_n$};

			\end{scope}
		\end{tikzpicture} 
		
		\item (Other quotients of a simple elliptic) $\Gamma$ has a fork with each branch $\Gamma_i$ a string of rational curves.  
		
		\begin{tikzpicture}
			\begin{scope}[scale=1 ] 
				\clip(-1,-1) rectangle (4,2);
				\draw  (1,0)-- (1,1);
				\draw  (0,0)-- (2,0);
				\node[circle, fill=white] at (1,1)  {$\Gamma_1$};
				\node[circle, fill=white] at (0,0)  {$\Gamma_2$};
				\node[circle, fill=white] at (2,0)  {$\Gamma_3$};
				\node[circle, fill=white] at (1,0)  {$c_0$};	
			\end{scope}
		\end{tikzpicture} 
		
		There are three possibilities for $(\mathrm{det}(\Gamma_1),\mathrm{det}(\Gamma_2),\mathrm{det}(\Gamma_3))$:
		\begin{itemize}[leftmargin=90pt]
			\item[($\mathbb{Z}/3$-quotient)] (3,3,3);
			\item[($\mathbb{Z}/4$-quotient)] (2,4,4);
			\item[($\mathbb{Z}/6$-quotient)] (2,3,6).
		\end{itemize}

	\end{itemize}
	
	The cases $\mathrm{det}(\Gamma_i)\in \{2,3,4,5,6\}$ gives the possibilities:
	\begin{align*}
		\mathrm{det}(\Gamma_i)=2 \quad  &\Leftrightarrow\quad \Gamma_i \quad\text{is }\  2;\\
		\mathrm{det}(\Gamma_i)=3\quad   &\Leftrightarrow\quad \Gamma_i \quad\text{is }\  3 \quad\text{or}\quad 2 - 2;\\
		\mathrm{det}(\Gamma_i)=4\quad   &\Leftrightarrow\quad \Gamma_i \quad\text{is }\  4 \quad\text{or}\quad 2 - 2 - 2;\\
		\mathrm{det}(\Gamma_i)=5\quad   &\Leftrightarrow\quad \Gamma_i \quad\text{is }\  5\quad\text{or}\quad 2 - 2 - 2 -2 \quad\text{or}\quad 2 - 3;\\
		\mathrm{det}(\Gamma_i)=6\quad   &\Leftrightarrow\quad \Gamma_i \quad\text{is }\  6 \quad\text{or}\quad 2 - 2 - 2 - 2 - 2.\\
	\end{align*}
	
	\begin{remark}
		Note that a lc singularity is a canonical singularity if and only if its exceptional curves  consists of (-2)-curves. 
		This can be shown by taking a minimal resolution $\pi\colon\widetilde{X}\to X$ with $\pi^*K_S=K_{\widetilde{X}}-\sum a_E E$. If each $E$ is a (-2)-curve, we have $0=-\sum a_E E\pi^*K_S=(-\sum a_E E)^2$. Hence $-\sum a_E E=0$, which means the singularity is a canonical singularity. 
	\end{remark}
	\begin{remark}\label{discrep}
		Let $\pi:\tilde{X}\to X$ be a resolution of a lc singularitie and $E'$ be an exceptional curve with $-E'^2=d\ge 3$ and $E'\cong \mathbb{P}^1$. Then we have $$0=E\pi^*K_S=E'(K_{\widetilde{X}}-\sum a_E E)=-2-E'^2-a_{E'}E'^2-\sum\limits_{E\not=E'} a_E EE'.$$
		Therefore, 
		$$-a_{E'}=\frac{-2-E'^2-\sum\limits_{E\not=E'} a_E EE'}{-E'^2}\ge \frac{-2-E'^2}{-E'^2}=\frac{d-2}{d}.$$
		'=' holds, if and only if $EE'=0$ for all $E\not=E'$, i.e. the exceptional locus of $\pi$ consists only of $E'$.
		
	\end{remark}
	
	\subsection{the numerical fundamental cycles of smooth point, $ADE$ singularities and elliptic Gorenstein singularities}\label{fundmtclcle}
	
	Let $x\in X$ be an $ADE$ singularity on a normal surface. Let $\pi:Y \to X$ be a minimal resolution of singularity. There exists a unique minimal effective divisor $Z:=\sum_{k=1}^{n} r_kC_j>0$ called the \emph{numerical fundamental cycle} or \emph{fundamental cycle} 
	such that $ZC_j\le 0$, $k=1,...,n$. 
	Moreover, $\mathrm{Supp}Z=\mathrm{Exc}(\pi)$, and $\mathfrak{M}_x \mathcal{O}_Y =\mathcal{O}_Y(-Z)$, where $\mathfrak{M}_x$ is the maximal ideal sheaf of $x$ in $X$. (See \cite{Reid97} 4.17, or \cite{BHPV}  {III. \textsection 3}).

	Let $x\in X$ be a simple elliptic singularity or a cusp on a normal surface, which is an elliptic Gorenstein singularity (we refer to \cite{Reid97} 4.21 for definition). 
	Let $\pi:Y \to X$ be a minimal resolution of singularity. Similarly to the $ADE$-singularity, the associated numerical fundamental cycle $Z$ is defined.
	On the other hand, we have $K_Y=\pi^*K_X-\sum a_EE=\pi^*K_X+\sum E$, where $\sum E$ denoted by $Z_K$ is called the canonical cycle. $Z_K$ is either a smooth elliptic curve when $x$ is an simple elliptic singularity, or a cycle of ration curve when $x$ is a cusp. 
	We have $Z=Z_K$(see \cite{Reid97} 4.21). 
	Set $d:=-Z^2$, defined to be the degree of $x$.
	If $d\ge 2$, $\mathfrak{M}_x\mathcal{O}_Y=\mathcal{O}_Y(-Z)$. If $d=1$, $\mathfrak{M}_x\mathcal{O}_Y=\mathfrak{M}_p\mathcal{O}_Y(-Z)$, where $p$ is a smooth point on  $Z$.
	(See \cite{Reid97} 4.23, 4.25.)
	
	Let $x\in X$ be a smooth point and $\pi:Y \to X$ be the blowup at $x$ with $E$ be the $(-1)$-curve. 
	The associated numerical fundamental cycle $Z$ is defined. It is evident that $Z=E$ and $\mathfrak{M}_x \mathcal{O}_Y =\mathcal{O}_Y(-Z)$.
	
	Let $x\in X$ be a smooth point, an $ADE$ singularity or an elliptic Gorenstein singularity and $Z$ be its numerical fundamental cycle. 
	Let $|L|$ is a movable linear system of Cartier divisors on a normal surface $X$ with $x$ contained in the base locus $\mathrm{Bs}|L|$.
	Write $\pi^*|L|=|\bar{L}|+G$, where $G$ is the base part.
	We have $G\ge Z$ by the previous discussion.
	Moreover $\bar{L}G\ge \bar{L}Z>0$, since $\mathrm{Supp}Z=\mathrm{Exc}(\pi)$ and $\bar{L}\cap \mathrm{Exc}(\pi)\not=\emptyset$.

	\subsection{lc singularities on a double covering surface of a smooth surface}
	Let $f\colon X\to S$ be a double covering map with $S$ smooth, $X$ normal and $K_X$ Cartier. Let $R$ be the ramification divisor, i.e. $K_X=f^*K_S+R$. The branched locus  $B=f_*R$. If $B$ is smooth, then $X$ is smooth. If $p$ is a singularity of $B$, then the preimage $q=f^{-1}(p)$ is an isolated singularity of $X$. 
	%
	If $q$ is a log canonical singularity on $X$, then $p$ is a simple singularity of $B$, a [3,3]-point (namely a triple point with an infinitely near triple point $p_1$ and all the points infinitely near to $p_1$ are at most double), or a quadruple point such that every point infinitely near to $p$ is at most double.
	We will call such curve singularities on $B$ are of lc type. Their corresponding singularities on $X$ are canonical singularities, elliptic Gorenstein singularities of degree 1 and elliptic Gorenstein singularities of degree 2. See \cite{FPR15a} 4.B or \cite{LR12} and \cite[6.5]{KSC04}.

	\section{Noether type inequality for stable surface with $|K_X|$ not composed with a pencil}\label{ncp}
	
	Let $X$ be a normal KSBA stable surface and $K_X$ be its canonical divisor. When $p_g\ge2$, the linear system $|K_X|$ defines a rational map $\phi_{|K_X|} \colon X \dashrightarrow \mathbb{P}^{p_g-1}$. We say $|K_X|$ is composed with a pencil (resp. not composed with a pencil) if the image $\phi_{|K_X|} (X)$ is a curve (resp. a surface). 

	Let $\pi\colon \widetilde{X} \to X$ be a resolution of singularities where $K_X$ is not Cartier. 
	$K_{\tilde{X}}$ is Cartier and there are at worst $ADE$ singularities and elliptic Gorenstein singularities on $\widetilde{X}$.  
	We have the discrepancy formula
	$$K_{\tilde{X}}=\pi^*K_X+\sum a_EE$$
	with $-1\le a_E<0$. 
	By definition, $p_g(X)=h^0(X,K_X)=h^0(\widetilde{X},\lfloor K_{\widetilde{X}}-\sum a_EE\rfloor)=h^0(\widetilde{X}, K_{\widetilde{X}}+\sum\limits_{a_E=-1} E)$.
	The curve $C:=\sum\limits_{a_E=-1} E$ is reduced and rational as the singularities resolved by $\pi$ are log terminal. 
	We have $h^0(\widetilde{X}, K_{\widetilde{X}}+C)=h^0(\widetilde{X}, K_{\widetilde{X}})$ by considering the associated cohomology sequence of $0\to \omega_X\to \omega(C)\to \omega_{C}\to 0$. Therefore $p_g(X)=p_g(\widetilde{X})$. Since $-\sum a_EE\ge 0$ and $K_{\tilde{X}}E\ge 0$, we have
	$$K_X^2=\pi^*K_X^2=K_{\tilde{X}}\cdot\pi^*K_X=K_{\tilde{X}}^2-K_{\tilde{X}}\sum a_EE\ge K_{\tilde{X}}^2.$$
	If $K_X$ is not Cartier, i.e. $\epsilon\not =\mathrm{id}$, then there exists an exceptional curve $E$ with $a_E<0$ and $K_{\tilde{X}}E>0$ by \ref{clssoflc}, and thus $K_X^2>K_{\tilde{X}}^2$.
	
	The linear system $|K_{\tilde{X}}|=|W|+V$, where $V$ the fixed part. Note that $W$ and $V$ are only  Weil-divisors. 
	Let $\epsilon\colon \bar{X}\to \widetilde{X}$ be a succession of resolutions and blowups of base points of the movable part of the pullbacks of $|W|$, such that $\epsilon^*K_{\widetilde{X}}=|L|+G$ where $G$ is the base part and the movable part $|L|$ is base-point-free. We see that $\epsilon^*K_{\widetilde{X}}=K_{\bar{X}}+\sum Z_i^e-\sum \mathcal{E}_k$, where $Z_i^e$'s are the pullbacks of the numerical fundamental cycles of  elliptic Gorenstein singularities and $\mathcal{E}_i$'s are the pullbacks of $(-1)$ curve of the blowups. 
	We have the following commutative diagram:
	\[
	\xymatrix{
		\bar{X}\ar[drr]_{\phi_{|L|}}\ar[r]^{\epsilon} &\tilde{X}\ar[r]^{\pi}\ar@{-->}[dr]^{\phi_{|K_{\tilde{X}}|}} &X\ar@{-->}[d]^{\phi_{|K_X|}}\\
		&&\mathbb{P}^{p_g-1}
	}\]
	Note that a general $L\in |L|$ is  the strict transformation of some $W\in |V|$ and $G= \overline{V}+R$, where $R$ is an effective $\epsilon$-exceptional divisor. 
	Moreover, if $\epsilon \not= \mathrm{id}$, by \textsection\ref{fundmtclcle} we have $G\ge \sum Z_i^e+ \sum Z_j^c+\sum \mathcal{E}_k$, where   $Z_j^c$'s are the pull-backs of numerical fundamental cycles of $ADE$ singularities resolved by $\epsilon$. 
	\begin{lemma}\label{firstineq}
		With notations as above, we have
		\begin{align*}
			K_X^2=\epsilon^*\pi^*K_X^2=&(L+G)\epsilon^*\pi^*K_X \\ 
			=&L(L+G-\sum a_E\epsilon^*E)+G\cdot\epsilon^*\pi^*K_X\\
			=& L^2+LG-\sum a_E\epsilon^*EL+G\cdot\epsilon^*\pi^*K_X\\
			\ge& L^2.
		\end{align*}

		Moreover, the following statements hold true:
		\begin{itemize}
			\item if $LG=0$, then $\epsilon=\mathrm{id}$;
			\item  if $LG=G\cdot\epsilon^*\pi^*K_X=0$, then $\epsilon=\mathrm{id}$ and $G=0$. 
			\item if $K^2=L^2$, then $\pi,\epsilon=\mathrm{id}$ and $G=0$, which means $K_X$ is Cartier and $|K_X|$ is base-point-free. 
		\end{itemize}
	\end{lemma}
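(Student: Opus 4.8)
The plan is to verify the displayed chain of equalities by a direct intersection computation and then read off the three supplementary statements from the signs of the individual terms. First I would record the two identities that drive the computation. Since $K_{\widetilde X}=\pi^*K_X+\sum a_EE$, pulling back by $\epsilon$ gives $\epsilon^*\pi^*K_X=\epsilon^*K_{\widetilde X}-\sum a_E\epsilon^*E=L+G-\sum a_E\epsilon^*E$. Moreover, for each $\pi$-exceptional curve $E$ the projection formula yields $\epsilon^*E\cdot\epsilon^*\pi^*K_X=E\cdot\pi^*K_X=\pi_*E\cdot K_X=0$, so the correction term $\sum a_E\epsilon^*E$ is orthogonal to $\epsilon^*\pi^*K_X$. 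Substituting $\epsilon^*\pi^*K_X=L+G-\sum a_E\epsilon^*E$ into one factor of $K_X^2=(\epsilon^*\pi^*K_X)^2$ and using this orthogonality reproduces the displayed equalities. The final inequality $K_X^2\ge L^2$ then follows term by term: $LG\ge 0$ because $L$ is base-point-free, hence nef, and $G$ effective; $-\sum a_E(\epsilon^*E\cdot L)\ge 0$ because $-a_E>0$ and $\epsilon^*E\cdot L\ge 0$; and $G\cdot\epsilon^*\pi^*K_X\ge 0$ because $\epsilon^*\pi^*K_X$ is nef and $G$ effective.

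For the first bullet I would argue by contradiction: if $\epsilon\ne\mathrm{id}$, then $\epsilon$ resolves or blows up at least one base point $x$ of the movable part, where $x$ is a smooth point, an $ADE$, or an elliptic Gorenstein point of $\widetilde X$. By the last paragraph of \textsection\ref{fundmtclcle} the base part dominates the numerical fundamental cycle $Z_x$, so $G\ge Z_x$, and the moving part meets the exceptional locus, giving $L\cdot Z_x>0$; since $L$ is nef this forces $LG\ge L\cdot Z_x>0$, contradicting $LG=0$. Hence $LG=0$ implies $\epsilon=\mathrm{id}$.

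For the second bullet, $\epsilon=\mathrm{id}$ reduces us to $\widetilde X$ with $G=V$ the fixed part of $|K_{\widetilde X}|$ and $V\cdot\pi^*K_X=0$. Because $K_X$ is ample, $\pi^*K_X\cdot V=K_X\cdot\pi_*V$ forces every non-exceptional component of $V$ to vanish, so $V$ is $\pi$-exceptional. From $LV=0$ and $L$ nef I get $L\cdot E_0=0$ for every component $E_0$ of $V$, and since $L=K_{\widetilde X}-V$ this gives $V\cdot E_0=K_{\widetilde X}\cdot E_0$; as $\pi$ is a minimal resolution over the non-Cartier locus each such $E_0$ is rational with $E_0^2\le-2$, so $K_{\widetilde X}\cdot E_0=-2-E_0^2\ge 0$ and hence $V\cdot E_0\ge 0$. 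Writing $V=\sum v_{E_0}E_0$ with $v_{E_0}>0$ yields $V^2=\sum v_{E_0}(V\cdot E_0)\ge 0$, while negative-definiteness of the exceptional configuration forces $V^2\le 0$ with equality only for $V=0$; thus $V=G=0$. The third bullet combines these: $K_X^2=L^2$ forces all three nonnegative terms to vanish, so $\epsilon=\mathrm{id}$ and $G=0$ by the first two bullets, while $-\sum a_E(E\cdot L)=0$ with $-a_E>0$ gives $E\cdot L=0$ for every exceptional $E$; since $G=0$ means $L=K_{\widetilde X}$, we obtain $K_{\widetilde X}\cdot E=0$ for all $E$, contradicting the existence (noted before the lemma, via the classification in \textsection\ref{clssoflc}) of an exceptional curve with $K_{\widetilde X}\cdot E>0$ whenever $K_X$ is not Cartier. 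Hence $\pi=\mathrm{id}$, $K_X$ is Cartier, and $|K_X|$ is base-point-free.

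I expect the main obstacle to be the second bullet: upgrading the numerical condition $G\cdot\epsilon^*\pi^*K_X=0$ to the honest vanishing $G=0$. This requires the simultaneous use of the ampleness of $K_X$ (to force $G$ to be $\pi$-exceptional), the nefness of $L$ together with $LG=0$ (to pin down $L\cdot E_0=0$ and hence $V\cdot E_0=K_{\widetilde X}\cdot E_0\ge 0$), and the negative-definiteness of the exceptional lattice; none of these ingredients suffices in isolation, and it is their combination that closes the gap between $V^2\ge 0$ and $V^2\le 0$.
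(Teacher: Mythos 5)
Your proof is correct and follows essentially the same route as the paper's: the displayed identities come from the projection formula and the orthogonality of exceptional divisors to $\epsilon^*\pi^*K_X$; the first bullet invokes, as the paper does, the fact from \textsection 2.2 that $G$ dominates the fundamental cycles and $L$ meets them positively; and your component-wise computation $V^2=\sum v_{E_0}(V\cdot E_0)=\sum v_{E_0}(K_{\widetilde{X}}\cdot E_0)\ge 0$ against negative definiteness is exactly the paper's squeeze $0\le K_{\widetilde{X}}G=(L+G)G=G^2\le 0$. The third bullet likewise matches the paper, which appeals to the observation recorded just before the lemma that a non-Cartier $K_X$ forces some exceptional $E$ with $a_E<0$ and $K_{\widetilde{X}}E>0$.
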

	\begin{proof}
		Since $G\ge 0$ and $K_X$ is ample, we have $G\cdot\epsilon^*\pi^*K_X\ge 0$. Since $G\ge 0$,  $-\sum a_EE\ge 0$ and $L$ is nef, we have $LG\ge 0$ and $-\sum a_E\epsilon^*EL\ge 0$.
		Therefore, the inequality holds true.
		
%
		If $\epsilon\not=\mathrm{id}$, we have $LG\ge LR\ge L(\sum Z_i^e+ \sum Z_j^c+\sum \mathcal{E}_k)>0$.
		If further $G\cdot \pi^*K_X=0$, then $G$ is $\pi$-exceptional and thus $GK_{\tilde{X}}\ge 0$. We have 
		$$0\le K_{\tilde{X}}G=(L+G)G=G^2\le 0.$$
		As a result, we have $G=0$. 
		
		If $K^2=L^2$ holds, then $LG=G\cdot\epsilon^*\pi^*K_X=0$ and $-\sum a_E\epsilon^*EL=0$. 
		By the previous argument, $LG=G\cdot\epsilon^*\pi^*K_X=0$ implies $\epsilon=\mathrm{id}$ and $G=0$. Thus, $-\sum a_E\epsilon^*EL=0$ is equivalent to $-\sum a_EEK_{\tilde{X}}=0$, which implies $-\sum a_EE=0$ and thus $\pi=\mathrm{id}$. The last statement of the theorem follows.
		
	\end{proof}
	\begin{lemma}\label{castelnuovo}
		If $\phi_{|K_X|}$ is birational, then $K_X^2\ge 3p_g-7$.
		
	\end{lemma}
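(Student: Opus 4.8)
The plan is to convert the statement into a lower bound for the degree of the canonical image and then to read that bound off from a general hyperplane section via Castelnuovo's genus inequality. Throughout set $d:=L^2$.

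\textbf{Step 1 (reduction to the image degree).} Since $\phi_{|K_X|}$ is birational, the base-point-free morphism $\phi_{|L|}\colon \bar X\to \mathbb P^{p_g-1}$ is birational onto its image $\Sigma:=\phi_{|L|}(\bar X)$, a non-degenerate irreducible surface. Because $L=\phi_{|L|}^*\mathcal O(1)$ and $\phi_{|L|}$ has degree $1$ onto $\Sigma$, we get $\deg\Sigma=L^2=d$. By Lemma~\ref{firstineq} we have $K_X^2\ge L^2=\deg\Sigma$, so it suffices to prove $\deg\Sigma\ge 3p_g-7$.

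\textbf{Step 2 (a general hyperplane section).} Let $C\in|L|$ be a general member. Since $|L|$ is base-point-free and $\phi_{|L|}$ is birational, Bertini's theorem makes $C$ smooth and irreducible, and $\phi_{|L|}$ restricts to a birational morphism $C\to \Gamma:=\Sigma\cap H$ onto a non-degenerate irreducible curve of degree $d$ in a hyperplane $H\cong\mathbb P^{p_g-2}$. From the restriction sequence $0\to \mathcal O_{\bar X}\to \mathcal O_{\bar X}(L)\to \mathcal O_C(L|_C)\to 0$ and $h^0(\bar X,L)=p_g$, the restricted system $A:=L|_C$ satisfies $\dim|A|\ge p_g-2$; it is base-point-free and birationally very ample, $\Gamma$ spans $\mathbb P^{p_g-2}$, and $g(C)$ equals the geometric genus of $\Gamma$.

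\textbf{Step 3 (the sectional genus is as large as the degree).} I would compute, by adjunction on $\bar X$,
\[
2g(C)-2=(K_{\bar X}+C)\cdot C=L^2+K_{\bar X}\cdot L .
\]
Substituting $K_{\bar X}=\epsilon^*K_{\widetilde X}-\sum Z_i^e+\sum\mathcal E_k=L+G-\sum Z_i^e+\sum\mathcal E_k$ gives
\[
2g(C)-2=2L^2+L\cdot\Big(G-\sum Z_i^e\Big)+L\cdot\sum\mathcal E_k .
\]
By \textsection\ref{fundmtclcle} the base part satisfies $G\ge \sum Z_i^e+\sum Z_j^c+\sum\mathcal E_k\ge \sum Z_i^e$, so, $L$ being nef and the remaining cycles effective, every correction term is non-negative; hence $g(C)\ge L^2+1=d+1$.

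\textbf{Step 4 (Castelnuovo's inequality and the main obstacle).} The curve $\Gamma$ is irreducible, non-degenerate, of degree $d$ in $\mathbb P^{p_g-2}$, so its geometric genus obeys Castelnuovo's bound $g(C)\le \pi(d,p_g-2)$. Writing $d-1=m(p_g-3)+\epsilon$ with $0\le\epsilon\le p_g-4$ and $\pi(d,p_g-2)=\binom{m}{2}(p_g-3)+m\epsilon$, the inequality $g(C)\ge d+1$ of Step~3 is incompatible with $m\le 2$ and, for $m=3$, forces $\epsilon\ge 1$, i.e. $d\ge 3(p_g-3)+2=3p_g-7$. Combined with Step~1 this yields $K_X^2\ge 3p_g-7$. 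I expect the main obstacle to be purely at the level of bookkeeping in Step~3: one must verify that none of the correction cycles $\sum Z_i^e$, $\sum\mathcal E_k$ and the base part $G$ conspire to lower the sectional genus below $d+1$ (the inequality $G\ge\sum Z_i^e$ is exactly what prevents this), and one must check the hypotheses of Castelnuovo's bound via the uniform position lemma applied to a general hyperplane section of $\Gamma$. The small cases $p_g\le 3$, where $\mathbb P^{p_g-2}$ is too small for the curve argument, are handled directly: a birational canonical map there would force $\Sigma$, hence $\bar X$, to be rational, contradicting general type.
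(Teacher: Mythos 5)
Your argument takes a genuinely different route from the paper's. The paper restricts to a general $C\in|L|$ and runs the base-point-free pencil trick of \cite[Lemma 1.1]{Hor76b} (the linear map $\gamma$ on subspaces of $H^0(K_C)$), obtaining $\deg(L|_C)\ge 3r-4$ with $r=h^0(C,L|_C)\ge p_g-1$ directly; you instead pass to the canonical image $\Sigma$ and its general hyperplane section $\Gamma$, and play the adjunction-theoretic lower bound $g(C)\ge L^2+1$ of your Step 3 (which is correct: $H:=G-\sum Z_i^e+\sum\mathcal E_k\ge 0$ because $G\ge\sum Z_i^e+\sum Z_j^c+\sum\mathcal E_k$, and $L$ is nef) against Castelnuovo's upper bound for the geometric genus of an integral non-degenerate degree-$d$ curve in $\mathbb P^{p_g-2}$. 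Your case analysis on $m$ is arithmetically correct and yields $d\ge 3p_g-7$ whenever $p_g\ge 4$. What your approach buys is a transparent geometric reason for the bound (the sectional genus is too large for $\Gamma$ to sit in so small a projective space unless its degree is at least $3p_g-7$); what the paper's buys is uniformity, since the pencil-trick computation needs no uniform position lemma and no separate treatment of small $p_g$.

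The one genuine flaw is your disposal of $p_g=3$, where $\Gamma$ would have to live in $\mathbb P^{1}$ and Castelnuovo is unavailable. You assert that a birational canonical map would make $\bar X$ rational, ``contradicting general type.'' But $X$ is only a normal KSBA stable surface: ampleness of $K_X$ does not force the smooth model $\bar X$ to be of general type --- the positivity can be carried entirely by the lc singularities, and the paper's own Section 6 is populated by stable surfaces whose smooth models are elliptic fibrations rather than surfaces of general type. So this is not a contradiction you may invoke. The case is nonetheless salvageable by your own Step 3: for $p_g=3$ a birational $\phi_{|L|}$ forces $\Sigma=\mathbb P^2$ and $d=L^2=1$, so a general $C\in|L|$ maps with degree $L\cdot C=1$ onto a line, whence $C\cong\mathbb P^1$ and $g(C)=0$, contradicting $g(C)\ge d+1=2$; thus the hypothesis is vacuous for $p_g=3$, while $p_g=2$ is trivial since $3p_g-7<0<K_X^2$. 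With that repair the proof is complete.
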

	\begin{proof}
		We have $K_{\bar{X}}=L+G-\sum Z_i^e+\sum \mathcal{E}_k$. Since $G\ge \sum Z_i^e+ \sum Z_j^c+\sum \mathcal{E}_k$, the divisor $H:=G-\sum Z_i^e+\sum \mathcal{E}_k\ge 0$. By Bertini's theorem we can take an irreducible smooth curve $C\in|L|$. By adjunction formula we have $K_C=(2L+H)|_C$. 
		The remaining proof is a mimick of \cite[Lemma 1.1]{Hor76b}. We include it here in the following for the sake of clarity and completeness. 
		
		Let $r=h^0(C,L|_C)$ and $\Gamma=p_1+...+p_{r-2}$ be a sufficient general effective divisor on $C$ such that $h^0(C,L|_C-\Gamma)=2$ and $h^0(C,K_C-\Gamma)=g(C)-(r-2)$. Since $|L|_C|$ defines a birational map of $C$, there exist  $\vartheta,\vartheta'\in |L|_C|$ such that $\mathrm{inf} (\vartheta,\vartheta')=\Gamma$. Moreover, we may assume $\vartheta+\vartheta'$ is disjoint from $H|_C$. Denote $\Omega(K_C-\vartheta)\colon=\mathrm{im}(H^0(K_C-\vartheta)\hookrightarrow H^0(K_C))$. we have the following linear map:
		\begin{align*}
			\gamma\colon \Omega(K_C-\vartheta) &\oplus \Omega(K_C-\vartheta'-H|_C) \rightarrow  \Omega(K_C-\Gamma)\\
			&(\phi,\phi') \mapsto \phi+\phi'.
		\end{align*}
		Let $(\phi,\phi')\in \mathrm{ker}\gamma$. Then $\phi=-\phi'$ vanishes on $\mathrm{sup}(\vartheta,\vartheta'+H|_C)=(\vartheta+\vartheta'+H|_C-\Gamma)\sim K_C-\Gamma$. Hence $\mathrm{ker}\gamma\cong H^0(C,K_C-(K_C-\Gamma))\cong H^0(C,\Gamma)\cong \mathbb{C}$. 
		On the other hand we have 
		\begin{align*}
			&\mathrm{dim}\Omega(K_C-\vartheta)=h^0(C,K_C-\vartheta)=r-\mathrm{deg} \vartheta+g(C)-1,\\
			&\mathrm{dim}\Omega(K_C-\vartheta'-H|_C)=h^0(C,L|_C)=r,\\
			&\mathrm{dim}\Omega(K_C-\Gamma)=g(C)-(r-2).
		\end{align*}
		
		Therefore we obtain the inequality $\mathrm{deg} \vartheta\ge 3r-4 $. As $r\ge h^0(\bar{X},L)-1=h^0(\widetilde{X},K_{\widetilde{X}})-1=p_g(X)-1$, 	we have $K_X^2\ge L^2$ by Lemma \ref{firstineq}. Since $L^2=\mathrm{deg} \vartheta$, it follows that $K_X^2\ge L^2\ge 3p_g-7$.
		
	\end{proof}
	
	\begin{theorem}
		Let $X$ be a normal KSBA stable surface with $|K_X|$ not composed with a pencil. Then $K_X^2\ge 2p_g-4$. Moreover if '=' holds, then $K_X$ is Cartier, $|K_X|$ is base-point-free, and $\phi_{|K_X|}$ defines a double covering map onto a surface of minimal degree. 
	\end{theorem}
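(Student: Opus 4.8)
The plan is to run the Horikawa-style analysis on the base-point-free system $|L|$ constructed just before Lemma \ref{firstineq}, and then combine the degree estimate for $L^2$ with the two lemmas already in hand. Since $|K_X|$ is not composed with a pencil, the image $\phi_{|L|}(\bar{X})$ is a surface, which in particular forces $p_g\ge 3$; recalling that $h^0(\bar{X},L)=h^0(\widetilde{X},K_{\widetilde{X}})=p_g$ (as used in the proof of Lemma \ref{castelnuovo}), the morphism $\phi_{|L|}$ sends the irreducible surface $\bar{X}$ onto a nondegenerate irreducible surface $\Sigma\subseteq \mathbb{P}^{p_g-1}$. First I would record the two structural invariants: the degree $\nu:=\deg(\phi_{|L|})$ of the generically finite morphism $\bar{X}\to\Sigma$, and the degree $d:=\deg\Sigma$. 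Because $|L|$ is base-point-free, $L=\phi_{|L|}^*\mathcal{O}(1)$ and one has the clean identity $L^2=\nu\,d$; and because $\Sigma$ is a nondegenerate surface in $\mathbb{P}^{p_g-1}$, the classical minimal-degree bound (degree $\ge$ codimension $+\,1$) gives $d\ge (p_g-1)-2+1=p_g-2$.

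The argument then splits on $\nu$. If $\nu=1$, then $\phi_{|L|}$ is birational onto its image and Lemma \ref{castelnuovo} yields $K_X^2\ge 3p_g-7$; since $p_g\ge 3$ this already exceeds $2p_g-4$ whenever $p_g>3$, while in the boundary case $p_g=3$ we have $\Sigma=\mathbb{P}^2$, so a birational $\phi_{|L|}$ would force $\bar{X}$ to be rational, contradicting $p_g=3>0$. Hence the birational case gives $K_X^2>2p_g-4$ strictly (when it occurs at all). If instead $\nu\ge 2$, then Lemma \ref{firstineq} together with the identity above gives
\[
K_X^2\ge L^2=\nu\,d\ge 2(p_g-2)=2p_g-4.
\]
Combining the two cases establishes the inequality $K_X^2\ge 2p_g-4$.

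For the equality statement, suppose $K_X^2=2p_g-4$. The previous paragraph shows we must be in the case $\nu\ge 2$, and that the whole chain is forced to be equalities: $K_X^2=L^2$, $\nu=2$, and $d=p_g-2$ (note $\nu\ge 3$ is excluded since it would give $L^2\ge 3(p_g-2)>2p_g-4$). By the last bullet of Lemma \ref{firstineq}, $K_X^2=L^2$ forces $\pi=\epsilon=\mathrm{id}$ and $G=0$, so $K_X$ is Cartier and $|K_X|$ is base-point-free; in particular $\bar{X}=X$ and $L=K_X$. The remaining equalities $\nu=2$ and $d=p_g-2$ then say precisely that $\phi_{|K_X|}$ is a degree-two morphism onto a surface of minimal degree $p_g-2$ in $\mathbb{P}^{p_g-1}$, which is the asserted structure.

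The main obstacle I anticipate is not the inequality but making the equality bookkeeping airtight: one must confirm that the two independent lower bounds---$d\ge p_g-2$ from nondegeneracy and $\nu\ge 2$ from the birational case being excluded---are simultaneously sharp, and that $\nu\ge 3$ is genuinely ruled out. A secondary point needing care is the factorization $L^2=\nu\,d$ and the nondegeneracy of $\Sigma$, both of which rest on $|L|$ being the full base-point-free movable part with $h^0=p_g$, exactly as arranged in the construction preceding Lemma \ref{firstineq}; I would make explicit that $\bar X$ irreducible implies $\Sigma$ irreducible so that the minimal-degree bound applies.
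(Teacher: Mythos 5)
Your overall architecture is the same as the paper's: split on $\deg\phi_{|L|}$, use Lemma \ref{castelnuovo} in the birational case, use $K_X^2\ge L^2=\deg\phi_{|L|}\cdot\deg W\ge 2(p_g-2)$ in the non-birational case, and at equality invoke the third bullet of Lemma \ref{firstineq} to get $\pi=\epsilon=\mathrm{id}$, $G=0$. That part is correct and is exactly the paper's argument.

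There is, however, a genuine gap in how you dispose of the birational case when $p_g=3$. You assert that a birational $\phi_{|L|}$ onto $\Sigma=\mathbb{P}^2$ would make $\bar{X}$ rational, ``contradicting $p_g=3>0$.'' This is not a contradiction in the present setting. The invariant $p_g(X)=h^0(X,K_X)=h^0(\widetilde{X},K_{\widetilde{X}})$ is computed on a normal surface that may carry elliptic Gorenstein (hence non-rational) singularities, and for such surfaces positivity of $p_g$ is perfectly compatible with the smooth model being rational: on $\bar{X}$ one only has $K_{\bar{X}}=L+H$ with $H=G-\sum Z_i^e+\sum\mathcal{E}_k\ge 0$, and passing to a resolution subtracts the elliptic cycles, so $h^0$ of the resolution's canonical bundle can drop to $0$ while $h^0(\widetilde X,K_{\widetilde X})=3$. (Indeed, the entire case (C) of the paper's Section 6 concerns stable surfaces with $p_g\ge 2$ that are birational to elliptic surfaces over $\mathbb{P}^1$, which are frequently rational.) So you have not actually excluded the configuration $p_g=3$, $\phi_{|K_X|}$ birational, $K_X^2=2$, which is precisely the configuration that must be killed for the equality statement. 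The paper's fix is different and does work: if equality holds and $\phi_{|K_X|}$ is birational, the whole chain $K_X^2\ge L^2\ge 3p_g-7\ge 2p_g-4$ collapses to equalities, so by Lemma \ref{firstineq} one gets $\pi=\epsilon=\mathrm{id}$ and $G=0$; then $\phi_{|K_X|}$ is a birational \emph{morphism} onto $\mathbb{P}^2$ defined by the ample Cartier divisor $K_X$, hence contracts no curve, hence is an isomorphism, which contradicts $K_{\mathbb{P}^2}$ being anti-ample. You should replace your rationality argument with this (or an equivalent) use of the ampleness of $K_X$; the rest of your write-up then goes through unchanged.
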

	\begin{proof}
		If $\phi_{|K_X|}$ is birational, then by Lemma \ref{castelnuovo} we have
		\begin{align}\label{bir}
			K_X^2\ge L^2\ge 3p_g-7=2p_g-4+p_g-3\ge 2p_g-4. 
		\end{align}
		
		It remains to consider the case $\mathrm{deg}\phi_{|K_X|}\ge 2$. 
		Let $W$ be the image of $\phi_{|L|}\colon \bar{X}\to \mathbb{P}^{p_g-1}$. We have $\mathrm{deg}W\ge p_g-2$. 
		In fact, a nondegenerate $n$-dimensional projective irreducible variety $Z\subset \mathbb{P}^{m}$ always satisfies
		$\Delta(Z,\mathcal{O}_Z(1))=\mathcal{O}_Z(1)^{n}- h^0(Z,\mathcal{O}_Z(1))+n=\mathrm{deg}Z-(m+1)+n\ge 0$, where $\Delta$ is Fujita's $\Delta$-invariant (see \cite{Fuj75}). 	
		$Z$ is called a variety of minimal degree if $\mathrm{deg} Z = m-n+1$.  
		In this case, we have 
		\begin{align}\label{nonbir}
			K_X^2\ge L^2=\mathrm{deg}\phi_{|L|}\mathcal{O}_W(1)^2=\mathrm{deg}\phi_{|L|}\mathrm{deg}W\ge 2p_g-4.
		\end{align}
		
		Now assume $K_X^2=2p_g-4$  holds. We claim that $\phi_{|K_X|}$ is not birational. Otherwise, assume $\phi_{|K_X|}$ is birational for a contradiction. $K_X^2=2p_g-4$ implies that all the equalities in (\ref{bir}) hold. Hence, $p_g=3$,  $K_X^2=L^2$, $G=0$, $\pi=\epsilon=\mathrm{id}$ and $\phi_{|K_X|}$ is a birational morphism onto $\mathbb{P}^2$. Since $K_X$ is ample,  $\phi_{|K_X|}$ would be an isomorphism onto $\mathbb{P}^2$, a contradiction. 
		Therefore, $\mathrm{deg}\phi_{|K_X|}\ge 2$  and then all the equalities hold in (\ref{nonbir}). 
		By Lemma \ref{firstineq}, we have $G=0$, $\pi=\epsilon=\mathrm{id}$, $\mathrm{deg}\phi_{|K_X|}=2$ and $\mathrm{deg}W=p_g-2$. Thus, $K_X$ is Cartier, $|K_X|$ is base-point-free, $\phi_{|K_X|}$ is a double covering map, and $W$ is a surface of minimal degree in $\mathbb{P}^{p_g-1}$. 
	\end{proof}

	\section{stable surfaces with $K^2=2p_g-4$}
	
	Denote $N:=p_g-1$.
	It is well known that a surface $W$ of minimal degree $N-1$ in $\mathbb{P}^N$ is one of the following: (cf. \cite{Nag60})
	\begin{itemize}
		\item[(i)] $N=2$ and $W=\mathbb{P}^2$;
		\item[(ii)] $N=5$ and $W=\mathbb{P}^2$ embedded in $\mathbb{P}^5$ by $|\mathcal{O}_{\mathbb{P}^2}(2)|$;
		\item[(iii)] $N\ge 3$ and $W=\Sigma_d$, where $\Sigma_d$ is the Hirzebruch surface and $N-d-3$ is a non-negative even integer. $W$ is embedded in $\mathbb{P}^N$ by $|\Delta_0+\frac{N-1+d}{2}\Gamma|$, where $\Gamma$ is a fiber of ruling and $\Delta_0$ is the 0-section with self-intersection $-d$;
		\item[(iv)] $N\ge 3$ and $W\hookrightarrow \mathbb{P}^{N}$ is a cone over a rational curve of degree $N-1$ in $\mathbb{P}^{N-1}$.
	\end{itemize}
	
	We note in case (i)(ii)(iii), $W$ is smooth.  $\phi_{|K_X|}$ is a flat double covering map. The building data of $\phi_{|K_X|}$ is $(\mathcal{L},B)$, where $\mathcal{L}$ is a line bundle on $W$ and $B\in |2\mathcal{L}|$ is the branched curve (see \cite{AP13}).
	The discussion in \textsection \ref{prel} tells that $B$ is a reduced curve with at worst singularities of lc type.
	We give a detail description of $B$ in each case of $W$. It is similar to the situation of smooth surface of general type (cf. \cite{Hor76a}).
	\begin{itemize}
		\item[(i)] the case $N=2$ and $W=\mathbb{P}^2$: $B\in |\mathcal{O}_{\mathbb{P}^2}(8)|$. $X$ is a stable surface with $p_g=3$, $q=0$, and $K^2_X=2$. For general $B$, $X$ is smooth.
		\item[(ii)] the case $N=5$ and $W=\mathbb{P}^2$: $B\in |\mathcal{O}_{\mathbb{P}^2}(10)|$. $X$ is a stable surface with $p_g=6$, $q=0$, and $K^2_X=8$. For general $B$, $X$ is smooth.
		\item[(iii)] the case $W=\Sigma_d$ embedded in $\mathbb{P}^N$: $B\in |6\Delta_0+(N+3+3d)\Gamma|$, $N\ge2d-3$. $X$ is a stable surface with $p_g=N+1$, $q=0$, and $K^2_X=2N-2$. If $N\ge3d-3$, then for general $B$, $X$ is smooth. If $N<3d-3$, then generically $B=\Delta_0+B_0$ with $B_0$ irreducible and smooth, $X$ has canonical singularities lying over $\Delta_0\cap B_0$.
	\end{itemize}
	Conversely, given a branch curve $B$ on $W$ as described above, the associated double covering surface $X$ is a normal Gorenstein stable surface with $K^2_X=2p_g-4$.

	Now we consider case (iv), where $W$ is a cone $C_{N-1}$ over a rational curve $\mathbb{P}^1$ embedded in $\mathbb{P}^{N-1}$. $C_{N-1}$ has a minimal resolution $\sigma \colon \Sigma_{N-1} \to C_{N-1}$. 
	Let $V\subset |\mathcal{O}_{C_{N-1}}(1)|$ be the sublinear system consisting of hyperplane sections passing through the vertex $v$, which induces a rational map to $\mathbb{P}^1$. Moreover, $\sigma^*V=|(N-1)\gamma|+\Delta_0$ induces the ruling of $\Sigma_{N-1}$.
	The sublinear system $\phi_{|K_X|}^*V\subset |K_X|$ has base locus $\mathcal{D}\colon =\phi_{|K_X|}^{-1}(v)$, which is zero-dimensional since  $\phi_{|K_X|}$ is a finite morphism.
	Let $\mu\colon \widehat{X} \to X$ be a composition of resolution of singularities and blowup of smooth points in the base locus of pullbacks of  $\phi_{|K_X|}^*V$, such that $\mu^*\phi_{|K_X|}^*V=|L|+G$ with $|L|$ base-point-free, where  $|L|$ is the movable part and $G$ is the fixed part. We see that $L\sim (N-1)F$, where $F$ is the preimage of a line $\ell\in C_{N-1}$ passing through $v$.
	We see that $\phi_{|K_X|}{}_{\circ}\mu$ factors through  $\phi_{|L|}$ and then factors through a morphism $\gamma:\widehat{X}\to \Sigma_{N-1}$. 
	We have the following commutative diagram:
	\[
	\xymatrix{
		&\widehat{X}\ar[r]^{\mu}\ar[d]^{\gamma}\ar@/_5pc/[ddr]_{\phi_{|L|}} &X \ar[d]^{\phi_{|K_X|}}\\
		&\Sigma_{N-1}\ar[r]^{\sigma}\ar[dr]  & C_{N-1}\ar@{-->}[d]^{\phi_{V}}	\\
		&&\mathbb{P}^1}
	\]\\

	We have 
	$$\mu^*K_X\sim L+G \sim (N-1)F+G$$
	and 
	$$\mu^*K_X=K_{\widehat{X}}+\sum Z_i^e-\sum \mathcal{E}_k,$$
	where $\sum Z_i^e$ denotes the sum of pullbacks of numerical fundamental cycles associated to elliptic Gorenstein singularities in $\mathcal{D}$ and $\sum \mathcal{E}_k$ denotes the sum of pullbacks of $(-1)$ curves associated to the smooth points in $\mathcal{D}$. 
	By \textsection \ref{fundmtclcle} we have $$G\ge \sum Z_i^e+\sum Z_j^c+\sum \mathcal{E}_k$$
	and 
	$$ LZ_i^e,LZ_j^c,L \mathcal{E}_k>0,$$
	where $\sum Z_j^c$ denotes the sum of pullbacks of the numerical fundamental cycles of $ADE$-singularities in $\mathcal{D}$. 
	Since $L\sim (N-1)F$, we have $FZ_i^e,FZ_j^c,F \mathcal{E}_k>0$. Moreover, $FZ_i^e\ge 2$  as the degree of the morphism $Z_i^e\hookrightarrow \widehat{X}\stackrel{\phi_{|F|}}{\longrightarrow} \mathbb{P}^1$ must be greater than $1$.
	
	Therefore, we have 
	\begin{align}\label{baseparteq}
		\begin{split}
			F\mu^*K_X &=F\mu^*\phi_{|K_X|}^*\mathcal{O}_{C_{N-1}}(1)=2\ell \cdot \mathcal{O}_{C_{N-1}}(1)=2\\
			&=F(L+G)=FG\\
			&\ge F(\sum Z_i^e+\sum Z_j^c+\sum \mathcal{E}_k).
		\end{split}
	\end{align}
	
	Hence we have 
	\begin{itemize}
		\item[a)] either $\sum Z_i^e= Z , FZ =2$ and $\sum Z_j^c=\sum \mathcal{E}_k=0$, which means $\mu$ is a resolution of an elliptic Gorenstein singularity $p$ and $\mathcal{D}=\{p\}$;
		\item[b)] or $\sum Z_i^e=0$, which means $\mathcal{D}$ contains no elliptic Gorenstein singularity.
	\end{itemize}
	
	For case a), we have $\mu^*K_{\widetilde{X}}=K_{\widehat{X}}+Z $ and $g(F)=1+\frac{FK_{\widehat{X}}}{2}=1+\frac{2-FZ}{2}=1$. The branched divisor of $\gamma$ is
	
	\begin{align*}
		B &=\gamma_*(K_{\widehat{X}}-\gamma^*K_{\Sigma_{N-1}})\\
		&=\gamma_*(\mu^*K_{\widetilde{X}}-Z -\gamma^*K_{\Sigma_{N-1}})\\
		&\sim\gamma_*(\gamma^*\sigma^*\mathcal{O}_{C_{N-1}}(1)-Z -\gamma^*K_{\Sigma_{N-1}})\\
		&=2\sigma^*\mathcal{O}_{C_{N-1}}(1)-2\Delta_0-2K_{\Sigma_{N-1}}\\
		&\sim 4\Delta_0+4N\Gamma.
	\end{align*} 
	Since $FZ =2$, $B\not \supseteq \Delta_0$. Moreover, $B$ has at worst singularities of lc type.  Since $\mathrm{supp}(\gamma^*\Delta_0)$ is either a simple elliptic curve or a cycle of rational curves and $B\Delta_0=4$, $B\cap \Delta_0$ consists of simple points or $A_n$ singularities of $B$ and $\Delta_0$ is not tangent to $B$. If $B\cap \Delta_0$ consists of only simple points, $p$ is a simple elliptic singularity. Otherwise, $p$ is a cusp elliptic singularity.
	
	For case b), we have
	$\mu^*K_{\widetilde{X}}=K_{\widehat{X}}-\sum E_j$ and $g(F)=1+\frac{FK_{\widehat{X}}}{2}=1+\frac{2+F\sum E_j}{2}\le 2$ by \ref{baseparteq}. 
	We show that $F\sum E_j\not=2$. Otherwise, either $\mu$ blows up two different points $p_1,p_2\in \mathcal{D}$, or a point $p_1\in \mathcal{D}$ and a point $p_2$ infinitely near to $p_1$. 
	The case $p_1\not=p_2\in \mathcal{D}$ is impossible. Since then $\gamma^*\Delta_0=E_1+ E_2$, where $E_i$ is the $(-1)$-curve corresponding to $p_i$, and  $-2=(\sum E_j)^2=(\gamma^*\Delta_0)^2=-2(N-1)\le -4$, a contradiction. 
	The case  $p_2$ is infinitely near to $p_1$ is also impossible. 
	Let $E_2$ be the $(-1)$-curve corresponding to $p_2$ and $\overline{E_1}$ be the strict transformation of the $(-1)$-curve corresponding to $p_1$. $E_1=\overline{E_1}+E_2$ is the pullback of $(-1)$-curve corresponding to $p_1$. Since $\overline{E_1}$ is $\gamma$-exceptional, $\gamma^*\Delta_0\cdot \overline{E_1}=0$. 
	This together with $FE_2=1$ implies $\gamma^*\Delta_0=2E_2+ \overline{E_1}$. Therefore,
	$-2=(2E_2+ \overline{E_1})^2=(\gamma^*\Delta_0)^2=-2(N-1)\le -4$, a contradiction.

	Therefore, $F\sum E_j=0$, which means $\sum E_j=0$ and $\mu$ only resolves $ADE$-singularties.
	$\mu^*K_{\widetilde{X}}=K_{\widehat{X}}$. 
	$g(F)=1+\frac{FK_{\widehat{X}}}{2}=1+\frac{2}{2}=2$. The branched divisor of $\gamma$ is
	
	\begin{align*}
		B &=\gamma_*(K_{\widehat{X}}-\gamma^*K_{\Sigma_{N-1}})\\
		&=\gamma_*(\mu^*K_{\widetilde{X}}-\gamma^*K_{\Sigma_{N-1}})\\
		&\sim\gamma_*(\gamma^*\sigma^*\mathcal{O}_{C_{N-1}}(1)-\gamma^*K_{\Sigma_{N-1}})\\
		&=2\sigma^*\mathcal{O}_{C_{N-1}}(1)-2K_{\Sigma_{N-1}}\\
		&\sim 6\Delta_0+4N\Gamma.
	\end{align*} 
	
	If $\Delta_0\not\subseteq B$, $\Delta_0 B=-6(N-1)+4N\ge 0$ implies $N=3$ and $\Delta_0 \cap B=\emptyset$. $\mathcal{D}$ consists of two $A_1$ singularities.  If $\Delta_0\subseteq B$, then $B=B_0+\Delta_0$ and $\Delta_0 B_0=-5(N-1)+4N=5-N\ge 0$. Hence $N=3,4,5$. If $N=5$, $\mathcal{D}$ consists of one $A_1$ singularity. If $N=4$, $\mathcal{D}$ consists of one $A_2$ singularity. If $N=3$ and  $\Delta_0\cap B_0$ consists of two nodes of $B$, $\mathcal{D}$ consists of a singularity of type $A_3$.
	If $N=3$ and  $\Delta_0$ is tangent to $B_0$, $\mathcal{D}$ consists of a singularity of type $D_4$.
	If $N=3$,  $\Delta_0\cap B_0$ consists of one point and $\Delta_0$ is not tangent to $B_0$, then $B_0$ locally has an $A_n$ singularity at $\Delta_0\cap B_0$ and $\Delta_0\cap B_0$ is a singularity of $B$ of type $D_{n+3}$.  
	$\mathcal{D}$ consists of a singularity of type $D_{n+4}$. 
	Since $(\Sigma_2,B_0)$ is birational to $(\mathbb{P}^2,\widetilde{B_0}\in |10|)$, $\widetilde{B_0}$ could have a singularity $p$ of $A_n$ type for $n\le 9$. Hence $n\le 9$.  

	Therefore, $\mathcal{D}$ consists of a singularity of type $A_3$, $D_4$,$D_{n+4}$ ($0\le n\le 9$).
	
	\smallskip
	We summarize results in this section in the following theorem.
	\begin{theorem}
		Let $X$ be a  normal stable surface with $|K_X|$ not composed with a pencil and $K_X^2=2p_g-4$ and $W$ be its canonical image.
		Then if $W$ is smooth, $X$ is determined by its double covering data $(W,B)$ which is one of the following:
		\begin{itemize}
			\item[(i)] $W=\mathbb{P}^2$, $B\in |\mathcal{O}_{\mathbb{P}^2}(8)|$; ($N=2$)
			\item[(ii)] $W=\mathbb{P}^2$: $B\in |\mathcal{O}_{\mathbb{P}^2}(10)|$; ($N=5$)
			\item[(iii)] $W=\Sigma_d$, $B\in |6\Delta_0+(N+3+3d)\Gamma|$. ($N\ge2d-3$ and $N-d-3$ is a non-negative even integer)
		\end{itemize}
		
		If $W$ is singular, $X$ is obtained by taking a double cover with covering data $(\Sigma_d,B)$ then contracting the preimage of $\Delta_0$. All possible $(\Sigma_d,B)$ are listed in Table \ref{covering table}.
		
		In either case $B$ is a reduced curve with at worst singularities of lc type.
	\end{theorem}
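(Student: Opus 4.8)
The plan is to assemble the statement from the reduction already achieved in \textsection 3 and from the case-by-case analysis carried out in the body of this section, organizing everything according to the four types of surfaces of minimal degree. First I would invoke the theorem at the end of \textsection 3: under the hypotheses $K_X$ is Cartier, $|K_X|$ is base-point-free, and $\phi:=\phi_{|K_X|}$ is a finite morphism of degree $2$ onto a surface $W\subset \mathbb{P}^{N}$ of minimal degree $N-1$, where $N=p_g-1$. By Nagata's classification (cases (i)--(iv) recalled at the start of the section), $W$ is either $\mathbb{P}^2$, $\mathbb{P}^2$ re-embedded by $|\mathcal{O}(2)|$, a smooth Hirzebruch surface $\Sigma_d$, or the cone $C_{N-1}$ over a rational normal curve; the first three are smooth and the last is singular. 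So I would split the argument along the smooth/singular dichotomy.

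For smooth $W$ (cases (i)--(iii)), since $\phi$ is a finite degree-$2$ morphism with $W$ smooth and $X$ normal Gorenstein, it is a flat double cover, determined by its building data $(\mathcal{L},B)$ with $B\in|2\mathcal{L}|$ and $\phi_*\mathcal{O}_X=\mathcal{O}_W\oplus\mathcal{L}^{-1}$. The key computation is to pin down $\mathcal{L}$: combining $K_X=\phi^*(K_W+\mathcal{L})$ with $K_X=\phi^*\mathcal{O}_W(1)$ gives $\mathcal{L}=\mathcal{O}_W(1)-K_W$, whence $B\in|2\mathcal{O}_W(1)-2K_W|$. Substituting the hyperplane class and $K_W$ in each case yields $B\in|\mathcal{O}_{\mathbb{P}^2}(8)|$, $|\mathcal{O}_{\mathbb{P}^2}(10)|$, and $|6\Delta_0+(N+3+3d)\Gamma|$ respectively, matching (i)--(iii); the constraints $N\ge 2d-3$ and the parity of $N-d-3$ are exactly those characterizing $W$ as a minimal-degree surface. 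That $B$ is reduced with only singularities of lc type is then read off from \textsection 2.3, which also provides the dictionary translating these into the admissible elliptic Gorenstein and canonical singularities of $X$.

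For the singular (cone) case (iv) I would reuse the resolution-and-blowup $\mu\colon\widehat{X}\to X$ together with the morphism $\gamma\colon\widehat{X}\to\Sigma_{N-1}$ already constructed, which exhibits $\widehat{X}$ as a double cover of the Hirzebruch surface $\Sigma_{N-1}=\Sigma_d$ (so $d=N-1$) with branch curve $B=\gamma_*(K_{\widehat{X}}-\gamma^*K_{\Sigma_{N-1}})$. The two alternatives a) and b) of the fibre analysis over the vertex produce $B\sim 4\Delta_0+4N\Gamma$ (an elliptic Gorenstein singularity) and $B\sim 6\Delta_0+4N\Gamma$ (canonical singularities); the intersection numbers $\Delta_0 B=4$ and $\Delta_0 B=6-2N$ then force the finitely many admissible values of $N$ and the precise singularity types collected in Table~\ref{covering table}. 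Passing back from $\widehat{X}$ to $X$ amounts to contracting $\gamma^{-1}(\Delta_0)$, the fibre over $v$, so $X$ is recovered from $(\Sigma_d,B)$ by exactly this contraction, as asserted.

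The main obstacle is not any single inequality but the bookkeeping of the cone case: one must verify that the contraction of $\gamma^{-1}(\Delta_0)$ yields a surface whose canonical class is genuinely Cartier and ample (not merely $\mathbb{Q}$-Cartier), and that the only curve singularities of $B$ compatible with $X$ being lc are those tabulated, through the correspondence of \textsection 2.3 between lc-type plane-curve singularities and elliptic Gorenstein or $ADE$ surface singularities. Finally, to show the characterization is an exact list rather than an upper bound, I would note that every entry genuinely occurs, by exhibiting a general $B$ in each linear system and invoking the converse statement established earlier in the section, which guarantees that the resulting double cover is a normal Gorenstein stable surface with $K_X^2=2p_g-4$.
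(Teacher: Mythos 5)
Your proposal follows essentially the same route as the paper: reduce via the theorem of \textsection 3 to a base-point-free, degree-$2$ canonical morphism onto a surface of minimal degree, invoke Nagata's classification, compute the building data $\mathcal{L}=\mathcal{O}_W(1)-K_W$ of the flat double cover in the smooth cases, and redo the vertex-fibre analysis through $\gamma\colon\widehat{X}\to\Sigma_{N-1}$ in the cone case to recover Table \ref{covering table}. One small misattribution: the condition $N\ge 2d-3$ in case (iii) is not part of Nagata's characterization of minimal-degree surfaces (that is the non-negativity and parity of $N-d-3$); it is forced by the reducedness of $B$, namely $\Delta_0\cdot(B-\Delta_0)=N+3-2d\ge 0$ when $\Delta_0\subseteq B$ --- a fact you do establish separately via \textsection 2.3, so this does not affect the correctness of the argument.
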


	\begin{table}
		\resizebox{\textwidth}{!}{   
			\begin{tabular}{|c|c|c|c|c|}
				\hline
				$\Sigma_{N-1}$ & $B$ & $\Delta_0\subset B$ ?  & \tabincell{c}{relation of \\$\Delta_0$ and $B$}&\tabincell{c}{ singularity type\\ of $\Psi^{-1}(p)$}\\
				\hline
				$\Sigma_2$& $|6\Delta_0+12\Gamma|$ & No & $B\cdot\Delta_0=0$ & two $A_1$  \\
				\hline
				$\Sigma_2$& $|6\Delta_0+12\Gamma|$& Yes & \tabincell{c}{ $(B-\Delta_0)\cap\Delta_0$ consists of\\ either two nodes, \\ an $A_2$ singularity,\\ or an $D_{n+3}$  singularity on $B$, $0\le n\le 9$} & $A_3$,  $D_{4}$ or $D_{n+4}$, $0\le n\le 9$ \\
				\hline
				$\Sigma_3$& $|6\Delta_0+16\Gamma|$ & Yes & $(B-\Delta_0)\cdot\Delta_0=1$ & $A_2 $\\
				\hline
				$\Sigma_4$& $|6\Delta_0+20\Gamma|$& Yes & $(B-\Delta_0)\cdot\Delta_0=0$ & $A_1$ \\
				\hline
				\tabincell{c}{$\Sigma_{N-1}$,\\ $N\ge 3$} & $|4\Delta_0+4N\Gamma|$ & No & \tabincell{c}{$B\cap\Delta_0$ consists of\\ 4 distinct points}& simple elliptic singularity\\
				\hline
				\tabincell{c}{$\Sigma_{N-1}$,\\ $N\ge 3$} & $|4\Delta_0+4N\Gamma|$ & No & \tabincell{c}{$B\cap\Delta_0$ consists of\\ two at worst $A_{n}$ singularities on $B$}& cusp elliptic singularity\\
				\hline
		\end{tabular}}
		\caption{classifications of surfaces in case (iv)}
		\label{covering table}
	\end{table}
	
	\begin{remark}
		Stable Horikawa surfaces with canonical image $W$ smooth  can be deformed into a canonical surface by deforming the branched curve $B$. Hence such stable surfaces are degenerations of canonical surfaces.
		Stable Horikawa  surfaces of case (i) have been studied in \cite{ant19}.
		Stable Horikawa  surfaces of case (ii) (iii) have been studied in \cite{RR22}.
	\end{remark}
	
	\begin{remark}
		When the canonical image $W$ of a stable Horikawa surface is singular (i.e. $W$ is a cone $C_{N-1}$), the vertex will produce an elliptic Gorenstein singularity of degree $2(N-1)$ on $X$, where $N=p_g-1$. Those stable Horikawa surfaces with cusp elliptic singularities as the preimage of the vertex are degenerations of those with simple elliptic singularities as the primage of the vertex.
		It is known that a simple elliptic singularity of degree $\ge 10$ is not smoothable (see \cite{pink74} {Chapter 7}). Hence the stable surfaces with such elliptic singularities form a distinguished irreducible component different from that of canonical surfaces in the moduli. Moreover, the number of moduli is $\mathrm{dim}|B|-\mathrm{dim} \mathrm{Aut}\Sigma_{N-1}=10N+14-(N-1+5)=9N+10=9p_g+1$.  
	\end{remark}

	\section{stable surfaces with $2p_g-4<K_X^2<2p_g-3$}
	
	\begin{lemma}\label{simplereduction}
		Let $X$ be a Gorenstein (i.e $K_X$ Cartier) lc surface, and $\ell$ be a reduced and irreducible curve on it. 
		
		If $ K_X \ell<0$ and $\ell^2<0$, then $\ell$ passes no elliptic Gorenstein singularity of $X$.
		%
	\end{lemma}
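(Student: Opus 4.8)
The plan is to argue by contradiction: assume $\ell$ passes through an elliptic Gorenstein singularity $p$ of $X$, and derive $\ell^2\ge 0$, contradicting the hypothesis $\ell^2<0$.

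First I would pass to the minimal resolution $\pi\colon Y\to X$. Since $X$ is Gorenstein and lc, the classification in \ref{clssoflc} leaves only $ADE$ (canonical) and elliptic Gorenstein singularities, the quotient types being non-Gorenstein. The $ADE$ points contribute discrepancy $0$, while by \textsection\ref{fundmtclcle} each elliptic Gorenstein point $q$ enters with discrepancy $-1$, its reduced canonical cycle $Z_q$ appearing with coefficient $+1$. Hence the discrepancy formula reads
\[
\pi^*K_X \;=\; K_Y + \sum_q Z_q,
\]
the sum running over the elliptic Gorenstein points of $X$. Writing $\tilde\ell$ for the strict transform of $\ell$ (irreducible, so $p_a(\tilde\ell)\ge 0$, on the smooth surface $Y$), the projection formula gives
\[
K_X\ell \;=\; \pi^*K_X\cdot\tilde\ell \;=\; K_Y\tilde\ell + \sum_q Z_q\tilde\ell.
\]

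Next I would bound the cycle term from below. Since $\ell$ passes through $p$, the curve $\tilde\ell$ meets $\mathrm{Supp}(Z_p)=\pi^{-1}(p)$; as each $Z_q$ is effective and $\tilde\ell$ is not $\pi$-exceptional, every $Z_q\tilde\ell$ is a non-negative integer and $Z_p\tilde\ell\ge 1$, whence $\sum_q Z_q\tilde\ell\ge 1$. Combined with the identity above and the hypothesis $K_X\ell<0$, this yields $K_Y\tilde\ell\le K_X\ell-1<-1$, i.e. $K_Y\tilde\ell\le -2$. Feeding this into adjunction on $Y$,
\[
\tilde\ell^2 \;=\; 2p_a(\tilde\ell)-2-K_Y\tilde\ell \;\ge\; -2-K_Y\tilde\ell \;\ge\; 0 .
\]

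Finally I would translate this back to $X$. Writing $\pi^*\ell=\tilde\ell+D$ with $D$ the $\pi$-exceptional $\mathbb{Q}$-divisor determined by $\pi^*\ell\cdot E=0$ on exceptional curves, negative definiteness of the exceptional intersection form forces $D\ge 0$, and since $\tilde\ell$ shares no component with $D$ we get $D\tilde\ell\ge 0$. Therefore, using $\pi^*\ell\cdot D=0$,
\[
\ell^2 \;=\; (\pi^*\ell)^2 \;=\; (\tilde\ell+D)\tilde\ell \;=\; \tilde\ell^2 + D\tilde\ell \;\ge\; \tilde\ell^2 \;\ge\; 0,
\]
contradicting $\ell^2<0$. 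I expect the only delicate points to be the two monotonicity inputs — that $\tilde\ell$ meets the whole fibre $\pi^{-1}(p)$, so that $Z_p\tilde\ell\ge 1$, and that $D$ is effective, so that passing from $\tilde\ell^2$ to $\ell^2$ can only raise the self-intersection; both rest on standard properties of the negative definite exceptional configuration of a resolution. The essential structural input, guaranteed by the Gorenstein hypothesis, is that the elliptic Gorenstein points carry discrepancy exactly $-1$, so that their canonical cycles appear with coefficient $+1$ in $\pi^*K_X$ and push $K_X\ell$ strictly above $K_Y\tilde\ell$.
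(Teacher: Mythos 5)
Your proof is correct and rests on exactly the same ingredients as the paper's: the discrepancy formula $\pi^*K_X=K_Y+\sum_q Z_q$ on the minimal resolution with the elliptic cycles entering with coefficient one, the effectivity of the exceptional part of $\pi^*\ell$, adjunction together with $p_a(\tilde\ell)\ge 0$, and the integrality of $K_X\ell$ (from $K_X$ Cartier) and of $Z_p\tilde\ell$. The paper merely packages the identical computation as a direct estimate $0\le p_a(\bar\ell)<1+\tfrac{1}{2}\bigl(-\bar\ell G-1-\bar\ell\sum Z_i\bigr)$ forcing $\bar\ell\sum Z_i=0$, whereas you run it as a contradiction; the two are equivalent.
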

	\begin{proof}
		Let $\epsilon \colon Y\to X$ be a resolution of singularities. Then $\epsilon^*K_X=K_Y+\sum Z_i$, where $Z_i$'s are the elliptic cycle associated with the elliptic Gorenstein singularities. 
		$\epsilon^*\ell=\bar{\ell}+G$ where $G$ is an effective $\mathbb{Q}$-divisor supported on the exceptional locus. 
		We have 
		\begin{align*}
			0>&\ell K_X=\bar{\ell}\epsilon^*K_X=\bar{\ell}K_Y+\bar{\ell}\sum Z_i,\\
			0>&\ell^2=\bar{\ell}\epsilon^*\ell=\bar{\ell}(\bar{\ell}+G).  
		\end{align*}
		Hence we have
		\begin{align*}
			0\le p_a(\bar{\ell})&=1+\frac{\bar{\ell}^2+\bar{\ell}K_Y}{2}\\
			&=1+\frac{{\ell}^2-\bar{\ell}G+\ell K_X-\bar{\ell}\sum Z_i}{2}\\
			&< 1+\frac{-\bar{\ell}G-1-\bar{\ell}\sum Z_i}{2}.
		\end{align*}
		Hence $p_a(\bar{\ell})=0$, $\bar{\ell}\sum Z_i=0$. Therefore $\ell$ passes no elliptic Gorenstein singularity.
	\end{proof}
	
	\begin{lemma}\label{blowdown}
		Let $X$ be a Gorenstein normal lc surface. We assume that
		\begin{itemize}
			\item [-]	$K_{X}=L+G$, where $G$ is an effective Cartier divisor and $L$ is a Cartier divisor such that $LG=0$;
			\item [-] $G$ passes no $ADE$ singularities;
			\item [-] the intersection matrix of the support of $G$ is negative definite.   
		\end{itemize}
		Then $G$ is contained in the smooth locus of $X$. Moreover, $G$ is a sum of pullbacks of (-1)-curves, i.e. there is $\eta\colon X\to Y$ which is a composition of blowups $\eta_i$, $i=1,...,m$ such that $ G=\sum\limits^m_{i=1} \mathcal{E}_i$ and $ L=\eta^*K_Y$, where $\mathcal{E}_i$ is the pullback of $(-1)$ curve of $\eta_i$. 
		
	\end{lemma}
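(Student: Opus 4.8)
The plan is to contract the components of $G$ one at a time by Castelnuovo's criterion, running an induction on the number of irreducible components of $\mathrm{Supp}(G)$. If $G=0$ we simply take $\eta=\mathrm{id}$ and $L=K_X$, so assume $G\neq 0$.

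First I would locate a $(-1)$-curve inside $\mathrm{Supp}(G)$. Since $LG=0$ we have $K_XG=(L+G)G=G^2$, and $G^2<0$ because the intersection matrix of $\mathrm{Supp}(G)$ is negative definite and $G\neq 0$; hence some component $E$ of $G$ satisfies $K_XE<0$. Negative definiteness also gives $E^2<0$, so Lemma \ref{simplereduction} applies and $E$ passes through no elliptic Gorenstein singularity; as $G$ (hence $E$) avoids the $ADE$ singularities by hypothesis, $E$ lies in the smooth locus of $X$. On the smooth locus adjunction reads $2p_a(E)-2=E^2+K_XE\le -2$ with both summands negative integers, forcing $p_a(E)=0$ and $E^2=K_XE=-1$. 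Thus $E$ is a $(-1)$-curve in the smooth locus, and it can be contracted, $\eta_1\colon X\to X_1$, with $X_1$ again normal Gorenstein and lc and $K_X=\eta_1^*K_{X_1}+E$.

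Next I would descend the data and set up the induction. The key point is $L\cdot E=0$: this is where the positivity of $L$ enters, since $LG=0$ together with $L\cdot G_j\ge 0$ on every component forces $L\cdot G_j=0$ for all $j$, in particular for $E$. Granting $LE=0$, the divisor $L$ descends as $L=\eta_1^*L_1$ with $L_1:=\eta_{1*}L$ Cartier (the image point being smooth), and $G_1:=\eta_{1*}G=K_{X_1}-L_1$ is then an effective Cartier divisor with $K_{X_1}=L_1+G_1$ and $L_1G_1=0$. The support of $G_1$ is $\eta_1(\mathrm{Supp}(G)\setminus E)$, whose intersection matrix is the Schur complement of the $E$-entry in the negative definite matrix of $\mathrm{Supp}(G)$, hence again negative definite; and $G_1$ still avoids the $ADE$ singularities of $X_1$ (the new point being smooth). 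Thus $(X_1,L_1,G_1)$ satisfies all the hypotheses with one fewer component, and by induction there is a composition of smooth blow-downs $\eta'\colon X_1\to Y$ with $G_1=\sum_{i\ge 2}\mathcal{E}_i$ and $L_1=\eta'^*K_Y$. Setting $\eta=\eta'\circ\eta_1$ and $\mathcal{E}_1=E$, one gets $G=\eta_1^*G_1+E=\sum_i\mathcal{E}_i$ and $L=\eta_1^*L_1=\eta^*K_Y$, as required.

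The main obstacle is the step $LE=0$, equivalently $L\cdot G_j=0$ for every component of $G$. It is immediate when $L$ is nef---which is the situation in the intended application, where $L$ is the base-point-free movable part of $|K_{\widetilde X}|$---because then each $L\cdot G_j\ge 0$ and their positively weighted sum $LG$ vanishes. Extracting it purely from the stated hypothesis $LG=0$ would instead require combining the adjunction constraints on the individual components with the negative definiteness of the configuration, and I expect this bookkeeping, rather than the contraction mechanism itself, to be the delicate part. The remaining verifications---preservation of negative definiteness under contraction, via the intersection update $\bar C_i\bar C_j=C_iC_j+(C_iE)(C_jE)$ together with the Schur complement argument, and the fact that blowing down a $(-1)$-curve in the smooth locus keeps the surface Gorenstein and lc---are routine.
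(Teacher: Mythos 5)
Your proposal follows essentially the same route as the paper's proof: locate a component $C$ of $G$ with $K_XC<0$ via $K_XG=G^2<0$, rule out elliptic Gorenstein points by Lemma \ref{simplereduction} and $ADE$ points by hypothesis, conclude by adjunction that $C$ is a $(-1)$-curve in the smooth locus, contract it and induct. If anything you are more careful than the paper: the step you flag as delicate, $L\cdot C=0$ for each component $C$ of $G$, is genuinely not a formal consequence of the stated hypotheses (one can arrange $LG=0$ with the individual products nonzero, e.g.\ a $(-1)$-curve meeting a $(-2)$-curve), yet it is needed both for the induction to preserve $L_1G_1=0$ and for the final identity $L=\eta^*K_Y$; the paper's own proof passes over this silently. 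As you observe, the point is harmless in every application of the lemma, where $L$ is nef (a pullback of a base-point-free movable part), so that $LG=0$ with $G$ effective forces $L\cdot C=0$ componentwise; adding nefness of $L$ to the hypotheses would make both your argument and the paper's complete.
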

	\begin{proof}
		Since the intersection matrix of the support of $G$ is negative definite,  $GK_{X}=G^2<0$. Write $G=\sum b_iC_i$. There exists  $C_i$ such that $C_iK_{X}<0$. By Lemma \ref{simplereduction}, $C_i$ passes no elliptic Gorenstein singularities. By assumption $C_i$ passes no $ADE$ singularities and then $C_i$ is contained in the smooth locus of $X$. Therefore, $C_i$ is a (-1) curve. Let $\eta_1\colon X \to Y_1$ be the blowdown of $C_i$. We have $K_{Y_1}=\eta_1( L)+\eta_1(G)$. Inductively we obtain blowdowns $\eta_j\colon Y_{j-1}\to Y_{j}$, $j=2,..., m$ and finally $\eta_1{}_{\circ}...{}_{\circ}\eta_m (G)$ is zero-dimensional. We have  $K_{Y_m}=\eta_1{}_{\circ}...{}_{\circ}\eta_m ( L)$. Let $Y=Y_m$ and  $\eta=\eta_1{}_{\circ}...{}_{\circ}\eta_m$. We have $ L=\eta^*K_Y$ and $G=\sum\limits^k \mathcal{E}_i$, where $\mathcal{E}_i$ is the pullback of $(-1)$ curve of $\eta_i$.  
	\end{proof}

	\begin{lemma}
		Let $X$ be a normal stable surface with $2p_g-4<K_X^2< 2p_g-3$. Then $\phi_{|K_{X}|}$ is not birational.
	\end{lemma}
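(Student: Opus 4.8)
The plan is to argue by contradiction: assume $\phi_{|K_X|}$ is birational and then play the Castelnuovo-type bound of Lemma~\ref{castelnuovo} against the standing hypothesis $K_X^2<2p_g-3$. First I would dispose of the trivial situation: if $|K_X|$ is composed with a pencil then $\phi_{|K_X|}$ maps $X$ to a curve and cannot be birational, so there is nothing to prove. Hence I may assume $|K_X|$ is not composed with a pencil, so that the canonical image $W=\phi_{|K_X|}(X)\subset\mathbb{P}^{p_g-1}$ is a nondegenerate surface; in particular $p_g-1\ge 2$, i.e. $p_g\ge 3$. Now Lemma~\ref{castelnuovo} gives $K_X^2\ge 3p_g-7$ whenever $\phi_{|K_X|}$ is birational. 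Combining this with $K_X^2<2p_g-3$ yields $3p_g-7<2p_g-3$, that is $p_g<4$. Since $p_g\ge 3$, the only case that survives is $p_g=3$.

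It then remains to rule out $p_g=3$, and this is where the genuine work lies. When $p_g=3$ the ambient space is $\mathbb{P}^2$, so the only two-dimensional nondegenerate image is $W=\mathbb{P}^2$ itself, which has degree $1$. Running the notation of \textsection\ref{ncp}, with $\phi_{|L|}\colon\bar{X}\to W$ and $L=\phi_{|L|}^*\mathcal{O}_W(1)$, birationality of $\phi_{|K_X|}$ forces $\deg\phi_{|L|}=1$, whence $L^2=(\deg\phi_{|L|})(\deg W)=1$. On the other hand, the proof of Lemma~\ref{castelnuovo} established the sharper inequality $K_X^2\ge L^2\ge 3p_g-7$, which for $p_g=3$ reads $L^2\ge 2$. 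The numerical clash $1=L^2\ge 2$ is absurd, so no birational $\phi_{|K_X|}$ exists when $p_g=3$ either. (Equivalently, a birational $\phi_{|K_X|}$ onto $\mathbb{P}^2$ would make $\widetilde{X}$ a rational surface, contradicting $p_g(\widetilde{X})=p_g(X)=3\ne 0=p_g(\mathbb{P}^2)$, since $p_g$ is a birational invariant; I would include this as a conceptual cross-check.)

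The main obstacle is precisely the borderline value $p_g=3$. For every $p_g\ge 4$ the inequality $3p_g-7\ge 2p_g-3$ holds, so Lemma~\ref{castelnuovo} alone contradicts $K_X^2<2p_g-3$ and the argument is immediate. But at $p_g=3$ the Castelnuovo bound only gives $K_X^2\ge 2$, which is fully compatible with the allowed range $2<K_X^2<3$ and therefore produces no contradiction on its own. The decisive extra input is the exact degree of the canonical image, $\deg W=1$ for $W=\mathbb{P}^2$, which pins down $L^2=1$ and refines the soft estimate $K_X^2\ge L^2\ge 2$ into the sharp impossibility. Everything outside this single case is routine bookkeeping with the linear inequality relating $3p_g-7$ and $2p_g-3$.
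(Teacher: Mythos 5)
Your argument is correct, and for the decisive case $p_g=3$ it takes a genuinely different and much shorter route than the paper. Both proofs start identically: Lemma~\ref{castelnuovo} forces $3p_g-7\le K_X^2<2p_g-3$, hence $p_g=3$. At that point the paper only extracts $L^2=2$ from $2\le L^2\le K_X^2<3$ and then works considerably harder: it shows $LG=0$, invokes Lemma~\ref{firstineq} to get $\epsilon=\mathrm{id}$, resolves the $ADE$ singularities on $G$, applies Lemma~\ref{blowdown} to contract $G$ and produce a surface $Y$ with $K_Y$ Cartier, base-point-free and birational onto $\mathbb{P}^2$, and finally derives a contradiction from a $(-1)$-curve on the minimal resolution of $Y$. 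You instead observe that birationality pins down $L^2$ exactly: the image $W$ is a two-dimensional closed subvariety of $\mathbb{P}^2$, hence $W=\mathbb{P}^2$ with $\deg W=1$, and $\deg\phi_{|L|}=1$, so $L^2=\deg\phi_{|L|}\cdot\deg W=1$ --- the same projection-formula computation the paper itself uses in the non-birational case --- contradicting the bound $L^2\ge 3p_g-7=2$ established in the proof of Lemma~\ref{castelnuovo}. This bypasses Lemmas~\ref{simplereduction} and~\ref{blowdown} entirely, and you are right that you need the intermediate inequality $L^2\ge 3p_g-7$ rather than merely the statement $K_X^2\ge 3p_g-7$; the paper's proof (and its own citation of the lemma here) does supply that. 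One caveat: your parenthetical ``cross-check'' via rationality is not airtight as stated, because $\widetilde{X}$ may carry elliptic Gorenstein singularities, so $h^0(\widetilde{X},K_{\widetilde{X}})$ is not a birational invariant (a rational surface with such singularities can have $h^0(K)>0$); since your main argument does not rely on it, this does not affect the proof.
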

	\begin{proof}
		Assume $\phi_{|K_{X}|}$ is birational for a contradiction. By Lemma \ref{castelnuovo}, we have
		$K_X^2\ge L^2\ge 3p_g-7=2p_g-4+p_g-3$. 
		As $2p_g-4<K_X^2< 2p_g-3$, we have $p_g=3$, $L^2=2$ and $3>K_X^2>L^2=2$. 
		Since $K_X^2=L(L+G-\sum a_E\epsilon^*E)+G\epsilon^*\pi^*K_X$, we have $LG=0$. Hence $\epsilon=\mathrm{id}$, $K_{\widetilde{X}}=|L|+G$ and $LG=0$. Let 
		$\mu\colon  \widehat{X}\to \widetilde{X}$ be a resolution of $ADE$ singularities on $G$. 
		We have $K_{\widehat{X}}=\mu^*K_{\widetilde{X}}=|\mu^*L|+\mu^*G$. By Lemma \ref{blowdown}, there exists
		$\eta\colon \widehat{X}\to Y$ which is a composition of blowups, such that $\mu^*G$ is a sum of pullback of $(-1)$ curves and $K_Y=\eta(\mu^*L)$. We see that $K_Y$ is Cartier and base-point-free, and $\phi_{|K_{Y}|}$ is birational onto $\mathbb{P}^2$. 
		
		Let $\gamma\colon \widetilde{Y}\to Y$ be a minimal resolution and $\theta=\phi_{|K_{Y}|}{}_{\circ}\gamma$. 
		\[
		\xymatrix@C=3em@R=10ex{
			X &\widetilde{X}\ar[l]_{\pi}\ar[dr]_{\phi_{|L|}}&\widehat{X}\ar[l]_{\mu}\ar[d]_(0.3){\phi_{|\mu^*L|}}\ar[r]^{\eta}&	Y \ar[dl]^(0.3){\phi_{|K_{Y}|}}  & \widetilde{Y}\ar[l]_{\gamma}\ar@/^2pc/[dll]^{\theta}  \\
			&&\mathbb{P}^{2}	
		}
		\]
		
		If $\theta=\mathrm{id}$, then $\phi_{|K_{Y}|}=\mathrm{id}$ and $K_Y=K_{\mathbb{P}^2}=\mathcal{O}_{\mathbb{P}^2}(-3)$, which contradicts the fact that $\phi_{|K_{Y}|}$ is birational. 
		Therefore, $\theta$ is a succession of blowups. 
		Note that $\widetilde{X}$ has at worst $ADE$-singularities and elliptic Gorenstein singularities, and so are $\widehat{X}$ and $Y$. Moreover, the support of the excetptional locus of $\gamma$ denoted by $\mathrm{Supp}\,\mathrm{Exc}(\gamma)$ is contained in $\mathrm{Supp}\,\mathrm{Exc}(\theta)$ which consists of several trees of rational curves. Therefore, there is no elliptic Gorenstein singularity on $Y$ and	$\gamma$ resolves at worst canonical singularities. Hence $\gamma^*K_{Y}=K_{\widetilde{Y}}$. 
		Since $\theta\not=\mathrm{id}$, there is a (-1) curve $\mathcal{E}$ on $\widetilde{Y}$. We have $-1=\mathcal{E}K_{\widetilde{Y}}=\mathcal{E}\gamma^*K_{Y}=\gamma(\mathcal{E})K_{Y}$, which contradicts to the fact that $|K_Y|$ is base-point-free. 
		Therefore, $\phi_{|K_{X}|}$ is not birational.
		
	\end{proof}
	\begin{theorem}\label{ncwp}
		Let $X$ be a normal stable surface with $2p_g-4<K_X^2< 2p_g-3$ and $|K_X|$ is not composed with a pencil. 
		Then $\phi_{|K_{X}|}$ is of degree two and $K_X^2\ge  2p_g-4+\frac{1}{3}$. 
		
		If the equality holds, $X$ contains a unique $\frac{1}{3}(1,1)$-singularity $p$, whose exceptional curve $E$ is a (-3) curve. 
		Let $\pi \colon \widetilde{X}\to X$ be the resolution of $p$. We have $\pi^*K_X=K_{\widetilde{X}}+\frac{1}{3}E$. $|K_{\widetilde{X}}|$ is base-point-free and $\phi_{|K_{\widetilde{X}}|}$ is a generically finite morphism of degree 2 onto $W$ which is a surface of minimal degree.
		Let $B$ be the branched curve. We have $B\supseteq \phi_{|K_{\widetilde{X}}|}(E)$. 
		Moreover, $(W,\phi_{|K_{\widetilde{X}}|}(E))$ is one of the following:
		\begin{itemize}
			\item[a)] $(\mathbb{P}^2, \ell_0)$, where $\ell_0$ is a line. ($B\in |8\ell|$);
			\item[b)] $(\Sigma_d, \Gamma_0)$, where $\Gamma_0$ is a fiber of the ruling. ($B\in |6\Delta_0+(N+3d+3)\Gamma|$); ($N=p_g-1$)
			\item[c)] $(\Sigma_d, \Delta_0)$, where $\Delta_0$ is the zero section of $\Sigma_d$. ($B\in |6\Delta_0+(4d+6)\Gamma|$); ($N=d+3$) 
			\item[d)] $(C_{N-1},\ell_0)$, where $\ell_0$ is a line passing through the vertex of the cone. ($N\ge 3$)
		\end{itemize}
		
	\end{theorem}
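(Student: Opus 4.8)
The plan is to run the whole argument through the chain $\bar X \xrightarrow{\epsilon} \widetilde X \xrightarrow{\pi} X$ set up before Lemma \ref{firstineq}, extracting from the strict inequalities $2p_g-4 < K_X^2 < 2p_g-3$ first the degree of $\phi_{|K_X|}$, then the exact size of the defect $K_X^2-L^2$, and finally the geometry of the equality case.

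\textbf{Step 1: degree two and $L^2=2p_g-4$.} By the preceding lemma $\phi_{|K_X|}$ is not birational, so $\deg\phi_{|L|}\ge 2$. Writing $L^2=\deg\phi_{|L|}\cdot\deg W$ and recalling $\deg W\ge p_g-2$, degree $\ge 3$ would force $L^2\ge 3(p_g-2)=3p_g-6$, while Lemma \ref{firstineq} gives $L^2\le K_X^2<2p_g-3$; since $|K_X|$ is not composed with a pencil we have $p_g\ge 3$, so this is impossible. Hence $\deg\phi_{|L|}=2$ and $L^2=2\deg W$. Then $2\deg W=L^2\le K_X^2<2p_g-3$ forces $\deg W\le p_g-2$, so $\deg W=p_g-2$ is minimal and $L^2=2p_g-4$; in particular $0<K_X^2-L^2<1$.

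\textbf{Step 2: $\epsilon=\mathrm{id}$ and non-Gorenstein.} In the expansion of Lemma \ref{firstineq}, $K_X^2-L^2=LG-\sum a_E(\epsilon^*E\cdot L)+G\cdot\epsilon^*\pi^*K_X$ is a sum of three nonnegative terms. Since $L$ is base-point-free and $G\ge 0$, the number $LG$ is a nonnegative integer; as the total is $<1$, we get $LG=0$, and then $\epsilon=\mathrm{id}$ by Lemma \ref{firstineq}. So on $\widetilde X$ we have $K_{\widetilde X}=L+G$ with $LG=0$ and $K_X^2-L^2=-\sum a_E(E\cdot L)+G\cdot\pi^*K_X=:T_1+T_2$. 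Moreover $K_X$ cannot be Cartier: if it were, then $\sum a_E E=0$ and $T_2=G\cdot\pi^*K_X$ would be an integer (both factors Cartier), contradicting $0<K_X^2-L^2<1$. Thus $X$ carries a non-Gorenstein lc singularity, and by the Remark of \textsection\ref{clssoflc} its resolution contains a rational exceptional curve $E$ with $E^2=-d$, $d\ge 3$.

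\textbf{Step 3: the bound $\tfrac13$ and equality.} This is the heart of the matter. For the curve $E$ above, Remark \ref{discrep} gives $-a_E\ge\frac{d-2}{d}\ge\frac13$, so once we know $E\cdot L\ge 1$ we obtain $T_1\ge -a_E(E\cdot L)\ge\frac13$, hence $K_X^2\ge 2p_g-4+\frac13$. I expect the main obstacle to be exactly this non-contraction statement $E\cdot L\ge 1$: a priori $E$ might be contracted by $\phi_{|L|}$ to a singular point of $W$, which, $W$ being of minimal degree, can only be the vertex of the cone $C_{p_g-2}$; I would rule this out, or else handle it directly through the cone/sublinear-system analysis of \textsection 4, so that the minimal-discrepancy curve always meets $L$. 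Granting $E\cdot L\ge 1$, equality $K_X^2=2p_g-4+\frac13$ forces $T_1=\frac13$ and $T_2=0$. From $T_1=\frac13$ and the inequality $-a_E(E\cdot L)\ge\frac13$ one reads off $d=3$, $E\cdot L=1$, and $-a_E=\frac13$; the equality clause of Remark \ref{discrep} then makes $E$ the \emph{only} exceptional curve over its point, so $X$ has a unique $\frac13(1,1)$ singularity with $\pi^*K_X=K_{\widetilde X}+\frac13 E$ and no further non-Gorenstein contribution. From $T_2=G\cdot\pi^*K_X=0$ with $K_X$ ample, $G$ is $\pi$-exceptional, hence supported on $E$; combined with $LG=0$ and $L\cdot E=1$ this gives $G=0$. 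Therefore $|K_{\widetilde X}|=|L|$ is base-point-free and $\phi_{|K_{\widetilde X}|}$ is the degree-two morphism onto the minimal-degree surface $W$.

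\textbf{Step 4: classifying $(W,\phi_{|K_{\widetilde X}|}(E))$.} Since $L\cdot E=1$, the image $\ell_0:=\phi_{|K_{\widetilde X}|}(E)$ is a degree-one curve, i.e.\ a line of $\mathbb{P}^{p_g-1}$ lying on $W$, and $E\to\ell_0$ is birational. I would first check $\ell_0\subseteq B$: the covering involution must fix $E$, since otherwise its image would be a second $(-3)$-curve over $\ell_0$, producing either a second $\frac13(1,1)$ (against uniqueness) or a curve on which $\pi^*K_X=K_{\widetilde X}-\frac13 E$ is negative (against ampleness); hence $E$ lies in the ramification and $\ell_0=f(E)\subseteq B$. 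It then remains to enumerate the lines on a minimal-degree surface: the Veronese contains none and is excluded; on $\mathbb{P}^2$ the line $\ell_0$ gives case~a); on $\Sigma_d$ embedded by $|\Delta_0+\frac{N-1+d}{2}\Gamma|$ the lines are the fibers $\Gamma_0$ (always of degree one, case~b) and the section $\Delta_0$, of degree $\frac{N-1-d}{2}$, a line precisely when $N=d+3$ (case~c); on the cone $C_{N-1}$ the lines are the rulings through the vertex (case~d). Finally, solving $K_{\widetilde X}=f^*(K_W+\mathcal{L})=f^*\mathcal{O}_W(1)$ for $\mathcal{L}=\mathcal{O}_W(1)-K_W$ and using $B\in|2\mathcal{L}|$ produces the displayed branch classes in each case, completing the proof.
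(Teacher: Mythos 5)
Your Steps 1, 2 and 4 track the paper's own proof closely and are essentially correct: Step 1 via the degree count against $L^2<2p_g-3$, Step 2 via integrality of $LG$ and of $G\cdot K_X$ in the Cartier case, Step 4 via the enumeration of lines on minimal-degree surfaces and the computation of $B\in|2(\mathcal{O}_W(1)-K_W)|$. The problem is Step 3, and you have located it yourself: the entire content of the bound $K_X^2\ge 2p_g-4+\frac{1}{3}$ is the positivity of the discrepancy term $-\sum a_E(E\cdot L)$, and you do not prove it --- ``I would rule this out, or else handle it directly'' is a placeholder for what is in fact the longest and most delicate part of the paper's argument. Moreover the statement you propose to establish is not the right one: you want the particular exceptional curve $E$ with $-E^2=d\ge 3$ to satisfy $E\cdot L\ge 1$, but a curve with $EL=0$ is contracted by $\phi_{|L|}$ to a point of $W$ that need not be the vertex of a cone ($\phi_{|L|}$ is only generically finite and can contract curves to smooth points of $W$, e.g.\ over singular points of the branch curve), so ``only the vertex is available'' is not a viable way to rule it out. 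What the paper actually proves is the contrapositive-flavored statement: if every exceptional curve $E$ with $a_E\ne 0$ had $EL=0$, then --- after resolving the $ADE$ points on $G$ via $\mu$ and contracting $\widehat{G}$ via Lemma \ref{blowdown} to a surface $Y$ with $K_Y=\eta_*(\mu^*L)$ nef and big --- each $\eta(\widehat{E})$ would be a point or a $(-2)$-curve, so $\widehat{G}-\sum a_{\widehat{E}}\widehat{E}$ would have negative semi-definite support, and the identity $0\le \theta^*K_X(\widehat{G}-\sum a_{\widehat{E}}\widehat{E})=(\widehat{G}-\sum a_{\widehat{E}}\widehat{E})^2\le 0$ (with $\theta=\pi\circ\mu$) forces $\widehat{G}=\sum a_{\widehat{E}}\widehat{E}=0$, contradicting $\pi\ne\mathrm{id}$. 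Only then, for a curve $E'$ with $E'L>0$, does one get $E'K_{\widetilde{X}}\ge E'L>0$, hence $E'^2\le -3$ by rationality, hence $-a_{E'}\ge\frac{1}{3}$ by Remark \ref{discrep}: the property $d\ge 3$ is deduced for the curve that meets $L$, not assumed for a curve chosen in advance. None of this machinery appears in your proposal, so the inequality is unproven as written.

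A smaller omission in the same vein: in the equality case you assert there is ``no further non-Gorenstein contribution,'' but ruling out additional non-Cartier singular points (all of whose exceptional curves satisfy $E''L=0$) again requires the argument $0=(-\sum_{E\ne E'}a_EE)\cdot\pi^*K_X=(-\sum_{E\ne E'}a_EE)^2$, which you do not supply. (There is also a sign slip in Step 4: $\pi^*K_X=K_{\widetilde{X}}+\frac{1}{3}E$, not $K_{\widetilde{X}}-\frac{1}{3}E$.)
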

	\begin{proof}
		By the previous lemma, $\phi_{|K_{X}|}$ is not birational. Hence, we have 
		\begin{align*}
			2p_g-3> K_X^2=&(L+G)\epsilon^*\pi^*K_X\\
			=&L(L+G-\sum a_EE)+G\epsilon^*\pi^*K_X\\
			\ge & L^2+LG\ge L^2\\
			=&\mathrm{deg}\phi_{|L|}\mathrm{deg}W\\
			\ge &2(p_g-2).
		\end{align*}
		
		
		The assumption $2p_g-4<K_X^2< 2p_g-3$ implies $L^2=2(p_g-2)$, $LG=0$, and $0<-\sum a_EEL+G\epsilon^*\pi^*K_X<1$. Hence $\epsilon=\mathrm{id}$ by Lemma 3.1, $\mathrm{deg}\phi_{|L|}=2$ and $W$ is a surface of minimal degree. 
		Moreover, 
		we claim that $-\sum a_EE\not=0$, i.e. $\pi\not=\mathrm{id}$. Otherwise,  $K_X$ is Cartier and then $GK_X$ will be an integer, which contradicts to $0<GK_X<1$. 
		
		Let 
		$\mu\colon  \widehat{X}\to \widetilde{X}$ be a resolution of $ADE$ singularities on $G$. 
		We have $K_{\widehat{X}}=\mu^*K_{\widetilde{X}}=|\mu^*L|+\mu^*G$. By Lemma \ref{blowdown}, there exists
		$\eta\colon \widehat{X}\to Y$ which is a composition of blowups, such that $\mu^*G$ is a sum of pullback of $(-1)$ curves and $K_Y=\eta_*(\mu^*L)$.
		We introduce $\theta\colon=\pi_{\circ}\mu$ to  rewrite $\theta^*K_X=K_{\widehat{X}}-\sum a_{\widehat{E}} \widehat{E}=\widehat{L}+\widehat{G}-\sum a_{\widehat{E}}\widehat{E}$ where $\widehat{L}=\mu^*L$, $\widehat{G}=\mu^*G$ and $\widehat{E}=\mu^*E=\mu^{-1} (E)$.  We have the following commutative diagram:
		\[
		\xymatrix{
			\widehat{X}\ar[r]^{\mu} \ar[d]_{\eta} \ar@(ur,ul)[rr]^{\theta}& \widetilde{X}\ar[r]^{\pi}\ar[d]_{\varphi_{|L|}}\ar[dr]^{\phi_{|L|}} & X\ar@{-->}[d]^{\phi_{|K_X|}}\\
			Y\ar@/_2pc/[rr]_{\phi_{|K_{Y}|}}\ar[r]^{\varphi_{|K_{Y}|}} &W\ar@{^(->}[r]  &\mathbb{P}^{p_g-1}	
		}
		\]
		
		We show that $-\sum a_{\widehat{E}}\widehat{E}\widehat{L}\not =0$. 
		Otherwise, $\widehat{E}\widehat{L} =\widehat{E}\eta^*K_Y=\eta(\widehat{E})K_Y=0$ for each $\widehat{E}\in \mathrm{supp}(-\sum a_{\widehat{E}}\widehat{E})$. Hence $\eta(\widehat{E})$ is either a point or a (-2) curve on $Y$. 
		Therefore, $\mathrm{supp}(-\sum a_{\widehat{E}}\eta(\widehat{E}))$ either consists of (-2) curves or is zero dimensional. 
		Since $K_Y$ is nef and big, a sum of (-2) curves on $Y$ is always contractible (i.e., the intersection matrix is negative definite). 
		As a result, $\mathrm{supp}(\hat{G}-\sum a_{\widehat{E}}\widehat{E})$ is contractible. Therefore, we have $(\hat{G}-\sum a_{\widehat{E}}\widehat{E})^2\le 0$. 
		Since $\hat{G}-\sum a_{\widehat{E}}\widehat{E}\ge 0$ and $\theta^*K_X$ is nef,  we have $\theta^*K_X(\hat{G}-\sum a_{\widehat{E}}\widehat{E})\ge 0$. 
		As
		$\theta^*K_X=\hat{L}+\hat{G}-\sum a_{\widehat{E}}\widehat{E}$, 
		we have 
		\begin{align*}
			0\le \theta^*K_X(\hat{G}-\sum a_{\widehat{E}}\widehat{E})=&(\hat{L}+\hat{G}-\sum a_{\widehat{E}}\widehat{E})(\hat{G}-\sum a_{\widehat{E}}\widehat{E})\\
			=&(\hat{G}-\sum a_{\widehat{E}}\widehat{E})^2\le 0,
		\end{align*}
		As a result, we have $\hat{G}=-\sum a_{\widehat{E}}\widehat{E}=0$, a contradiction to $-\sum a_EE\not=0$.

		Now assume $\widehat{E}'\widehat{L}>0$ for some $\widehat{E}'$. Then $\widehat{E}'\widehat{L}=\widehat{E}'\eta^*K_Y=\eta(\widehat{E}')K_Y>0$, hence $\eta(\widehat{E}')$ is not a point. We see that $ \widehat{E}'\widehat{G}=\widehat{E}' \sum \mathcal{E}_k\ge 0$ and $\widehat{E}'K_{\widetilde{X}}=\widehat{E}'\widehat{L}+\widehat{E}'\widehat{G}\ge \widehat{E}'\widehat{L}>0$. Hence $\widehat{E}'^2\le -3$. 
		By Remark \ref{discrep} we have $$-a_{\widehat{E}'}=\frac{-2-\widehat{E}'^2-\sum\limits_{\widehat{E}\not=\widehat{E}'} a_{\widehat{E}}\widehat{E}\widehat{E}'}{-\widehat{E}'^2}\ge \frac{-2-\widehat{E}'^2}{-\widehat{E}'^2}\ge \frac{1}{3}.$$
		'=' holds if and only if $\widehat{E}'$ is an (-3)  curve, $\sum\limits_{\widehat{E}\not=\widehat{E}'} \widehat{E}\widehat{E}'=0$,  $\widehat{E}'\widehat{L}=1$, and $\widehat{E}'\widehat{G}=0$. 
		
		Therefore, we have 
		\begin{align*}
			K_X^2=&(\widehat{L}+\widehat{G}) \theta^*K_X\\
			=&\widehat{L}^2-\sum a_{\widehat{E}}\widehat{E}\widehat{L}+\widehat{G} \theta^*K_X\\
			\ge & 2(p_g-2)+\frac{1}{3}.
		\end{align*}
		
		Moreover, if '=' holds, $\widehat{L}^2=L^2=2(p_g-2)$, $-\sum a_{\widehat{E}}\widehat{E}\widehat{L}=\frac{1}{3}$ and $\widehat{G} \theta^*K_X=0$. 
		Since $G\pi^*K_X=\widehat{G} \theta^*K_X=0$ and $LG=0$, we see that $G=0$ by Lemma \ref{firstineq}. 
		Therefore, we see that $\mu=\eta=\mathrm{id}$ and $\widehat{L}=K_{\widetilde{X}}$. 
		On the other hand, $-\sum a_{\widehat{E}}\widehat{E}\widehat{L}=\frac{1}{3}$ implies $-\sum\limits_{\widehat{E}\not=\widehat{E}'} a_{\widehat{E}}\widehat{E}\widehat{L}=0$, $\widehat{E}'\widehat{L}=1$ and $- a_{\widehat{E}'}=\frac{1}{3}$, which implies
		$\widehat{E}'$ is an (-3)  curve, $\sum\limits_{\widehat{E}\not=\widehat{E}'} \widehat{E}\widehat{E}'=0$ and $\widehat{E}'\widehat{L}=1$. However, $-\sum\limits_{\widehat{E}\not=\widehat{E}'} a_{\widehat{E}}\widehat{E}\widehat{L}=-\sum\limits_{E\not=E'} a_{E}EK_{\widetilde{X}} =0$ and $\sum\limits_{\widehat{E}\not=\widehat{E}'} \widehat{E}\widehat{E}'=\sum\limits_{E\not=E'} EE'=0$ implies $$0=-\sum\limits_{E\not=E'} a_{E}E\cdot \pi^*K_X=(-\sum\limits_{E\not=E'} a_{E}E)^2$$ and then $-\sum\limits_{E\not=E'} a_{E}E=0$.
		Therefore, $-\sum a_{E}E=-\frac{1}{3}E'$, where $E'$ is a (-3) curve. 
		Moreover, $\widehat{L}^2=L^2=2(p_g-2)$ implies
		$\phi_{|K_{\widetilde{X}}|}$ is a generically finite morphism of degree 2 onto $W$ which is a surface of minimal degree. 
		Since
		$1=EK_{\widetilde{X}}=E\cdot\phi^*_{|K_{\tilde{X}}|} \mathcal{O}(1)=\phi_{|K_{\tilde{X}}|}{}_*E\cdot \mathcal{O}(1)=\mathrm{deg}\phi_{|K_{\tilde{X}}|}|_{E}\cdot \phi_{|K_{\tilde{X}}|}(E)\cdot \mathcal{O}(1)$, we see that $\phi_{|K_{\widetilde{X}}|}(E)\subseteq B$ and $\phi_{|K_{\tilde{X}}|}(E)$ is a line in $W$.
		Therefore,
		$(W,\phi_{|K_{\widetilde{X}}|}(E),B)$ is one of the following:
		\begin{itemize}
			\item[a)] $(\mathbb{P}^2, \ell_0)$, where $\ell_0$ is a line. $B\in |8\ell|$;
			\item[b)] $(\Sigma_d, \Gamma_0)$, where $\Gamma_0$ is a fiber of the ruling. $B\in |6\Delta_0+(N+3d+3)\Gamma|$; ($N=p_g-1$)
			\item[c)] $(\Sigma_d, \Delta_0)$, where $\Delta_0$ is the zero section of $\Sigma_d$. $B\in |6\Delta_0+(4d+6)\Gamma|$; ($N=d+3$) 
			\item[d)] $(C_{N-1},\ell_0)$, where $\ell_0$ is a line passing through the vertex of  the cone. ($N\ge 3$)
		\end{itemize}
	\end{proof}

	\begin{example}
		Let $B=\Gamma_0+B_0\in |6\Delta_0+(N+3d+3)\Gamma|$ be a nodal curve on $\Sigma_d$. Let $\bar{X}$ be a double covering surface over $\Sigma_d$ with $B$ as the branched curve and let $D$ be the preimage of $\Gamma_0$. There are 6 $A_1$ singularities on $D$ lying over $\Gamma_0\cap B_0$. Let $\epsilon\colon \widetilde{X}\to \bar{X}$ be a minimal resolution of these  6 $A_1$ singularities and $\bar{D}$ be the proper transformation of $D$. Then $\bar{D}$ is a (-3) curve. Contracting $\bar{D}$ we obtain a KSBA stable surface $X$ with $K_X^2=2p_g-4+\frac{1}{3}$ of type (b). 
	\end{example}
	\begin{tikzpicture}
		\begin{scope}[scale=0.5] 
			\draw (0,0) node[left]{$B_0$} to (3,0) to[in=270,out=0] (4,0.5) to[in=0,out=90] 
			(3,1) to (1,1) to[in=270,out=180] (0,1.5) to[in=180,out=90] (1,2) to (3,2) to[in=270,out=0] (4,2.5) to[in=0,out=90]  
			(3,3) to (1,3) to[in=270,out=180] (0,3.5) to[in=180,out=90] (1,4) to (3,4) to[in=270,out=0] (4,4.5) to[in=0,out=90] 
			(3,5) to (0,5);
			\draw (2.5,-1)  to (2.5,6) node[left]{$\Gamma_0$}; 
			\node at (4,7) {$\Sigma_d$};
			\draw[->] (6,2.5) to (5,2.5);
		\end{scope}
		\begin{scope}[scale=0.5,xshift=7cm] 
			\draw (0,0) to (3,0) to[in=270,out=0] (4,0.5) to[in=0,out=90] 
			(3,1) to (1,1) to[in=270,out=180] (0,1.5) to[in=180,out=90] (1,2) to (3,2) to[in=270,out=0] (4,2.5) to[in=0,out=90]  
			(3,3) to (1,3) to[in=270,out=180] (0,3.5) to[in=180,out=90] (1,4) to (3,4) to[in=270,out=0] (4,4.5) to[in=0,out=90] 
			(3,5) to (0,5);
			\draw (2.5,-1) to (2.5,6) node[left]{$D$}; 
			\foreach \y in {(2.5,0),(2.5,1),(2.5,2),(2.5,3),(2.5,4),(2.5,5)}
			\draw[blue] plot[mark=x, mark size=5] coordinates{\y};
			\node at (4,7) {$\bar{X}$};
			\draw[->] (7,2.5) to node[above]{$\epsilon$} (5.5,2.5);
		\end{scope}
		\begin{scope}[scale=0.5,xshift=14cm] 
			\draw (0,-0.5) to (2.5,0) .. controls (3,0.2) and (3,0.8) ..
			(2.5,1)  .. controls (2,1.2) and (2,1.8) .. (2.5,2)
			(2.5,2)  .. controls (3,2.2) and (3,2.8) .. (2.5,3)
			.. controls (2,3.2) and (2,3.8) .. (2.5,4)
			.. controls (3,4.2) and (3,4.8) .. (2.5,5)
			(2.5,5) to (0,5.5);
			\draw (5,-1) node[left]{$(-3)$} to (5,6) node[left]{$\bar{D}$}; 
			\foreach \y in {0,1,2,3,4,5}
			{\draw[dashed] (2,\y) to (6,\y);
				\node at (7,\y) {$(-2)$};}
			\node at (7,7) {$\tilde{X}$};
		\end{scope}
	\end{tikzpicture}
	\begin{example}
		Let $B=\Gamma_0+B_0\in |4\Delta_0+4N\Gamma|$ be a nodal curve on $\Sigma_{N-1}$, $N\ge 3$, where $\Gamma_0$ is a fiber of ruling. Assume that $\Delta_0\cap B$ consists of 4 different points for simplicity. Let $\bar{X}$ be a double covering surface over $\Sigma_{N-1}$ with $B$ as the branched curve and let $D,Z$ be the preimage of $\Gamma_0, \Delta_0$. There are 4 $A_1$ singularities on $D$ lying over $\Gamma_0\cap B_0$ and $DZ=1$. Let $\epsilon\colon \widetilde{X}\to \bar{X}$ be a composition of a minimal resolution of these  4 $A_1$ singularities and a blowup of $Z\cap D$, and $\bar{D},\bar{Z}$ be the proper transformation of $D,Z$. Then $\bar{D}$ is a (-3) curve and $\bar{Z}$ is an elliptic curve with $\bar{Z}^2=-(2N-1)$. Contracting $\bar{D},\bar{Z}$ we obtain a KSBA stable surface $X$ with $K_X^2=2p_g-4+\frac{1}{3}$ of type (d).
	\end{example}
	\begin{tikzpicture}
		\begin{scope}[scale=0.4] 
			\draw 
			(-2,-1) to (-2,2) to[in=180,out=90] (-1.5,2.5) to[in=90,out=0]
			(-1,2) to
			(-1,-0.5) to[in=180,out=270]  (-0.5,-1) to[in=270,out=0]
			(0,-0.5) to (0,1) to[in=180,out=90]  (0.7,2) to 
			(3,2) to[in=270,out=0] (4,2.5) to[in=0,out=90]  
			(3,3) to (1,3) to[in=270,out=180] (0,3.5) to[in=180,out=90] (1,4) to (3,4) to[in=270,out=0] (4,4.5) to[in=0,out=90] 
			(3,5) to (-1,5)node[below]{$B_0$};
			\draw (-2.5,0)  to (5,0) node[below]{$\Delta_0$}; 
			\draw (2.5,-1)  to (2.5,6) node[left]{$\Gamma_0$}; 
			\node at (5,7) {$\Sigma_{N-1}$};
			\draw[->] (6,2.5) to (5,2.5);
		\end{scope}
		\begin{scope}[scale=0.4,xshift=9cm] 
			\draw (-2,-1) to (-2,2) to[in=180,out=90] (-1.5,2.5) to[in=90,out=0]
			(-1,2) to (-1,-0.5) to[in=180,out=270]  (-0.5,-1) to[in=270,out=0]
			(0,-0.5) to (0,1) to[in=180,out=90]  (0.7,2) to 
			(3,2) to[in=270,out=0] (4,2.5) to[in=0,out=90]  
			(3,3) to (1,3) to[in=270,out=180] (0,3.5) to[in=180,out=90] (1,4) to (3,4) to[in=270,out=0] 
			(4,4.5) to[in=0,out=90] 
			(3,5) to (-1,5); 
			\foreach \y in {(2.5,2),(2.5,3),(2.5,4),(2.5,5)}
			\draw[blue] plot[mark=x, mark size=5] coordinates{\y};
			\draw (-2.5,0)  to (5,0) node[below]{$Z$}; 
			\draw (2.5,-1)  to (2.5,6) node[left]{$D$}; 
			\node at (5,7) {$\bar{X}$};
			\draw[->] (6.5,2.5) to  node[above]{$\epsilon$} (5.5,2.5);
		\end{scope}
		\begin{scope}[scale=0.4,xshift=19cm] 
			\draw 
			(-2,-1) to (-2,2) to[in=180,out=90] (-1.5,2.5) to[in=90,out=0]
			(-1,2) to
			(-1,-0.5) to[in=180,out=270]  (-0.5,-1) to[in=270,out=0]
			(0,-0.5) to (0,0.5) to[in=200,out=90]  (0.7,1.5) to 
			(2.5,2)  .. controls (3,2.2) and (3,2.8) .. (2.5,3)
			.. controls (2,3.2) and (2,3.8) .. (2.5,4)
			.. controls (3,4.2) and (3,4.8) .. (2.5,5)
			(2.5,5) to (0,5.5);
			\draw (-2.5,0) to (3,0) node[below]{$\bar{Z}$} to[out=0, in=125] (5.2,-1.5) node[below]{$-(2N-1)$}; 
			\draw (6.2,-0.8) node[right]{$(-3)$} to[in=270, out=130] (5,1.5)  to (5,6) node[left]{$\bar{D}$}; 
			\draw[dashed] (6.2,0.8) node[right]{$(-1)$}  to (3.8,-1.2);
			\foreach \y in {2,3,4,5}
			{	\draw[dashed] (2,\y) to (6,\y);
				\node at (7,\y) {$(-2)$};}
			\node at (7,7) {$\tilde{X}$};
		\end{scope}
	\end{tikzpicture}
	\section{stable surfaces with $|K_X|$ composed with a pencil}
	
	In this section  we assume that $X$ is a normal KSBA stable surface with $|K_X|$ composed with a pencil. We use the notations as in \textsection \ref{ncp}.  We have the following commutative diagram:
	\[
	\xymatrix@C=4em@R=8ex{
		\bar{X}\ar[d]_{\varphi}\ar[drr]_{\phi_{|L|}}\ar[r]^{\epsilon} &\tilde{X}\ar[r]^{\pi}\ar@{-->}[dr]^{\phi_{|K_{\tilde{X}}|}} &X\ar@{-->}[d]^{\phi_{|K_X|}}\\
		B\ar[rr]_{h}&&\mathbb{P}^{p_g-1},
	}\]
	where  $\phi_{|L|}=h_{\circ}\varphi$ is the Stein factorization. Since $|K_X|$ is composed with a pencil, $B$ is a curve.  
	At this time, $L$ is algebraically equivalent to $nF$, where $F$ is the general fiber of $\varphi$ with $F^2=0$ and $n=\mathrm{deg}\, h^*\mathcal{O}_{\mathbb{P}^{p_g-1}}(1)$. 
	We have 
	\begin{align*}
		K_X^2=(\epsilon^*\pi^*K_X)^2=&(nF+G)\epsilon^*\pi^*K_X\ge nF\epsilon^*\pi^*K_X= n(FG-\sum a_E\epsilon^*EF).
	\end{align*}
	
	If $F\epsilon^*\pi^*K_X\ge 2$, then $K_X^2\ge 2n\ge 2(p_g-1)$. 
	If $F\epsilon^*\pi^*K_X=FG-\sum a_E\epsilon^*EF < 2$, then $FG=0$ or $1$ since $-\sum a_E\epsilon^*E\ge 0$. 
	If $FG=0$, then $\epsilon=\mathrm{id}$ by Lemma \ref{firstineq}. If $FG=1$, then $FG\ge FR\ge F(\sum Z_i^e+ \sum Z_j^c+\sum \mathcal{E}_k)$ implies $\epsilon$ is either an identity, a blowup of a smooth point, a resolution of an elliptic Gorenstein singularity, or a resolution of an $ADE$-singularity.
	There are the following possibilities:
	\begin{itemize}
		
		\item[(A)] $FG=1$, $\epsilon$ is a blowup of a smooth point $p$. Hence  $K_{\bar{X}}=nF+2\mathcal{E}+G_0$, where $\mathcal{E}$ is the (-1) curve and $G_0=G-\mathcal{E}\ge0 $. Since $1=FG\ge F\mathcal{E}>0$, we see $F\mathcal{E}=1$, $FG_0=0$ and
		$p_a(F)=1+\frac{FK_{\bar{X}}}{2}=2$. Hence $\widetilde{X}$ admits a genus two fibration over $\mathbb{P}^1$ with a section.  Moreover,  $-1=\mathcal{E}K_{\widetilde{X}}=n\mathcal{E}F-2+\mathcal{E}G_0$ implies $\mathcal{E}G_0=0$,  $n=1$ and $p_g=2$;
		
		\item[(B)] $FG=1$, $\epsilon$ is a resolution of an elliptic Gorenstein singularity. Hence  $K_{\bar{X}}=nF+G_0$, where $G_0=G-Z\ge 0$ and $Z$ is the numerical fundamental cycle associated to the elliptic Gorenstein singularity. Since $1=FG\ge FZ>0$, we have $FZ=1$ and $FG_0=0$. Therefore,
		$p_a(F)=1$ and $Z$ is an elliptic curve. Hence $\widetilde{X}$ admits an elliptic fibration over a simple elliptic curve with a section (i.e. a Jacobian fibration). In this case, $n=p_g$;
		
		\item[(C)] $FG=0$, $\epsilon=\mathrm{id}$. Hence $K_{\widetilde{X}}=nF+G$, and $p_a(F)=1$. Since $F\pi^*K_X=-\sum a_EEF>0$, we have $E_0F>0$ for some $E_0$. Hence $\widetilde{X}$ admits an elliptic fibration over $\mathbb{P}^1$. In this case, $n=p_g-1$;
		
		\item[(D)] $FG=1$, $\epsilon=\mathrm{id}$ or $\epsilon$ is a resolution of ADE singularity. However, in either case, $p_a(F)=1+\frac{1}{2}(K_{\bar{X}}+F)F=1+\frac{1}{2}(nF+G+F)F=1+\frac{1}{2}$, which is impossible.
	\end{itemize}
	
	First we consider case (A). Let $\mu\colon \widehat{X}\to \bar{X}$ be a resolution of $ADE$ singularities on the support of $G_0$.  
	Let $\theta=\pi_{\circ}\epsilon_{\circ}\mu$. Write $\widehat{F}=\mu^*F$, $\widehat{G}_0=\mu^*G_0$, $\widehat{\mathcal{E}}=\mu^*\mathcal{E}$ and $\widehat{E}=\mu^*\epsilon^*E$. 
	$\theta^*K_X=\widehat{F}+\widehat{\mathcal{E}}+\widehat{G}_0-\sum a_{E}\widehat{E} $. 
	
	Since $\widehat{G}_0\widehat{F}=G_0F=0$, $\widehat{G}_0$ is supported on several reducible fibers of $\mu_{\circ} \varphi$. Therefore by the Zariski's lemma, the intersection matrix of the support of $\widehat{G}_0$ is semi-negative definite. Since  $\widehat{G}_0\widehat{\mathcal{E}}=0$ and $\widehat{\mathcal{E}}$ is a 1-section of the fibration, $\widehat{G}_0$ is supported on fiber components disjoint  from $\widehat{\mathcal{E}}$. Thus, the intersection matrix of the support of $\widehat{G}_0$ is negative definite. 
	
	Note that $K_{\widehat{X}}=\widehat{F}+2\widehat{\mathcal{E}}+\widehat{G}_0$ and $(\widehat{F}+2\widehat{\mathcal{E}})\widehat{G}_0=0$. 
	Therefore, by Lemma \ref{blowdown}, there exists $\eta \colon \widehat{X}\to Y$ which is a succession of blow-ups, such that $\eta^*K_Y=\widehat{F}+2\widehat{\mathcal{E}}$ and $\widehat{G}_0=\sum \mathcal{E}_k$, where $\mathcal{E}_k$ is a pullback of a (-1) curve.
	We have the following commutative diagram:
	\[
	\xymatrix{
		\widehat{X}\ar[d]^{\eta} \ar@(ur,ul)[rrr]^{\theta}\ar[r]^{\mu}&\bar{X}\ar[r]^{\epsilon}\ar[d]_{\varphi}&\widetilde{X}\ar[r]^{\pi}& X\\
		Y\ar[r]&B.
	}
	\]

	First we have 
	\begin{align*}
		K_X^2&=(\theta^*K_X)^2=(\widehat{F}+\widehat{\mathcal{E}}+\widehat{G}_0)\theta^*K_X \\
		&=\widehat{F}\theta^*K_X+\widehat{G}_0\theta^*K_X\\
		&= 1-\sum a_{E}\widehat{E}\widehat{F}+\widehat{G}_0\theta^*K_X\\
		&\ge 1.
	\end{align*}
	
	If '=' holds, we have $\sum a_{E}\widehat{E}\widehat{F}=\widehat{G}_0\theta^*K_X=0$.  First we claim that $\widehat{G}_0=0$.  Otherwise, since $\widehat{G}_0$ consists of pullback of (-1)-curves, which is not $\theta$-exceptional, we have $\widehat{G}_0\theta^*K_X>0$, a contradiction. Next we claim that $\pi=\mathrm{id}$. Otherwise, $\sum a_{E}\widehat{E}\widehat{F}=\sum a_{E}\epsilon^*EF=0$ and $F\mathcal{E}=1$ implies that for each $\pi$-exceptional curve $E$ the blowup center $p\not \in E$ and $E$  is contained in a reducible fiber. Therefore $EK_{\widetilde{X}}=\epsilon^*E\epsilon^*K_{\widetilde{X}}=\epsilon^*E(F+\mathcal{E})=0$, which implies each $\pi$-exceptional curve $E$ is a (-2) curve, 
	a contradiction. 
	Therefore, $K_X$ is Cartier.
	By \cite[Thm 3.3]{FPR15a},  $X$ is canonically  embedded as a hypersurface of degree 10 in the smooth locus of $\mathbb{P}(1,1,2,5)$.
	
	If $\pi=\mathrm{id}$ and $\widehat{G}_0\theta^*K_X>0$, then $K_X$ is Cartier and thus $\widehat{G}_0\theta^*K_X=\theta(\widehat{G}_0)K_X\ge1$. Therefore, 
	$$K_X^2= 1+\widehat{G}_0\theta^*K_X=1+\theta(\widehat{G}_0)K_X\ge 2.$$
	
	If $\pi\not=\mathrm{id}$, i.e. $-\sum a_{E}\widehat{E}\not =0$, we claim that $-\sum a_{E}\widehat{E}\widehat{F}\not =0$.
	Otherwise, we assume $-\sum a_{E}\widehat{E}\widehat{F}  =0$ for a contradiction.
	We see that $\widehat{E}\widehat{F}=\epsilon^*EF=0$  for each $E\in \mathrm{supp}(-\sum a_{E}E)$. 
	Hence each $\widehat{E}$ is contained in a reducible fiber, $p\not \in E$ and thus $\widehat{E}\widehat{\mathcal{E}}=\epsilon^*E\mathcal{E}=0$. 
	As $\eta(\widehat{E})K_Y=\widehat{E}\eta^*K_Y=\widehat{E}(\widehat{F}+2\widehat{\mathcal{E}})=0$, $\eta(\widehat{E})$ is either a point or a (-2) curve on $Y$. 
	Since $\widehat{G}_0\widehat{\mathcal{E}}=0$ and $-\sum a_{E}\widehat{E}\widehat{\mathcal{E}}=0$, we see that $\eta_*(-\sum a_{E}\widehat{E})$ is supported on (-2) curves disjoint from the 1-section $\eta(\widehat{\mathcal{E}})$. As a result, $\eta_*(-\sum a_{E}\widehat{E})$ is contractible. Thus  $\widehat{G}_0 -\sum a_{E}\widehat{E}$  is contractible. Therefore, $(\widehat{G}_0 -\sum a_{E}\widehat{E})^2\le 0$. We have
	$$0\le \theta^*K_X(\widehat{G}_0 -\sum a_{E}\widehat{E})=(\widehat{F}+\widehat{\mathcal{E}}+\widehat{G}_0-\sum a_{E}\widehat{E} )(\widehat{G}_0 -\sum a_{E}\widehat{E})=(\widehat{G}_0 -\sum a_{E}\widehat{E})^2\le 0.$$
	Therefore, we have $\widehat{G}_0 =\sum a_{E}\widehat{E}=0$, a contradiction.
	Guaranteed the claim, we may assume that there is a curve $\pi$-exceptional curve $E'$ on $\widetilde{X}$, such that $\widehat{E'}\widehat{F}=\epsilon^*E'F>0$, and hence $E'\epsilon(F)>0$. While $G_0F=0$ implies $\epsilon(G_0)\epsilon(F)=0$, we see that  $\mathrm{Supp}(\epsilon(G_0))\not \supseteq E'$. Therefore $\epsilon(G_0)E'\ge 0$ and thus $\epsilon^*E'G_0\ge 0$. 
	We have $E'K_{\widetilde{X}}=\epsilon^*E'\epsilon^*K_{\widetilde{X}}=\epsilon^*E'(F+\mathcal{E}+G_0)
	=\epsilon^*E'F+\epsilon^*E'G_0\ge \epsilon^*E'F>0$.
	Therefore, we have $-E'^2\ge -3$ and then $-a_{E'}\ge \frac{1}{3}$ by Remark \ref{discrep}. 
	Hence we have
	\begin{align*}
		K_X^2&=1-\sum a_{E}\widehat{E}\widehat{F}+\widehat{G}_0\theta^*K_X\\
		&\ge 1-\sum a_{E}\widehat{E}\widehat{F}\\
		&\ge 1-a_{E'}\\
		&\ge 1+\frac{1}{3}.
	\end{align*}

	Moreover, if '=' holds, then $\widehat{G}_0\theta^*K_X=0$, $\sum\limits_{E\not=E'} a_{E}\widehat{E}\widehat{F}=0$, $\widehat{E'}\widehat{F}=1$ and $a_{E'}=-\frac{1}{3}$.
	Similarly to the the previous case, $\widehat{G}_0\theta^*K_X=0$ implies $G_0=0$, and then $\mu=\mathrm{id}$.
	$a_{E'}=-\frac{1}{3}$ implies $-E'^2=3$ and $E'\sum\limits_{E\not=E'}E=0$.
	We claim that $-\sum a_{E}E=\frac{1}{3}E'$.
	Otherwise, there exists a $\pi$-exceptional curve $E\not=E'$ with $-E^2\ge 3$. 
	However, $\widehat{E}\widehat{F}=0$ implies $\epsilon^*EF=0$. Note that $\epsilon^*EK_{\bar{X}}=\epsilon^*E(F+2\mathcal{E})=0$. 
	Hence $\epsilon^*E$ is a (-2) curve and then $E$ is a (-2) curve, a contradiction. 
	Summarize, if $K_X^2=1+\frac{1}{3}$, $\bar{X}$ admits a genus two fibration with two 1-sections $\mathcal{E}$ and $E'$. $\mathcal{E}^2=-1$, $E'^2=-3$ and $K_{\bar{X}}=F+2\mathcal{E}$. $X$ is obtained from $\bar{X}$ by contracting $\mathcal{E}$ and $E'$. 
	Example \ref{6.1a} shows that such surface exists. Therefore, the inequality is sharp.
	
	For case (B), we define $\mu$, $\theta$ similarly to case A). Write $\widehat{F}=\mu^*F$, $\widehat{G}_0=\mu^*G_0$, $\widehat{Z}=\mu^*Z$ and $\widehat{E}=\mu^*\epsilon^*E= \mu^{-1}\epsilon^{-1}(E)$. 
	We have $\theta^*K_X=n\widehat{F}+\widehat{Z}+\widehat{G}_0-\sum a_{E}\widehat{E} $. 
	Moreover, $K_{\widehat{X}}=n\widehat{F}+\widehat{G}_0$ and $\widehat{F}\widehat{G}_0=0$. 
	Similarly to case (A), by Lemma \ref{blowdown} there exists $\eta \colon \widehat{X}\to Y$ which is a succession of blow-ups, such that  $\eta^*K_Y=n\widehat{F}$ and
	$\widehat{G}_0= \sum \mathcal{E}_k$, where $\mathcal{E}_k$'s are the pullbacks of a (-1) curves.
	We have the following commutative diagram:
	\[
	\xymatrix{
		\widehat{X}\ar[d]^{\eta} \ar@(ur,ul)[rrr]^{\theta}\ar[r]^{\mu}&\bar{X}\ar[r]^{\epsilon}\ar[d]_{\varphi}&\widetilde{X}\ar[r]^{\pi}& X\\
		Y\ar[r]&B.
	}
	\]
	Note that $\varphi$ is a fibration onto an elliptic curve. Therefore, $F\epsilon^*E=0$ for each $\pi$-exceptional curve $E$. Hence $\widehat{F}\widehat{E}=0$ for each $\pi$-exceptional curve $E$.
	We have 
	\begin{align*}
		K_X^2&=(\theta^*K_X)^2=(n\widehat{F}+\widehat{Z}+\widehat{G}_0-\sum a_{E}\widehat{E})\theta^*K_X\\ &=n\widehat{F}\theta^*K_X+\widehat{G}_0\theta^*K_X\\
		&= n+\widehat{G}_0\theta^*K_X\\
		&\ge n\\
		&=p_g.
	\end{align*}
	If '=' holds, we have $\widehat{G}_0\theta^*K_X=0$. 
	Similarly to case A),  $\widehat{G}_0\theta^*K_X=0$ implies $G_0=0$ and $\mu=\mathrm{id}$.
	We claim that $\pi=\mathrm{id}$. Otherwise, there exists a $\pi$-exceptional curve $E$ with $-E^2\ge 3$. 
	However, $EK_{\widetilde{X}}=\epsilon^*EK_{\bar{X}}=\epsilon^*E(nF)=n\widehat{E}\widehat{F}=0$, which implies $E$ is a (-2) curve, a contradiction. 
	Therefore, $K_X$ is Cartier, $\bar{X}$ is a Jacobian surface over an elliptic curve, and $X$ is obtained from $\bar{X}$ by contracting the zero-section $Z$. It is easy to see that every fiber of $\bar{X}$ is irreducible by the ampleness of $K_X$.

	Assume $K_X^2>p_g$, i.e. $\widehat{G}_0\theta^*K_X>0$. 
	Since $\widehat{Z}$ is a 1-section, we see that each fiber is reduced and $C\widehat{Z}\le 1$ for any reduced and irreducible $C$ contained in a fiber. 
	Note that $\widehat{F}\widehat{G}_0=FG_0=0$ implies that $\widehat{G}_0$ is supported on several fibers.  
	We claim that there exists a (-1) curve $\mathcal{E}_0$ in $\mathrm{Supp}(\widehat{G}_0)$ such that $\mathcal{E}_0\widehat{Z}=1$. Otherwise, assume $\widehat{G}_0 \widehat Z=0$ for contradiction. 
	Since $0=E(nF)=\widehat E(n\widehat F)=\widehat E\eta^*(K_Y)=\eta(\widehat E)K_Y $,  $\eta(\widehat E)$ is either a point or a (-2) curve on $Y$. 
	As $\widehat{G}_0 \widehat Z=0$ and $ \sum a_{E}\widehat{E} \widehat Z=0$, $\eta_*(\sum a_{E}\widehat{E})$ is supported on fiber components disjoint from the 1-section $\eta(\widehat Z)$, which are contractible. 
	As a result,  $\widehat{G}_0 -\sum a_{E}\widehat{E}$  is contractible. Therefore, $(\widehat{G}_0 -\sum a_{E}\widehat{E})^2\le 0$. 
	We have
	$$0\le \theta^*K_X(\widehat{G}_0 -\sum a_{E}\widehat{E})=(n\widehat{F}+\widehat{Z}+\widehat{G}_0-\sum a_{E}\widehat{E})(\widehat{G}_0 -\sum a_{E}\widehat{E})=(\widehat{G}_0 -\sum a_{E}\widehat{E})^2\le 0.$$
	Therefore, $\widehat{G}_0 =\sum a_{E}\widehat{E}=0$, a contradiction.
	
	Since $0<\mathcal{E}_0\theta^*K_X=\mathcal{E}_0\widehat{Z}+\mathcal{E}_0\widehat{G}_0-\mathcal{E}_0 \sum a_{E}\widehat E=-\mathcal{E}_0 \sum a_{E}\widehat E$, we see that there exists $\widehat E_1$ such that $\mathcal{E}_0 \widehat E_1 =1$. 
	Locally  around  $\mathcal{E}_0$, the configuration of $\widehat Z$, $\mathcal{E}_0$ and $\widehat E_1$ is as in Figure \ref{localconfg}. 
	
	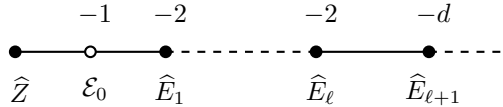
\begin{figure}[H]
		\centering
		\begin{tikzpicture}[thick] 
			\begin{scope}[scale=1.0] 
				\clip(-1,-1) rectangle (7,1);
				\draw (0,0)-- (2,0); 
				\draw[dashed] (2,0)-- (4,0);
				\draw (4,0)-- (5.5,0);
				\draw[dashed] (5.5,0)-- (6.5,0);
				\draw[fill=black] (0,0) circle[radius=2pt];
				\draw[fill=white] (1,0) circle[radius=2pt];
				\foreach \x in {2,4,5.5}
				\draw[fill=black] (\x,0) circle[radius=2pt];
				\foreach \x in {2,4}
				\node at (\x,0.5) {{ $-2$}};
				\node at (1,0.5) {{ $-1$}};
				\node at (5.5,0.5) {{  $-d$}};
				\node at (0,-0.5) {{  $\widehat Z$}};
				\node at (1,-0.5) {{  $\mathcal{E}_0$}};
				\node at (2,-0.5) {{  $\widehat E_1$}};
				\node at (4,-0.5) {{  $\widehat E_{\ell}$}};
				\node at (5.5,-0.5) {{  $\widehat E_{\ell+1}$}};
			\end{scope}
		\end{tikzpicture} 
		\caption{local configurations of $\mathcal{E}_0$}
		\label{localconfg}
	\end{figure}
	
	Note that $d\ge 3$. 
	Let $\mathcal{E}_1=\mathcal{E}_0+\widehat E_1$,..., and $\mathcal{E}_{\ell}=\mathcal{E}_0+\widehat E_1+...+\widehat E_{\ell}$. We see that $\mathcal{E}_{k}^2=-1$, $k=1,...,\ell$ and $\widehat G_0\ge \sum_{k=0}^{\ell}\mathcal{E}_k$. Using the equations $0=\widehat{E}_k\theta^*K_X=\widehat{E}_kK_{\widehat{X}}-\sum a_{E}\widehat{E}\widehat{E}_k$, $k=1,...,l$, we obtain $a_{E_k}=ka_{E_1}$, $k=1,...,\ell+1$.
	
	We have
	\begin{align*}
		K_X^2&= p_g+\widehat G_0\theta^*K_X\\
		&\ge p_g+ \sum_{k=0}^{\ell}\mathcal{E}_k\theta^*K_X\\
		&=p_g+(\ell+1)\mathcal{E}_0\theta^*K_X\\
		&=p_g-(\ell+1)a_{E_1}\\
		&=p_g-a_{E_{\ell+1}}\\
		&\ge p_g+\frac{d-2}{d} \quad\text{(by Remark \ref{discrep})}\\
		&\ge p_g+\frac{1}{3}.
	\end{align*}

	If '=' holds, we have $\widehat G_0\theta^*K_X=\sum_{k=0}^{\ell}\mathcal{E}_k\theta^*K_X$ and $-a_{E_{\ell+1}}=\frac{d-2}{d}=\frac{1}{3}$. 
	$\widehat G_0-\sum_{k=0}^{\ell}\mathcal{E}_k$ consists of sum of pullbacks of (-1) curves. 
	$(\widehat G_0-\sum_{k=0}^{\ell}\mathcal{E}_k)\theta^*K_X=0$ implies $\widehat{G_0}= \sum_{k=0}^{\ell}\mathcal{E}_k$.
	$-a_{E_{\ell+1}}=\frac{1}{3}$ implies $-E_{\ell+1}^2=-3$ and $E_{\ell+1}\sum\limits_{E\not=E_{\ell+1}}E=0$.
	Hence $\ell=0$, and thus $\widehat{G_0}=\mathcal{E}_0$. As a result, we have $\mu=\mathrm{id}$.
	Moreover, we claim that $-\sum a_EE=\frac{1}{3}E_1$. Otherwise, there exists a $\pi$-excepptional curve $E'\not=E_1$ with $-E'^2\ge 3$. 
	Note that $\mathcal{E}_0$, $\widehat{E}_1$ is contained in the same reduced fiber denoted by $\widehat{F}'$ and $\widehat{E}_1$ is the only curve in $\mathrm{Supp}(\widehat{F}'-\mathcal{E}_0)$ satisfying $\widehat{E}_1\cap\mathcal{E}_0\not=\emptyset$. Therefore,  $\mathcal{E}_0\widehat{E}'=0$.
	Thus $E'K_{\tilde{X}}=\widehat{E}'\mu^*\epsilon^*K_{\tilde{X}} =\widehat{E}'\mathcal{E}_0=0$, which implies $E'$ is a (-2) curve, a contradiction. 
	Moreover, $\widehat{G}_0=\mathcal{E}_0$ implies $\eta$ is a blowup of the point $\eta(\widehat{Z})\cap \eta(\widehat{E}_1)$.
	In conclusion, $\bar{X}$ is a blowup of a Jacobian surface $Y$ over an elliptic curve, and $X$ is a contraction of the zero section $Z$ and a (-3) curve on $\bar{X}$.                                                                                                                                                                                                                                                                                                                                                                                                                                                                                                                                                                                                                                                                                                                                                                                                                                                                                                                                                                                                                                                                                                                                                                                                                                                                                                                                                                                                                                                                                                                                                                                                                                                                                                                                                                                                                                                                                                       
	Example \ref{6.1b} shows that such surface exists. Therefore, the inequality is sharp.

	Finally, we consider case (C).
	We define $\mu$, $\theta$ similarly to case A). Write $\widehat{F}=\mu^*F$, $\widehat{G}=\mu^*G$, and $\widehat{E}=\mu^*E= \mu^{-1}(E)$. 
	We have $\theta^*K_X=n\widehat{F}+\widehat{G}-\sum a_{E}\widehat{E} $. 
	Moreover, $K_{\widehat{X}}=n\widehat{F}+\widehat{G}$ and $\widehat{F}\widehat{G}=0$. 
	Similarly to case (A), by Lemma \ref{blowdown} there exists $\eta \colon \widehat{X}\to Y$ which is a succession of blow-ups, such that  $\eta^*K_Y=n\widehat{F}$ and
	$\widehat{G}= \sum \mathcal{E}_k$, where $\mathcal{E}_k$'s are the pullbacks of a (-1) curves.
	We have the following commutative diagram:
	\[
	\xymatrix{
		\widehat{X}\ar[d]^{\eta} \ar@(ur,ul)[rrr]^{\theta}\ar[r]^{\mu}&\bar{X}\ar@{=}[r]\ar[d]_{\varphi}&\widetilde{X}\ar[r]^{\pi}& X\\
		Y\ar[r]&B.
	}
	\]

	On the other hand, 
	since  $G$ is vertical ($FG=0$)  and $E_0$ is horizontal ($E_0F>0$), we have $E_0G\ge 0$. Hence $-E_0^2=2+E_0K_{\widetilde{X}}=2+nE_0F+E_0G\ge 2+nE_0F\ge n+2$. 
	Therefore, we have $-E_0^2\ge n+2$ and then by Remark \ref{discrep} we have
	$$-a_{E_0}=\frac{-2-E_0^2-\sum\limits_{E\not=E_0} a_{E}EE_0}{-E_0^2}\ge \frac{-2-E_0^2}{-E_0^2}\ge \frac{n}{n+2}.$$
	We then have 
	\begin{align}
		K_X^2&=\pi^*K_X^2=(nF+G)\pi^*K_X \nonumber\\
		&\ge nF\pi^*K_X \label{ineq1}\\
		&=-n\sum a_{E}EF \nonumber\\
		&\ge -na_{E_0}E_0F \label{ineq2}\\
		&\ge -na_{E_0} \label{ineq3}\\
		&\ge \frac{n^2}{n+2} \label{ineq4}\\
		&=\frac{p_g-1}{p_g+1}(p_g-1). \nonumber
	\end{align}
	
	If $K_X^2=\frac{p_g-1}{p_g+1}(p_g-1)$ i.e. (\ref{ineq1}), (\ref{ineq2}), (\ref{ineq3})  and (\ref{ineq4}) achieve '=', we have $G\pi^*K_X=0$, $\sum\limits_{E\not=E_0} a_{E}EF=0$, $E_0F=1$, $E_0^2=-n-2$, and $\sum\limits_{E\not=E_0}EE_0=0$. 
	We claim $G=0$. Otherwise,  since $\widehat{G}$ consists of pullback of (-1)-curves, which is not $\theta$-exceptional, we have $\widehat{G}\theta^*K_X=G\pi^*K_X>0$, a contradiction. 
	As a result, $\mu=\mathrm{id}$. 
	Next we claim $\sum a_{E}E=-\frac{n}{n+2}E_0$. 
	Otherwise, there exists a $\pi$-exceptional curve $E'\not =E_0$ with $-E'^2\ge 3$. Moreover, $\sum\limits_{E\not=E_0} a_{E}EF=0$ implies $E'F=0$. Thereofore, we have $E'K_{\tilde{X}}=nE'F=0$, a contradiction.
	Hence $\widetilde{X}$ is a Jacobian surface over $\mathbb{P}^1$ with a zero section $E_0$ and $E_0^2=-n-2=-p_g-1$.
	$X$ is obtained from $\tilde{X}$ by contracting the zero section $E_0$. 
	Moreover, it is easy to see that every fiber of $\widetilde{X}$ is irreducible by the ampleness of $K_X$. 
	
	If $K_X^2>\frac{p_g-1}{p_g+1}(p_g-1)$, we may assume $\frac{p_g-1}{p_g+1}(p_g-1)<K_X^2<\frac{p_g-1}{p_g+1}(p_g-1)+1$. We claim that at least one of the inequalities  (\ref{ineq2}), (\ref{ineq3})  or (\ref{ineq4})  is strict. Otherwise, we assume $F\pi^*K_X=\frac{p_g-1}{p_g+1}$ for a contradiction.  We have $F\pi^*K_X=-\sum\limits_{E\not=E_0} a_{E}EF=\frac{n}{n+2}=\frac{p_g-1}{p_g+1}$. Hence $E_0F=1$ and $E_0G=E_0\sum\limits_{E\not=E_0}E=\sum\limits_{E\not=E_0}EF=0$. 
	Moreover, we have $G>0$ since  (\ref{ineq1}) is strict. 
	If $\sum\limits_{E\not=E_0}a_EE=0$, we have $0<\widehat{G}\theta^*K_X=\widehat{G}K_{\widehat{X}}-a_{E_0}\widehat{E}_0\widehat{G}=\widehat{G}K_{\widehat{X}}<0$, a contradiction. 
	If $\sum\limits_{E\not=E_0}a_EE \not=0$, then for each $\pi$-exceptional curve $E\not=E_0$, $\eta(\widehat{E})K_Y=n\widehat{E}\widehat{F}=0$, which implies $\eta(\widehat{E})$ is either a point or a (-2) curve. Moreover, $0=E_0G=\widehat{E}_0\widehat{G}$ implies that the blowup centers of $\eta$ is outside $\eta(\widehat{E}_0)$. Therefore, $\eta(\sum\limits_{E\not=E_0}\widehat{E})$ is still disjoint with  $\eta(\widehat{E}_0)$ which is a 1-section of the fibration. 
	Therefore, $\eta_*(\sum\limits_{E\not=E_0}a_E\widehat{E} )$ is contractible. As a result, $\widehat{G}-\sum\limits_{E\not=E_0}a_E\widehat{E} $ is contractible. Thus $(\widehat{G}-\sum\limits_{E\not=E_0}a_E\widehat{E} )^2\le 0$. 
	We have
	$$0\le \theta^*K_X(\widehat{G} -\sum\limits_{E\not=E_0}a_E\widehat{E} )=(n\widehat{F}+\widehat{G}-\sum a_{E}\widehat{E})(\widehat{G} -\sum\limits_{E\not=E_0}a_E\widehat{E} )=(\widehat{G} -\sum\limits_{E\not=E_0}a_E\widehat{E} )^2\le 0.$$
	Therefore, $\widehat{G} =\sum\limits_{E\not=E_0}a_E\widehat{E} =0$, a contradiction. 
	Thus, we have proved the claim.

	
	We note that if (\ref{ineq4}) is strict i.e. $-a_{E_0}>\frac{n}{n+2}$, we have either $\sum\limits_{E\not=E_0} EE_0>0$, or $\sum\limits_{E\not=E_0} EE_0=0$ and $-E_0^2>n+2$. 
	
	We distinguish between the following cases: 
	\begin{itemize}
		\item[a)] (\ref{ineq2}) is strict, i.e. there exist $E'\not=E_0$ with $E'F\ge 1$.
		Similarly to $E_0$, we have $-a_{E'}\ge \frac{n}{n+2}$. Thus we have 
		\begin{align*}
			K_X^2&=(nF+G)\pi^*K_X=-n\sum a_{E}EF+G\pi^*K_X\ge-n\sum a_{E}EF\\
			&\ge 2n\cdot\frac{n}{n+2}=\frac{2p_g-2}{p_g+1}(p_g-1). 
		\end{align*}
		Noticing that $\frac{2p_g-2}{p_g+1}(p_g-1)>\frac{2p_g-2}{2p_g+1}(p_g-1)$ which is achieved in case c), we see $K_X^2$ does not achieve the minimal value in this case.

		\item[b)] (\ref{ineq3}) is strict, i.e. $E_0F\ge 2$. We have $-E_0^2\ge 2+2n$. Therefore,
		\begin{align*}
			K_X^2&=(nF+G)\pi^*K_X=-n\sum a_{E}EF+G\pi^*K_X\ge-n\sum a_{E}EF\\
			&\ge 2n\cdot\frac{2n}{2n+2}=\frac{2p_g-2}{p_g}(p_g-1). 
		\end{align*}
		Since $\frac{2p_g-2}{p_g}(p_g-1) > \frac{2p_g-2}{2p_g+1}(p_g-1)$, $K_X^2$ does not achieve the minimal value in this case as well.
		
		\item[c)]  (\ref{ineq2}) and (\ref{ineq3}) achieve '=', and $\sum\limits_{E\not=E_0} EE_0>0$. 
		We have $-\sum\limits_{E\not=E_0} a_{E}EF=0$, $E_0F=1$ and $-E_0^2\ge n+2$.
		By \cite[ Lemma 3.7 (ii,iii)]{alexeev89}, the minimal possible value of $-a_{E_0}$ with $-E_0^2\ge n+2$ and $\sum\limits_{E\not=E_0} EE_0>0$ is $\frac{2n}{2n+3}$ and is achieved when the exception curve consists of $E_0+E_1$, where  $E_0$ is a $-(n+2)$ curve, $E_1$ is a (-2) curve and $E_0E_1=1$. 
		Therefore, 
		\begin{align*}
			K_X^2&=(nF+G)\pi^*K_X=-n\sum a_{E}EF+G\pi^*K_X\\
			&\ge -na_{E_0}E_0F\\
			&\ge n\cdot\frac{2n}{2n+3}=\frac{2p_g-2}{2p_g+1}(p_g-1).
		\end{align*}
		Example \ref{6.1c1} shows that this minimal value can be achieved.

		\item[d)] (\ref{ineq2}) and (\ref{ineq3}) achieve '=', $\sum\limits_{E\not=E_0} EE_0=0$ and $-E_0^2>n+2$.
		At this time, we have $G>0$ since $E_0G=-E_0^2-(2+nE_0F)=-E_0^2-n-2>0$. 
		On the other hand, $0<E_0G=\widehat{E}_0\widehat{G}$ implies $\eta$ has blowup centers on $\eta(\widehat{E}_0)$. 
		Therefore, the local configuration around $E_0$ is as Figure \ref{localconfg},
		where $k\ge 1$, $\mathcal{E}_i$ is (-1) curve and $d_{i,\ell_{i}}\ge 3$, for $i=1,2,...,k$. 
		We remark that $G\ge \sum \limits_{i=1}^k \ell_{i}\mathcal{E}_i$. 

		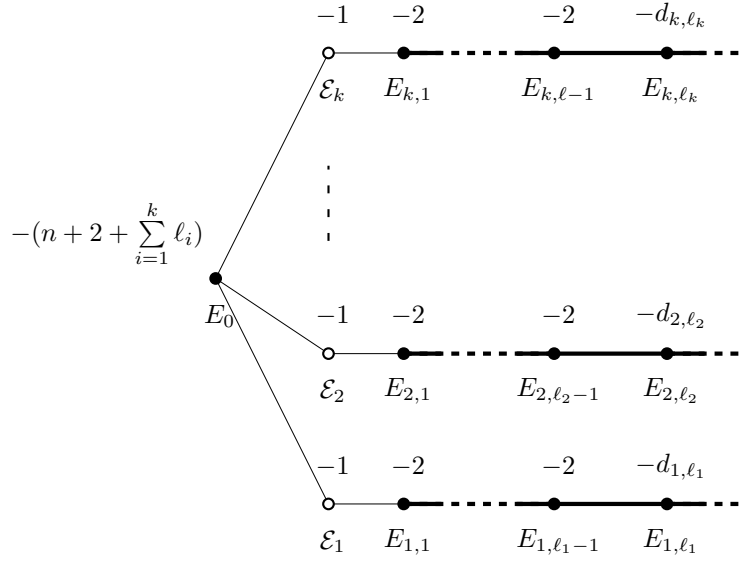
\begin{figure}
			\centering
			\begin{tikzpicture}[thick] 
				\begin{scope}[scale=1.0] 
					\clip(-4,-1) rectangle (7,7);
					
					\node at (-2,3.6) {{ $-(n+2+\sum\limits_{i=1}^k \ell_{i})$}};
					\node at (-0.5,2.5) {{  $E_0$}};
					\draw[fill=black] (-0.5,3) circle[radius=2pt];
					
					\foreach \y in {0,1,3}
					{
						\draw[very thin] (-0.5,3) -- (1,0+\y *2);
					}
					
					\draw[ultra thin] (1,0+6)-- (2,0+6); 
					\draw[ultra thick] (2,0+6)-- (2.5,0+6); 
					\draw[ultra thick, dashed] (2,0+6)-- (4,0+6);
					\draw[ultra thick] (3.5,0+6)-- (6,0+6);
					\draw[ultra thick, dashed] (5.5,0+6)-- (6.5,0+6);
					
					\draw[fill=white] (1,0+6) circle[radius=2pt];
					\foreach \x in {2,4,5.5}
					\draw[fill=black] (\x,0+6) circle[radius=2pt];
					\foreach \x in {2,4}
					\node at (\x,0.5+6) {{ $-2$}};
					\node at (1,0.5+6) {{ $-1$}};
					\node at (5.5,0.5+6) {{  $-d_{k,\ell_k}$}};
					
					\node at (1,-0.5+6) {{  $\mathcal{E}_{k}$}};
					\node at (2,-0.5+6) {{  $ E_{k,1}$}};
					\node at (4,-0.5+6) {{  $ E_{k,\ell-1}$}};
					\node at (5.5,-0.5+6) {{  $ E_{k,\ell_k}$}};
					
					\foreach \y in {1,2}
					{	\draw[ultra thin]  (1,0+\y *2-2)-- (2,+\y *2-2); 
						\draw[ultra thick]  (2,0+\y *2-2)-- (2.5,+\y *2-2); 
						\draw[ultra thick, dashed] (2,0+\y *2-2)-- (4,0+\y *2-2);
						\draw[ultra thick] (3.5,0+\y *2-2)-- (6,0+\y *2-2);
						\draw[ultra thick, dashed] (5.5,0+\y *2-2)-- (6.5,0+\y *2-2);
						
						\draw[fill=white] (1,0+\y *2-2) circle[radius=2pt];
						\foreach \x in {2,4,5.5}
						\draw[fill=black] (\x,0+\y *2-2) circle[radius=2pt];
						\foreach \x in {2,4}
						\node at (\x,0.5+\y *2-2) {{ $-2$}};
						\node at (1,0.5+\y *2-2) {{ $-1$}};
						\node at (5.5,0.5+\y *2-2) {{  $-d_{\y,\ell_{\y}}$}};
						
						\node at (1,-0.5+\y *2-2) {{  $\mathcal{E}_{\y}$}};
						\node at (2,-0.5+\y *2-2) {{  $ E_{\y,1}$}};
						\node at (4,-0.5+\y *2-2) {{  $ E_{\y,\ell_{\y}-1}$}};
						\node at (5.5,-0.5+\y *2-2) {{  $ E_{\y,\ell_{\y}}$}};
					}
					\draw[loosely dashed] (1,0+3.5)-- (1,0+4.5);
				\end{scope}
			\end{tikzpicture} 
			\caption{local configuration around $E_0$}
			\label{localconfg}
		\end{figure}
		We note that $a_{E_{i,j}}=ja_{E_{i,1}}$. Moreover, by \cite[ Lemma 3.7 (ii,iii)]{alexeev89}, we have
		\begin{equation}\label{discrofend}
			-a_{E_{i,1}}\ge \frac{1}{2\ell_i+1},
		\end{equation} 
		and '=' holds if and only if $d_{i,\ell_i}=3$ and $E_{i,\ell_i}$ is an end curve of the exceptional locus, i.e. there is no exceptional curve intersecting  $E_{i,\ell_i}$ except $E_{i,\ell_i-1}$.
		
		When $n\le 3$, we have
		\begin{align*}
			K_X^2&=(nF+G)\pi^*K_X=-n\sum a_{E}EF+G\pi^*K_X\\
			&> -na_{E_0}E_0F\\
			&\ge n\cdot\frac{n+1}{n+3}\\
			&=\frac{p_g}{p_g+2}(p_g-1).
		\end{align*}
		Since $\frac{p_g}{p_g+2}(p_g-1)\ge \frac{2p_g-2}{2p_g+1}(p_g-1)$ which is achieved in case c), $K_X^2$ does not achieve the minimal value in this case.

		Next we deal with the case $n\ge 4$. We will show that $K_X^2$  achieves the minimal value in this case.
		The volume
		\begin{align*}  
			K_X^2&=(nF+G)\pi^*K_X= -n\sum a_{E}EF+G\pi^*K_X\\
			&= -na_{E_0}E_0F+G\pi^*K_X\\
			&\ge -na_{E_0}+\ell_1 \mathcal{E}_1\pi^*K_X\\
			&= -na_{E_0}+\ell_1(-a_{E_0}-a_{E_{1,1}}-1)\\
			&\ge n\frac{n+\ell_1}{n+2+\ell_1}+\ell_1(\frac{n+\ell_1}{n+2+\ell_1}+\frac{1}{2\ell_1+1}-1) \quad \text{(by (\ref{discrofend}))}\\
			&=n - \frac{3}{2} + \frac{15 \ell_1 - n + 6}{\left(4 \ell_1 + 2\right) \left(\ell_1 + n + 2\right)}.
		\end{align*}
		
		Denote $V_{n,\ell}\colon=n - \frac{3}{2} + \frac{15 \ell - n + 6}{\left(4 \ell + 2\right) \left(\ell + n + 2\right)}$. 
		We consider the difference 
		$$\Delta V_{n,\ell}\colon =V_{n,\ell}-V_{n,\ell+1}=\frac{15 \ell ^{2} + \ell  \left(27 - 2 n\right) - n^{2} - 5 n + 6}{\left(2 \ell  + 1\right) \left(2 \ell  + 3\right) \left(\ell + n + 2\right) \left(\ell + n + 3\right)}.$$ 
		Note that $15 \ell ^{2} + \ell  \left(27 - 2 n\right) - n^{2} - 5 n + 6$ increases monotonically with $\ell$ when $\ell$ ranges from $1$ to $\infty$. 
		Moreover, $\Delta V_{n,1}=- \frac{n^{2} + 7 n - 48}{15 \left(n + 3\right) \left(n + 4\right)}<0$ when $n\ge 5$, and $\Delta V_{n,\ell}>0$ when $\ell$ is large enough.  
		$\underset{\ell\to +\infty}{\mathrm{lim}} V_{m,\ell}=n-\frac{3}{2}$.  
		Hence, $V_{n,\ell}$ increases and then decreases to the limit $V_{n,\infty}=n-\frac{3}{2}$ as $\ell$ increases from 1 to $+\infty$. 
		Note that $n-\frac{3}{2}\ge V_{n,1}$, when $n\ge 21$.  
		Therefore, $\mathrm{min}\{V_{n,\ell}|\ell=1,2...\}=V_{n,1}$, when  $n\ge 21$. 
		Thus, we conclude that $K_X^2\ge V_{n,\ell_1}\ge V_{n,1}=\frac{(3p_g-2)p_g-4}{3(p_g+2)}$ when $n=p_g-1\ge 21$. 
		Example \ref{6.1c2} shows that '=' can be achieved.  
		Note that $\frac{(3p_g-2)p_g-4}{3(p_g+2)}<\frac{2p_g-2}{2p_g+1}(p_g-1)$. Thus $\frac{(3p_g-2)p_g-4}{3(p_g+2)}$ is exactly the minimal volume of $K_X^2$.

		It remains to consider the case $4\le n\le 20$. We will show that the minimal volume is $V_{n,1}$ as well.
		We first consider normal KSBA stable surfaces whose local configuration around $E_0$
		is as Figure \ref{localconfg3}, where $k=1$, $G=\ell \mathcal{E}$, $d_{\ell}=3$ and $E_{\ell}$ is an end curve of the exceptional curve.

		\begin{figure}
			\centering
			\begin{tikzpicture}[thick] 
				\begin{scope}[scale=1.0] 
					\clip(-2,-1) rectangle (7,1);
					\draw (-0.5,0)-- (2,0); 
					\draw[very thick] (2,0) -- (2.5,0); 
					\draw[dashed] (2,0)-- (4,0);
					\draw[very thick] (3.5,0)-- (5.5,0);
					\draw[fill=black] (-0.5,0) circle[radius=2pt];
					\draw[fill=white] (1,0) circle[radius=2pt];
					\foreach \x in {2,4,5.5}
					\draw[fill=black] (\x,0) circle[radius=2pt];
					\foreach \x in {2,4}
					\node at (\x,0.5) {{ $-2$}};
					\node at (1,0.5) {{ $-1$}};
					\node at (-0.5,0.5) {{ $-(n+2+\ell)$}};
					\node at (5.5,0.5) {{  $-3$}};
					\node at (-0.5,-0.5) {{  $E_0$}};
					\node at (1,-0.5) {{  $\mathcal{E}$}};
					\node at (2,-0.5) {{  $ E_1$}};
					\node at (4,-0.5) {{  $ E_{\ell-1}$}};
					\node at (5.5,-0.5) {{  $ E_{\ell}$}};
				\end{scope}
			\end{tikzpicture} 
			\caption{local configuration around $\mathcal{E}_0$ with $-a_{E_{\ell}}= \frac{2\ell}{2\ell+1}$}
			\label{localconfg3}
		\end{figure}
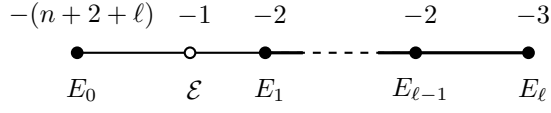

		The volume
		\begin{align*}  
			K_X^2&=(nF+G)\pi^*K_X= -n\sum a_{E}EF+G\pi^*K_X\\
			&= -na_{E_0}E_0F+G\pi^*K_X\\
			&= -na_{E_0}+\ell \mathcal{E}\pi^*K_X\\
			&= -na_{E_0}+\ell(-a_{E_0}-a_{E_1}-1)\\
			&= n\frac{n+l}{n+2+\ell}+\ell(\frac{n+\ell}{n+2+\ell}+\frac{1}{2\ell+1}-1) \\
			&=n - \frac{3}{2} + \frac{15 \ell - n + 6}{\left(4 \ell + 2\right) \left(\ell + n + 2\right)}\\
			&=V_{n,\ell}.
		\end{align*}

		Moreover, since $\mathcal{E}\pi^*K_X=\frac{n+\ell}{n+2+\ell}+\frac{1}{2\ell+1}-1>0$, we have $\ell<\frac{n}{3}$.
		We note that $\mathrm{min} \{ V_{n,\ell}|l\le \frac{n}{3}\}=V_{n,1}$ for $4\le n\le 20$ by direct computation. 
		Therefore, the minimal volume $K_X^2$ is $V_{n,1}$ if the local configuration around $E_0$
		is as Figure \ref{localconfg3}.
		
		For general cases, we discuss according to the value of $k$. 
			
		For the case $k\ge 3$, we have 
		\begin{align*}  
			K_X^2&=(nF+G)\pi^*K_X= -n\sum a_{E}EF+G\pi^*K_X\\
			&= -na_{E_0}E_0F+G\pi^*K_X\\
			&\ge -na_{E_0}+\sum_{i=1}^k \ell_i \mathcal{E}_i\pi^*K_X\\
			&= -na_{E_0}+\sum_{i=1}^k\ell_i(-a_{E_0}-a_{E_{\ell_i,1}}-1)\\
			&= -na_{E_0}+\sum_{i=1}^k\ell_i(-a_{E_0}-\frac{a_{E_{i,\ell_i}}}{\ell_i}-1)\\
			&=n\frac{n+\sum_{i=1}^k\ell_i}{n+2+\sum_{i=1}^k\ell_i}-\frac{2\sum_{i=1}^k\ell_i}{n+2+\sum_{i=1}^k\ell_i}-\sum_{i=1}^k a_{E_{i,\ell_i}} \\
			&=n-2+\frac{4}{n+2+\sum_{i=1}^k\ell_i} -\sum_{i=1}^k a_{E_{i,\ell_i}} \\
			&\ge n-2+\frac{4}{n+2+3} +3\times \frac{1}{3} \\
			&> V_{n,1}.
		\end{align*}
		Therefore, the volume $K_X^2$ does not achieve the minimal value in this case.
		
		For the case $k=2$ and $5\le n \le 20$,  we have 
		\begin{align*}  
			K_X^2 
			&\ge n\frac{n+\sum_{i=1}^2\ell_i}{n+2+\sum_{i=1}^2\ell_i}-\frac{2\sum_{i=1}^2\ell_i}{n+2+\sum_{i=1}^2\ell_i}-\sum_{i=1}^2 a_{E_{i,\ell_i}} \\
			&= n-2+\frac{4}{n+2+\ell_1+\ell_2}+\frac{\ell_1}{2\ell_1+1}-\sum_{i=1}^2 \ell_i a_{E_{i,1}}\\
			&\ge n-2+\frac{4}{n+2+\ell_1+\ell_2}+\frac{\ell_1}{2\ell_1+1}+\frac{\ell_2}{2\ell_2+1}\quad \text{(by (\ref{discrofend}))}.
		\end{align*}
		Since 
		\begin{align*}  
			K_X^2-V_{n,1}&\ge n-2+\frac{4}{n+2+\ell_1+\ell_2}+\frac{\ell_1}{2\ell_1+1}+\frac{\ell_2}{2\ell_2+1}-(n  - \frac{3}{2} + \frac{21 - n}{6 \left(n + 3\right)})\\
			&= \frac{4}{n+2+\ell_1+\ell_2}+\frac{\ell_1}{2\ell_1+1}+\frac{\ell_2}{2\ell_2+1}-\frac{4}{n+3}-\frac{1}{3}\\
			&=\frac{\ell_1}{2\ell_1+1}+\frac{\ell_2}{2\ell_2+1}-\frac{1}{3}-\frac{4 \left(\ell_1+ \ell_2 - 1\right)}{\left(n + 3\right) \left( n +\ell_1+ \ell_2 + 2\right)}\\
			&\ge \frac{\ell_1}{2\ell_1+1}+\frac{\ell_2}{2\ell_2+1}-\frac{1}{3}-\frac{4 \left(\ell_1+ \ell_2 - 1\right)}{8 \left( \ell_1+ \ell_2 + 7\right)}  \quad\	\quad (n\ge 5)\\
			&=\frac{ \ell_2^{2} \left(4 \ell_1 - 4\right) + \ell_2 \left(4 \ell_1^{2} + 116 \ell_1 + 15\right) - 4 \ell_1^{2} + 15 \ell_1 - 11}{6 \left(2 \ell_1 + 1\right) \left(2 \ell_2 + 1\right) \left(\ell_1 + \ell_2 + 7\right)}\\
			&\ge \frac{\left(4 \ell_1 - 4\right) + \left(4 \ell_1^{2} + 116 \ell_1 + 15\right) - 4 \ell_1^{2} + 15 \ell_1 - 11}{6 \left(2 \ell_1 + 1\right) \left(2 \ell_2 + 1\right) \left(\ell_1 + \ell_2 + 7\right)}   \quad\quad (\text{by}~\ell_2\ge 1)\\
			&=\frac{135\ell_1 }{6 \left(2 \ell_1 + 1\right) \left(2 \ell_2 + 1\right) \left(\ell_1 + \ell_2 + 7\right)}\\
			&>0,
		\end{align*}
		we have $K_X^2>V_{n,1}$. Therefore, the volume $K_X^2$ does not achieve the minimal value in this case.
		
		For the case $k=2$ and $n=4$, since $\mathcal{E}_i\pi^*K_X=-a_{E_0}-\frac{a_{E_{i,\ell_i}}}{\ell_i}-1=\frac{4+\sum_{i=1}^2\ell_i}{4+2+\sum_{i=1}^2\ell_i}-\frac{a_{E_{i,\ell_i}}}{\ell_i}-1>0$ and $0<-a_{E_{i,\ell_i}}\le 1$, we have $\ell_1<\ell_2+6$ and $\ell_2< \ell_1+6$. We may assume $\ell_2\le \ell_1<\ell_2+6$. 
		\begin{align*}  
			K_X^2-V_{4,1}&=K_X^2-(4  - \frac{3}{2} + \frac{21 - 4}{6 \left(4 + 3\right)})\\
			&\ge \frac{4}{4+2+\ell_1+\ell_2}+\frac{\ell_1}{2\ell_1+1}+\frac{\ell_2}{2\ell_2+1}-\frac{4}{4+3}-\frac{1}{3}\\
			&=\frac{ \ell_2^{2} \left(8\ell_1 - 17\right) + \ell_2 \left(8\ell_1^{2} + 350\ell_1 + 47\right)- 17\ell_1^{2} + 47\ell_1  - 30}{21 \left(2\ell_1 + 1\right) \left(2 \ell_2 + 1\right) \left(\ell_1+ \ell_2 + 6\right)}.
		\end{align*}
		We see  that $ \ell_2^{2} \left(8\ell_1 - 17\right) + \ell_2 \left(8\ell_1^{2} + 350\ell_1 + 47\right)- 17\ell_1^{2} + 47\ell_1  - 30>0$ by direct computation replacing $\ell_1$ with $\ell_2$,  $\ell_2+1$,..., and  $\ell_2+5$. Thus, $K_X^2>V_{4,1}$. Therefore, the volume $K_X^2$ does not achieve the minimal value in this case.
		
		
		For the case $k=1$ and the local configuration is not as Figure \ref{localconfg3}, by \cite[ Lemma 3.7 (ii,iii)]{alexeev89}
		we have $-a_{E_{\ell}}\ge \frac{2\ell}{3\ell+2}$, where '=' is achieved if the configuration is as Figure \ref{localconfg4}.
		Since $\mathcal{E}\pi^*K_X=\frac{n+l}{n+2+\ell}-a_{E_1}-1=\frac{n+l}{n+2+\ell}-\frac{a_{E_{\ell}}}{\ell}-1>0$, we have $\ell<\frac{(n+2)|a_{E_{\ell}}|}{2-|a_{E_{\ell}}|}<n+2$.
		

		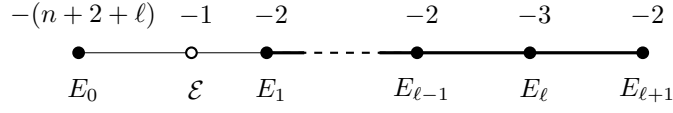
\begin{figure}
			\centering
			\begin{tikzpicture}[thick] 
				\begin{scope}[scale=1.0] 
					\clip(-2,-1) rectangle (8,1);
					\draw[very thin] (-0.5,0) -- (2,0);
					\draw[very thick] (2,0) -- (2.5,0); 
					\draw[dashed,thick] (2,0) -- (4,0);
					\draw[very thick] (3.5,0) -- (7,0);
					\draw[fill=black] (-0.5,0) circle[radius=2pt];
					\draw[fill=white] (1,0) circle[radius=2pt];
					\foreach \x in {2,4,5.5,7}
					\draw[fill=black] (\x,0) circle[radius=2pt];
					\foreach \x in {2,4}
					\node at (\x,0.5) {{ $-2$}};
					\node at (1,0.5) {{ $-1$}};
					\node at (-0.5,0.5) {{ $-(n+2+\ell)$}};
					\node at (5.5,0.5) {{  $-3$}};
					\node at (7,0.5) {{  $-2$}};
					\node at (-0.5,-0.5) {{  $E_0$}};
					\node at (1,-0.5) {{  $\mathcal{E}$}};
					\node at (2,-0.5) {{  $ E_1$}};
					\node at (4,-0.5) {{  $ E_{\ell-1}$}};
					\node at (5.5,-0.5) {{  $ E_{\ell}$}};
					\node at (7,-0.5) {{  $ E_{\ell+1}$}};
				\end{scope}
			\end{tikzpicture} 
			\caption{local configuration around $\mathcal{E}_0$ with $-a_{E_{\ell}}= \frac{2\ell}{3\ell+2}$} 
			\label{localconfg4}
		\end{figure}
		
		Therefore, the volume
		\begin{align*}  
			K_X^2&=n\frac{n+l}{n+2+\ell}+\ell(\frac{n+\ell}{n+2+\ell}-a_{E_1}-1)\\
			&\ge n\frac{n+l}{n+2+\ell}+\ell(\frac{n+\ell}{n+2+\ell}+\frac{2}{3\ell+2}-1) \\
			&=n - \frac{4}{3} + \frac{4 \left(8 \ell - n + 4\right)}{3 \left(3 \ell + 2\right) \left(n + \ell+ 2\right)}.
		\end{align*}
		Denote $W_{n,\ell}\colon=n - \frac{4}{3} + \frac{4 \left(8 \ell - n + 4\right)}{3 \left(3 \ell + 2\right) \left(n + \ell+ 2\right)}$. 
		The sign of the difference $$\Delta W_{n,\ell}\colon =W_{n,\ell}-W_{n,\ell+1}= \frac{4 \left(8 \ell^{2} + \ell \left(16 - 2 n\right) - n^{2} - 5 n + 4\right)}{\left(3 \ell + 2\right) \left(3 \ell + 5\right) \left( n +\ell+ 2\right) \left(n +\ell + 3\right)}$$
		changes at most one time as $\ell$ increases from $\ell=1$ to $+\infty$.
		Therefore, $\mathrm{min}\{W_{n,\ell}|\ell <n+2\}=\mathrm{min}\{W_{n,1},W_{n,n+1}\}$. 
		Note that when $n\ge 4$, we have 
		\begin{align*}
			W_{n,n+1}-V_{n,1}&=\frac{6 n^{3} - 7 n^{2} - 24 n + 9}{3 \left(n + 3\right) \left(2 n + 3\right) \left(3 n + 5\right)}\\
			&=\frac{ (2n- 7) n^{2} +(4n^2- 24) n + 9}{3 \left(n + 3\right) \left(2 n + 3\right) \left(3 n + 5\right)}>0,
		\end{align*}
		and $W_{n,1}>V_{n,1}$. Therefore, we have $\mathrm{min}\{W_{n,\ell}|\ell <n+2\}> V_{n,1}$. The volume $K_X^2$ does not achieve the minimal value in this case.

	\end{itemize}
	
	In conclusion, when $K_X^2>\frac{p_g-1}{p_g+1}(p_g-1)$, 
	we have $K_X^2 \ge \frac{2p_g-2}{2p_g+1}(p_g-1)$ if $p_g\le 4$ and $K_X^2 \ge \frac{(3p_g-2)p_g-4}{3(p_g+2)}$ if $p_g\ge 5$. 
	Moreover, the inequalities are sharp. 
	
	We summarize the above results in the following theorem:
	
	\begin{theorem}\label{main}
		Let $X$ be a normal KSBA stable surface with $|K_X|$ composed with a pencil. Then either $K_X^2\ge 2p_g-2$, or $X$ falls in  one of the following case:
		\begin{itemize}
			
			\item[(A)]  $X$ is birational to a surface $\widetilde{X}$ which admits a genus 2 fibration over $\mathbb{P}^1$ with a section. Moreover, $p_g=2$ and $K_X^2\ge 1$. If '=' holds, $K_X$ is Cartier and $X$ is canonically  embedded as a hypersurface of degree 10 in the smooth locus of $\mathbb{P}(1,1,2,5)$. If '$>$' holds, $K_X^2\ge 1+\frac{1}{3}$ and the inequality is sharp; 
			\item[(B)] $X$ is birational to a Jacobian surface over an elliptic curve. Moreover,  $K_X^2\ge p_g$. If '=' holds, $K_X$ is Cartier and 
			$X$ is obtained from a Jacobian surface $\widetilde X$ (possible singular) over an elliptic curve with every fiber irreducible  by contracting the zero sections.
			If '$>$' holds, $K_X^2\ge p_g+\frac{1}{3}$ and the inequality is sharp;
			\item[(C)] $X$ is birational to an elliptic surface over $\mathbb{P}^1$. Moreover, $K_X^2\ge \frac{p_g-1}{p_g+1}(p_g-1)$. If '=' holds, 
			$X$ is obtained from a Jacobian surface  (possible singular) over $\mathbb{P}^1$ 
			by contracting the zero section and curves in fibers disjoint from the zero section.
			If '$>$' holds, 
			$K_X^2 \ge \mathrm{min}\{\frac{2p_g-2}{2p_g+1}(p_g-1), \frac{(3p_g-2)p_g-4}{3(p_g+2)}\}$, and the inequality is sharp.
			
		\end{itemize}
	\end{theorem}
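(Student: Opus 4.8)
The plan is to run the case analysis of this section in a single framework built on the Stein factorization $\phi_{|L|} = h \circ \varphi$. Writing $\epsilon^*\pi^*K_X = nF + G$ with $L$ algebraically equivalent to $nF$ and $G$ the fixed part, Lemma~\ref{firstineq} supplies the fundamental inequality
\begin{equation*}
K_X^2 = (nF + G)\,\epsilon^*\pi^*K_X \;\ge\; nF\,\epsilon^*\pi^*K_X \;=\; n\Big(FG - \textstyle\sum a_E\,\epsilon^*E\cdot F\Big).
\end{equation*}
The whole argument branches on the fiber degree $F\,\epsilon^*\pi^*K_X$. If it is at least $2$, then since the nondegeneracy of $B$ in $\mathbb{P}^{p_g-1}$ forces $n \ge p_g - 1$, one gets $K_X^2 \ge 2n \ge 2p_g - 2$, the first alternative. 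Hence I may assume $F\,\epsilon^*\pi^*K_X < 2$; as $FG$ is a non-negative integer and $-\sum a_E\,\epsilon^*E\cdot F \ge 0$, this forces $FG \in \{0,1\}$.

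Next I would split into the sub-cases (A)--(D) according to $FG$ and the type of $\epsilon$, using $p_a(F) = 1 + \tfrac12 F K_{\bar X}$ to identify the fibration. When $FG = 1$ and $\epsilon$ blows up a smooth point, $F$ has arithmetic genus $2$, giving a genus-$2$ pencil over $\mathbb{P}^1$ with a section and $p_g = 2$ (case A); when $FG = 1$ and $\epsilon$ resolves an elliptic Gorenstein singularity, $F$ is of genus $1$ and one obtains a Jacobian elliptic fibration over an elliptic base with $n = p_g$ (case B); when $FG = 0$ one gets an elliptic fibration over $\mathbb{P}^1$ with $n = p_g - 1$ (case C). The leftover possibility $FG = 1$ with $\epsilon = \mathrm{id}$ or an $ADE$-resolution is ruled out because it yields the non-integral value $p_a(F) = 1 + \tfrac12$ (case D).

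For cases (A) and (B) the routine is uniform: resolve the $ADE$-part of $G_0$, apply Lemma~\ref{blowdown} to write the remaining fixed part as pullbacks of $(-1)$-curves, and read off $K_X^2 \ge 1$ (resp. $\ge p_g$) from the fundamental inequality, with the equality cases giving $K_X$ Cartier and the stated hypersurface-in-$\mathbb{P}(1,1,2,5)$ (resp. Jacobian) description. The sharper bounds $1 + \tfrac13$ and $p_g + \tfrac13$ under strict inequality come from Remark~\ref{discrep}: a $\pi$-exceptional curve meeting $F$ nontrivially has self-intersection $\le -3$, so its discrepancy is $\ge \tfrac13$; the explicit double-cover constructions of Examples~\ref{6.1a} and \ref{6.1b} show these are attained.

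The main obstacle is the strict-inequality half of case (C). The horizontal exceptional curve $E_0$ satisfies $-E_0^2 \ge n + 2$, hence $-a_{E_0} \ge \tfrac{n}{n+2}$, yielding the minimal volume $\tfrac{p_g-1}{p_g+1}(p_g-1)$ via the chain (\ref{ineq1})--(\ref{ineq4}); the equality analysis gives the Jacobian-over-$\mathbb{P}^1$ structure. To locate the \emph{next} value I would track which of (\ref{ineq2}), (\ref{ineq3}), (\ref{ineq4}) can be strict, which forces a full study of the local configuration of exceptional curves around $E_0$ (Figures~\ref{localconfg3} and \ref{localconfg4}). Feeding in the sharp discrepancy estimates of \cite[Lemma 3.7]{alexeev89} reduces everything to minimizing the explicit rational functions $V_{n,\ell}$ and $W_{n,\ell}$ over the admissible ranges of chain-lengths $\ell_i$ and branch number $k$. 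The delicate point is the non-monotonicity of $V_{n,\ell}$ in $\ell$ (it increases, then decreases to the limit $n - \tfrac32$), so the minimum has to be pinned down by comparing boundary values $V_{n,1}$, $W_{n,1}$, $W_{n,n+1}$ and the limit, treated separately in the ranges $4 \le n \le 20$ and $n \ge 21$, while checking that $k \ge 2$ never improves on $k = 1$. The resulting bound $K_X^2 \ge \min\{\tfrac{2p_g-2}{2p_g+1}(p_g-1), \tfrac{(3p_g-2)p_g-4}{3(p_g+2)}\}$ is sharp by Examples~\ref{6.1c1} and \ref{6.1c2}.
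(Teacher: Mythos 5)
Your outline reproduces the paper's own strategy step for step: the same Stein-factorization setup, the same split on $F\epsilon^*\pi^*K_X$ and on $FG\in\{0,1\}$, the same four subcases (A)--(D) with (D) excluded by the half-integral arithmetic genus, the same use of Lemma~\ref{blowdown} to convert the fixed part into pullbacks of $(-1)$-curves, and the same reduction of the strict-inequality half of case (C) to minimizing $V_{n,\ell}$ and $W_{n,\ell}$ over the local configurations around $E_0$, compared in the ranges $4\le n\le 20$ and $n\ge 21$. For most of the theorem this is the paper's proof in compressed form.

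There is, however, one concrete misstep: your mechanism for the bound $K_X^2\ge p_g+\frac13$ in case (B). You attribute it, as in case (A), to a $\pi$-exceptional curve meeting $F$ nontrivially, whose self-intersection is then $\le -3$. But in case (B) the fibration has an elliptic base, so every $\pi$-exceptional curve (a rational curve coming from a log terminal, non-canonical point of $X$) is vertical; the paper explicitly records $F\epsilon^*E=0$ for all such $E$, so the term $-\sum a_E\widehat{E}\widehat{F}$ contributes nothing and the horizontal curve your plan relies on does not exist. The strict inequality $K_X^2>p_g$ instead forces $\widehat{G}_0\theta^*K_X>0$, and the extra $\frac13$ is extracted by first proving, via the contractibility/negative-semidefiniteness contradiction, that $\mathrm{Supp}(\widehat{G}_0)$ contains a $(-1)$-curve $\mathcal{E}_0$ with $\mathcal{E}_0\widehat{Z}=1$, then following the chain $\mathcal{E}_k=\mathcal{E}_0+\widehat{E}_1+\dots+\widehat{E}_k$ attached to the section, with $a_{E_k}=ka_{E_1}$, until it terminates in a curve of self-intersection $-d\le -3$, and only then invoking Remark~\ref{discrep} to get $K_X^2\ge p_g-a_{E_{\ell+1}}\ge p_g+\frac{d-2}{d}\ge p_g+\frac13$. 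This chain-through-the-section argument is an additional idea your case (B) plan would need to supply; the rest of the proposal matches the paper's argument.
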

	
	\begin{example}\label{6.1a}
		Let $\Sigma_2$ be a Hirzebruch surface, $\Delta_0$ be the  zero section with $\Delta_0^2=-2$ and $\Gamma$ be a fiber of the ruling.  We choose a branched curve $B=\Delta_0+B_1+B_2$, where $B_1\sim \Delta_0+2\Gamma$ and $B_2\sim 4(\Delta_0+2\Gamma)$ are two reduced irreducible smooth curves. We may assume $B_1$ intersects $B_2$ with 8 different points. Let $Y$ be the surface which is a double covering surface over $\Sigma_2$ with branched curve $B$. $Y$ has 8 $A_1$ singularities lying over $B_1\cap B_2$. Let $\widetilde{Y}$ be the smooth model of $Y$ and $E_0,E_1$ be the strict transformations of $\Delta_0, B_1$. We see that $E_0^2=-1$ and $E_1^2=-3$. 
		Contracting $E_0$, $E_1$, we obtain a KSBA stable surface $X$ with $K_X^2=1+\frac{1}{3}$ and $p_g=2$, which is an example of Theorem \ref{main} (A). 
		
	\end{example}

	\begin{example}\label{6.1b}
		Let $Y$ be a smooth minimal Jacobian surface over an elliptic curve with $Z$ a zero section and $K_Y\equiv nF$. Assume further $Y$ has  reducible fibers $F_0=\sum_{i=0}^{m_0} C_i$ and $F_{j}=\sum_{i=0}^{m_j} C_i^{(j)}, j=1,...,\ell$. We may assume $C_0Z= C_0^{j}Z=1$, and  $C_iZ= C_i^{(j)}Z=0$, for $i=1,...,m_j$ and $j=1,...,\ell$. Let $\epsilon\colon \widetilde{Y}\to Y$ be a blowup at $C_0\cap Z$ with $\mathcal{E}$ be the associated (-1) curve. Let $\overline{C_0}, \overline{Z}$ be the strict transformations of $C_0, Z$. See Figure \ref{fig:ex2} for the configuration of curves on $Y$.

		Denote by $L$ the divisor $K_{\widetilde{Y}}+\overline{Z}+\frac{1}{3}\overline{C_0}$.
		Note that $L\equiv n\epsilon^*F+\mathcal{E}+\overline{Z}+\frac{1}{3}\overline{C_0}\ge 0$. Moreover, 
		$L\mathcal{E}=\frac{1}{3}$, $L\overline{Z}=L\overline{C_0}=0$ and $L^2=L(n\epsilon^*F)+L\mathcal{E}=n+\frac{1}{3}>0$. Therefore, $L$	
		is nef and big.  
		Let $R:=\mathop{\oplus}\limits_{i=0}^{\infty}H^{0}(\tilde{Y},\lfloor i( K_{\widetilde{Y}}+\overline{Z}+\frac{1}{3}\overline{C_0})\rfloor)$, which is a finitely generated graded ring by \cite[Thm 1.2]{BCHM10}. Define $X\colon =\mathrm{Proj}~ R$, which is a normal projective surface. There exists a natural birational morphism 
		$\pi\colon \tilde{Y} \to X$. 
		Note that $\pi_*(\overline{Z}+\frac{1}{3}\overline{C_0})=0$.	
		In fact, $(X,0)$ is the (log) canonical model of the pair $(\widetilde{Y}, \overline{Z}+\frac{1}{3}\overline{C_0})$. We refer to \cite{AD17} \cite{SMMP} for log canonical models.

		Assume $D$ is a reduced irreducible curve  with $LD=0$ and $D\not= \overline{Z},\overline{C_0}$. Then $D\epsilon^*F=D\mathcal{E}=D\overline{Z}=D \overline{C_0}=0$, which means $D$ is a (-2) curve contained in a reducible fiber disjoint with $\overline{Z}$ and $\overline{C_0}$.
		Hence $\pi$ contracts $\overline{Z},\overline{C_0}$ and all (-2) curves $D$ contained in reducible fibers disjoint with $\overline{Z}$ and $\overline{C_0}$ into a simple elliptic singularity, a $\frac{1}{3}(1,1)$ singularity and $ADE$ singularities on $X$. 
		
		Therefore, $X$ is a KSBA stable surface $X$ with $K_X^2=p_g+\frac{1}{3}$, which is an example of Theorem \ref{main} (B). 
		\begin{center}
			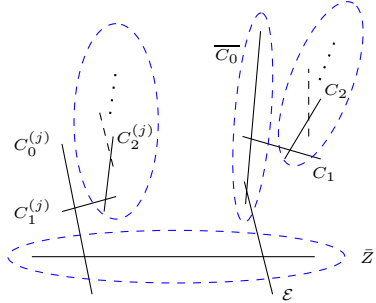
\begin{figure}[H]\label{fig:ex2}
			\begin{tikzpicture}[font=\tiny]
				
				\begin{scope}[xshift=0cm,xscale=0.8]
					
					\draw (2,0)--(6.7,0); 
					\draw (6,-0.5)--(5.53,1); 
					\draw (3,-0.5)--(2.5,1.5); 
					\draw (5.55,0.7)--(5.8, 3); 
					\draw (5.5,1.6)--(6.8,1.3);
					\draw (6.2,1.3)--(6.8,2.1); 
					\draw[dashed] (6.6,1.6)--(6.6,2.5); 
					\draw[loosely dotted,thick] (6.78,2.35)--(7.05,2.9); 
					\draw (2.5, 0.6)--(3.4, 0.8);  
					\draw (3.2, 0.6)--(3.35, 1.6);  
					\draw[dashed] (3.35, 1.2)--(3.1, 2.0);  
					\draw[loosely dotted, thick] (3.3, 1.9)--(3.4, 2.5);  
					
					\coordinate (O1) at (6.8,2.3);  
					\coordinate (O2) at (3.4, 1.8); 
					\coordinate (O3) at (4.4, 0); 	
					\coordinate (O4) at (5.67, 1.85);
					
					\draw[dashed,blue, rotate around={63:(O1)}] (O1) ellipse (1.2 and 0.5);
					\draw[dashed,blue, rotate around={90:(O2)}] (O2) ellipse (1.3 and 0.7);
					\draw[dashed,blue] (O3) ellipse (2.8 and 0.35);
					\draw[dashed,blue, rotate around={84:(O4)}] (O4) ellipse (1.4 and 0.3);
					\node[right] at (7.3,0) {$\bar{Z}$};
					\node[right] at (6,-0.5) {$\mathcal{E}$};
					\node[left] at (5.6,2.7) {$\overline{C_0}$};
					\node[right] at (6.5,1.1){$C_1$};
					\node[right] at (6.7,2.2){$C_2$};
					\node[left] at (2.5,1.5) {$C^{(j)}_0$};
					\node[left] at (2.5, 0.6) {$C^{(j)}_1$};
					\node[right] at (3.25, 1.6){$C^{(j)}_2$};
				\end{scope}
			\end{tikzpicture}
		\caption{configuration of curves on $Y$ for Example \ref{6.1b}}
		\end{figure}
		\end{center}
		
	\end{example}

	\begin{example}\label{6.1c0}
		Let $Y$ be a smooth minimal Jacobian surface over $\mathbb{P}^1$ with $E_0$ a zero section and $K_Y\equiv nF$. 
		Let $X\colon = \mathrm{Proj} \mathop \oplus\limits_{i=0}^{\infty} H^0(Y, \lfloor i( K_Y+ \frac{n}{n+2}E_0)\rfloor)$. 
		Then $X$ is a KSBA stable surface $X$ with $K_X^2=\frac{p_g-1}{p_g+1}(p_g-1)$, which is an example of Theorem \ref{main} (C). In fact, $X$ is obtained from $\widetilde{Y}$ by contracting $E_0$ and all (-2) curves $D$ contained in reducible fibers disjoint with $E_0$. 
		
		We remark that this surface has occurred in  \cite[Example 1.8]{TZ92}. 
		
	\end{example}
	
	\begin{example}\label{6.1c1}
		Let $Y$ be a smooth minimal Jacobian surface over $\mathbb{P}^1$ with $E_0$ a zero section and $K_Y\equiv nF$. Assume further $Y$ has a reducible fiber $F_0=\sum_{i=0}^{m} C_i$. We may assume $C_0Z=1$ and $C_iZ=0$, $i=1,...,m$. 
		Let $X\colon = \mathrm{Proj} \mathop \oplus\limits_{i=0}^{\infty} H^0(Y, \lfloor i( K_Y+ \frac{2n}{2n+3}E_0+\frac{n}{2n+3}C_0)\rfloor)$. 
		Then $X$ is a KSBA stable surface $X$ with $K_X^2=\frac{2p_g-2}{2p_g+1}(p_g-1)$, which is an example of Theorem \ref{main} (C). In fact, $X$ is obtained from $\widetilde{Y}$ by contracting $E_0, C_0$ and all (-2) curves $D$ contained in reducible fibers disjoint with $E_0, C_0$. 
		
			\begin{center}
			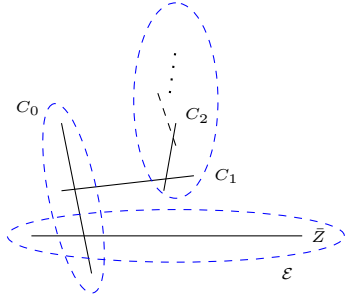
\begin{figure}[H]\label{fig:ex2}
				\begin{tikzpicture}[font=\tiny]
					
					\begin{scope}[xshift=0cm,xscale=0.8]
						
						\draw (2,0)--(6.5,0); 
						\draw (3,-0.5)--(2.5,1.5); 
						\draw (2.5, 0.6)--(3.4+1.3, 0.8);  
						\draw (3.2+1, 0.6)--(3.4+1, 1.5);  
						\draw[dashed] (3.4+1, 1.2)--(3.1+1, 1.9);  
						\draw[loosely dotted, thick] (3.3+1, 1.9)--(3.4+1, 2.5);  
						\coordinate (O0) at (2.7,0.5);  
						\coordinate (O2) at (3.4+1, 1.8); 
						\coordinate (O3) at (4.4, 0); 	
						
						\draw[dashed,blue, rotate around={105:(O0)}] (O0) ellipse (1.3 and 0.4);
						\draw[dashed,blue, rotate around={90:(O2)}] (O2) ellipse (1.3 and 0.7);
						\draw[dashed,blue] (O3) ellipse (2.8 and 0.35);
						\node[right] at (6.5,0) {$\bar{Z}$};
						\node[right] at (6,-0.5) {$\mathcal{E}$};
						\node[left] at (2.3,1.7) {$C_0$};
						\node[left] at (3.4+2.2, 0.8) {$C_1$};
						\node[right] at (3.4+1, 1.6){$C_2$};
					\end{scope}
				\end{tikzpicture}
				\caption{configuration of curves on $Y$ for Example \ref{6.1c1}}
			\end{figure}
		\end{center}
	
	\end{example}
	
	\begin{example}\label{6.1c2}
		Let $Y$ be a smooth minimal Jacobian surface over $\mathbb{P}^1$ with $E_0$ a zero section and $K_Y\equiv nF$ ($n\ge 4$). Assume further $Y$ has a reducible fiber $F_0=\sum_{i=0}^{m} C_i$. We may assume $C_0Z=1$ and $C_iZ=0$, $i=1,...,m$. 
		Let $\epsilon\colon \widetilde{Y}\to Y$ be a blowup at $C_0\cap E_0$ and $\mathcal{E}$ be the (-1) curve. Let $\overline{C_0}, \overline{E_0}$ be the strict transformations of $C_0, E_0$. 
		Let $X\colon = \mathrm{Proj} \mathop \oplus\limits_{i=0}^{\infty} H^0(\tilde{Y}, \lfloor i( K_{\tilde{Y}}+\frac{n+1}{n+3}\overline{E_0}+\frac{1}{3}\overline{C_0})\rfloor)$.
		Then $X$ is a KSBA stable surface $X$ with $K_X^2=\frac{(3p_g-2)p_g-4}{3(p_g+2)}$, which is an example of Theorem \ref{main} (C). In fact, $X$ is obtained from $\widetilde{Y}$ by contracting $\overline{C_0}, \overline{E_0}$ and all (-2) curves $D$ contained in reducible fibers disjoint with $\overline{E_0},\overline{C_0}$. 
		
		\begin{center}
			\begin{figure}[H]\label{fig:ex2}
				\begin{tikzpicture}[font=\tiny]
					
					\begin{scope}[xshift=0cm,xscale=0.8]
						
						\draw (2,0)--(6.7,0); 
						\draw (6,-0.5)--(5.53,1); 
						\draw (5.55,0.7)--(5.8, 3); 
						\draw (5.5,1.6)--(6.8,1.3);
						\draw (6.2,1.3)--(6.8,2.1); 
						\draw[dashed] (6.6,1.6)--(6.6,2.5); 
						\draw[loosely dotted,thick] (6.78,2.35)--(7.05,2.9); 
						
						\coordinate (O1) at (6.8,2.3);  
						\coordinate (O3) at (4.4, 0); 	
						\coordinate (O4) at (5.67, 1.85); 
						
						\draw[dashed,blue, rotate around={63:(O1)}] (O1) ellipse (1.2 and 0.5);
						\draw[dashed,blue] (O3) ellipse (2.8 and 0.35);
						\draw[dashed,blue, rotate around={84:(O4)}] (O4) ellipse (1.4 and 0.3);
						\node[right] at (7.3,0) {$\overline{E_0}$};
						\node[right] at (6,-0.5) {$\mathcal{E}$};
						\node[left] at (5.6,2.7) {$\overline{C_0}$};
						\node[right] at (6.5,1.1){$C_1$};
						\node[right] at (6.7,2.2){$C_2$};
					\end{scope}
				\end{tikzpicture}
				\caption{configuration of curves on $\tilde{Y}$ for Example \ref{6.1c2}}
			\end{figure}
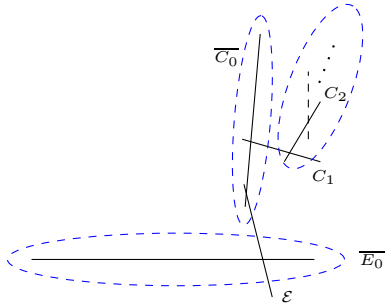
		\end{center}
	\end{example}
	
	{\bf Acknowledgements: }{\rm We thank the referees for their time and
		comments. }

	\bibliographystyle{alpha}

\begin{thebibliography}{plain}
		
		\bibitem[Ale89]{alexeev89}
		Valery Alexeev.
		\newblock Fractional indices of log Del Pezzo surfaces.
		\newblock Math. USSR Izv. 8 (1989), 613-629.
		
		\bibitem[Ale96]{alexeev96a}
		Valery Alexeev.
		\newblock Moduli spaces {$M\sb{g,n}(W)$} for surfaces.
		\newblock In {\em Higher-dimensional complex varieties (Trento, 1994)}, pages
		1--22. de Gruyter, August 1996.
		
		\bibitem[Ale94]{alexeev94}
		Valery Alexeev.
		\newblock Boundedness and $K^2$ for log surfaces
		\newblock International Journal of Mathematics 1994 05:06, 779-810.
		
		\bibitem[Ale06]{alexeev06}
		Valery Alexeev.
		\newblock Higher-dimensional analogues of stable curves.
		\newblock In {\em International {C}ongress of {M}athematicians. {V}ol. {II}},
		pages 515--536. Eur. Math. Soc., Z{\"u}rich, 2006.
		
		\bibitem[AP13]{AP13}
		Alexeev, V.,  Pardini, R. 
		\newblock \emph{On the existence of ramified abelian covers} (arXiv:1210.6174). arXiv. https://doi.org/10.48550/arXiv.1210.6174
		
		
		\bibitem[AL19]{AL19}
		V. Alexeev and W. Liu. 
		\newblock Open surfaces of small volume. 
		\newblock Algebraic Geometry 6 (2019), 312–327.
		
		
		\bibitem[Ant19]{ant19}
		Anthes, B. 
		\newblock \emph{Gorenstein stable surfaces with $K_X^2 = 2$ and $\chi(\mathcal O_X) = 4$} (arXiv:1904.12307). arXiv. https://doi.org/10.48550/arXiv.1904.12307
		
		\bibitem[AD17]{AD17}
		Ascher, Kenneth, and Dori Bejleri. 
		\newblock \emph{ Log Canonical Models of Elliptic Surfaces.}
		\newblock Advances in Mathematics, vol. 320, Nov. 2017, pp. 210–43. DOI.org (Crossref), https://doi.org/10.1016/j.aim.2017.08.035.
		
		
		\bibitem[BCHM10]{BCHM10}
		Caucher Birkar, Paolo Cascini, Christopher D. Hacon, and James McKernan.
		\newblock \emph{Existence of minimal models for varieties of log general type.}
		\newblock Journal of the American Mathematical Society, 23(2), 405–468. https://doi.org/10.1090/S0894-0347-09-00649-3
		
		\bibitem[BHPV]{BHPV}
		Barth, Wolf, et al.
		\newblock \emph{Compact complex surfaces.} Vol. 4. Springer, 2015.
		
		\bibitem[Bla95]{Bla95}
		Blache R.
		\newblock Riemann-Roch theorem for normal surfaces and applications[J].
		\newblock Abhandlungen Aus Dem Mathematischen Seminar Der Universit\"at Hamburg, 1995, 65(1):307-340.
		
		\bibitem[Bom73]{Bom73}
		Bombieri, E.
		\newblock Canonical models of surfaces of general type.
		\newblock Publications Math{\'e}matiques de l'Institut des Hautes {\'E}tudes Scientifiques (1973) 42: 171.
		
		\bibitem[FM94]{F-M94}
		Friedman R, Morgan J W.
		\newblock \emph{Smooth four-manifolds and complex surfaces.}
		\newblock Ergebnisse Der Mathematik Und Ihrer Grenzgebiete, 1994, 27.
		
		
		
		\bibitem[FPR15a]{FPR15a}
		Franciosi, Marco, R. Pardini, and S. Rollenske.
		\newblock  Gorenstein stable surfaces with $K_X^2 = 1$ and $p_g>0$.
		\newblock  Mathematics (2015).
		
		\bibitem[FPR15b]{FPR15b}
		M.Franciosi,  R. Pardini, S. Rollenske.
		\newblock Log-canonical pairs and Gorenstein stable surfaces with $K_X^2=1$. \newblock Mathematics 151.8(2015):1529-1542.
		
		\bibitem[Gab02]{GAB02}
		La Nave, Gabriele.
		\newblock  Explicit stable models of elliptic surfaces with sections.
		\newblock  2002, arXiv:math/0205035.
		
		
		\bibitem[Hor76a]{Hor76a}
		Horikawa E.
		\newblock  Algebraic surfaces of general type with small $c^2_1$. I,
		\newblock  Ann. of Math. 104(1976), 357-387. 35(1-2):391-406.
		
		\bibitem[Hor76a]{Hor76b}
		Horikawa E.
		\newblock Algebraic surfaces of general type with small $c^2_1$. II,
		\newblock  Invent. Math. 37(1976):121-155.
		
		\bibitem[Hor78]{Hor78}
		Horikawa, Eiji.
		\newblock Algebraic surfaces of general type with small $c^2_1$. IV.
		\newblock Inventiones mathematicae  50.2(1978):103-128.
		
		\bibitem[Kol12]{kollar12}
		Jan\'os Koll\'ar.
		\newblock Moduli of varieties of general type.
		\newblock In G.~Farkas and I.~Morrison, editors, {\em Handbook of Moduli:
			Volume II}, volume~24 of {\em Advanced Lectures in Mathematics}, pages
		131--158. International Press, 2012.
		
		\bibitem[Kol13]{SMMP}
		J{\'a}nos Koll{\'a}r.
		\newblock {\em Singularities of the minimal model program}, volume 200 of {\em
			Cambridge Tracts in Mathematics}.
		\newblock Cambridge University Press, Cambridge, 2013.
		\newblock With a collaboration of S{\'a}ndor Kov{\'a}cs.
		
		\bibitem[Kol14]{kollarModuli}
		J{\'a}nos Koll{\'a}r.
		\newblock {\em Moduli of varieties of general type}.  
		\newblock 2014.
		\newblock book in preparation.
		
		\bibitem[KM98]{Kollar-Mori}
		J{\'a}nos Koll{\'a}r and Shigefumi {M}ori.
		\newblock {\em Birational geometry of algebraic varieties}, volume 134 of {\em
			Cambridge Tracts in Mathematics}.
		\newblock Cambridge University Press, Cambridge, 1998.
		\newblock With the collaboration of C. H. Clemens and A. Corti, Translated from
		the 1998 Japanese original.
		
		\bibitem[KSB88]{KSB88}
		J{\'a}nos Koll{\'a}r, and Nicholas I. Shepherd-Barron.
		\newblock \emph{Threefolds and deformations of surface singularities.}
		\newblock Inventiones mathematicae 91.2 (1988): 299-338.
		
		\bibitem[KSC04]{KSC04}
		J{\'a}nos Koll{\'a}r, K. E. Smith, and A. Corti.
		\newblock Rational and nearly rational varieties.
		\newblock Cambridge University Press Cambridge (2004):vi,235.
		
		\bibitem[Lau77]{laufer77}
		Henry~B. Laufer.
		\newblock On minimally elliptic singularities.
		\newblock {\em Amer. J. Math.}, 99(6):1257--1295, 1977.
		
		\bibitem[Liu22]{Liu22}
		Liu, W.
		\newblock \emph{The minimal volume of log surfaces of general type with positive geometric genus} (arXiv:1706.03716). arXiv. https://doi.org/10.48550/arXiv.1706.03716
		
		\bibitem[LR12]{LR12}
		Wenfei Liu, S. Rollenske. (2012).
		\newblock   Two-dimensional semi-log-canonical hypersurfaces.
		\newblock  Le Matematiche 67.2(2012):185-202.
		
		\bibitem[LR13]{LS13}
		Liu, Wenfei, and S\"{o}nke Rollenske.
		\newblock \emph{Geography of Gorenstein stable log surfaces.}
		\newblock  Transactions of the American Mathematical Society 368.4(2016).
		
		\bibitem[Nag60]{Nag60}
		Nagata, M.
		\newblock \emph{On rational surfaces I}.
		\newblock Memoirs of the College of Science University of Kyoto, 33,(1960):351-370.
		
		\bibitem[Pin74]{pink74}
		Pinkham, H.
		\newblock \emph{ Deformations of algebraic varieties with $G_m$ action}. 
		\newblock Paris: Soci{\'e}t{\'e} math{\'e}matique de France. Ast{\'e}risque, no. 20 (1974), 140 p.   
		
		
		\bibitem[Rei97]{Reid97}
		Reid, Miles.
		\newblock \emph{Chapters on algebraic surfaces.}
		\newblock Complex algebraic geometry, IAS/Park City Mathematics Series. 1997.
		
		\bibitem[RR22]{RR22}
		Rana, J.,  Rollenske, S. 
		\newblock \emph{Standard stable Horikawa surfaces} (arXiv:2211.12059). (2022).  arXiv. https://doi.org/10.48550/arXiv.2211.12059
		
		\bibitem[TZ92]{TZ92}
		Tsunoda Shuichiro, and D. Q. Zhang.
		\newblock Noether's inequality for non-complete algebraic surfaces of general type.
		\newblock Publications of the Research Institute for Mathematical Sciences 28.1(1992):21-38.
		
		\bibitem[Fuj75]{Fuj75}
		T. Fujita.
		\newblock {\em On the structure of polarized varieties with $\Delta$-genera zero}.
		\newblock J. Fac. Sci. Univ. of Tokyo 22(1975):103-115.
		
		
		%
		
		
	\end{thebibliography}

\end{document}